\author{J\"{o}rg Sch\"{u}rmann and Shoji Yokura}
\address
{J\"{o}rg Sch\"{u}rmann:
Westf. Wilhelms-Universit\"{a}t,
Mathematisches Institut, Einsteinstrasse 62,
48149 M\"{u}nster, Germany }
\email {jschuerm@math.uni-muenster.de}
\address
{Department of Mathematics and Computer Science, 
Faculty of Science, 
Kagoshima University, 21-35 Korimoto 1-chome, Kagoshima 890-0065, Japan}
\email {yokura@sci.kagoshima-u.ac.jp}
\title
{Motivic bivariant characteristic classes}
\thanks {
\\
(1) J\"{o}rg Sch\"{u}rmann supported by the SFB 878 ``groups, geometry and actions''.\\
(2) Shoji Yokura partially supported by JSPS KAKENHI Grant Number 24540085}
\keywords{}
\dedicatory{Dedicated to William Fulton, Robert MacPherson\\
and to the memory of Daniel Quillen} 
\begin{document} 
\newtheorem{thm}{Theorem}[section]
\newtheorem{pro}[thm]{Proposition}
\newtheorem{prob}[thm]{Problem}
\newtheorem{cor}[thm]{Corollary}
\newtheorem{con}[thm]{Conjecture}
\newtheorem{lem}[thm]{Lemma}
\theoremstyle{definition}
\newtheorem{ex}[thm]{Example}
\newtheorem{defn}[thm]{Definition}
\newtheorem{rem}[thm]{Remark}
\renewcommand{\rmdefault}{ptm}
\def\alp{\alpha}
\def\be{\beta}
\def\jeden{1\hskip-3.5pt1}
\def\om{\omega}
\def\bigstar{\mathbf{\star}}
\def\ep{\epsilon}
\def\vep{\varepsilon}
\def\Om{\Omega}
\def\la{\lambda}
\def\La{\Lambda}
\def\si{\sigma}
\def\Si{\Sigma}
\def\Cal{\mathcal}
\def\m {\mathcal}
\def\ga{\gamma}
\def\Ga{\Gamma}
\def\de{\delta}
\def\De{\Delta}
\def\bF{\mathbb{F}}
\def\bH{\mathbb H}
\def\bPH{\mathbb {PH}}
\def \bB{\mathbb B}
\def \bA{\mathbb A}
\def \bOB{\mathbb {OB}}
\def \bM{\mathbb M}
\def \bOM{\mathbb {OM}}
\def \calB{\mathcal B}
\def \bK{\mathbb K}
\def \bG{\mathbf G}
\def \bL{\mathbf L}
\def\bN{\mathbb N}
\def\bR{\mathbb R}
\def\bP{\mathbb P}
\def\bZ{\mathbb Z}
\def\bC{\mathbb  C}
\def \bQ{\mathbb Q}
\def\op{\operatorname}

\begin{abstract} Let $K_0(\m V/X)$ be the relative Grothendieck group of varieties over
$X\in Obj(\m V)$, with $\m V=\m V^{(qp)}_k$ (resp. $\m V=\m V^{an}_c$) the category of
(quasi-projective) algebraic (resp. compact complex analytic) varieties over a base field $k$.
Then we constructed  the motivic Hirzebruch class transformation ${T_y}_*: K_0(\m V /X) \to H_*(X) \otimes \bQ[y]$ in the algebraic context for $k$ of characteristic zero, with 
$H_*(X)=CH_*(X)$ (resp. in the complex algebraic or analytic context, with
$H_*(X)=H^{BM}_{2*}(X)$).
It ``unifies" the well-known three characteristic class transformations of singular varieties: MacPherson's Chern class, Baum--Fulton--MacPherson's Todd class  and the $L$-class of Goresky--MacPherson and Cappell--Shaneson. In this paper we construct a bivariant relative Grothendieck group $\bK_0(\m V/X \to Y)$ for $\m V=\m V^{(qp)}_k$ (resp., $\m V^{an}_c$) so that
$\bK_0(\m V/X \to pt)=K_0(\m V/X)$ in the algebraic context with
$k$ of characteristic zero (resp., complex analytic context).

We also construct in the algebraic context (in any characteristic) two Grothendieck transformations $mC_y=\La_y^{mot}: \bK_0(\m V^{qp}/X \to Y) \to \bK_{alg}(X \to Y)\otimes \bZ[y]$ and
$T_y: \bK_0(\m V^{qp}/X \to Y) \to \bH(X \to Y) \otimes \bQ[y]$
with $\bK_{alg}(f)$ the bivariant algebraic $K$-theory of $f$-perfect complexes and $\bH$
the bivariant operational Chow groups (or the even degree bivariant homology in case
$k=\bC$). Evaluating at $y=0$, we get a ``motivic" lift $T_0$ of Fulton--MacPherson's bivariant Riemann--Roch transformation $\tau :\bK_{alg} \to \bH \otimes \bQ$. 
The  covariant transformations $mC_y: \bK_0(\m V^{qp}/X \to pt) \to 
G_0(X)\otimes \bZ[y]$ and
$T_{y*}: \bK_0(\m V^{qp}/X \to pt) \to H_*(X) \otimes \bQ[y]$ agree for $k$ of characteristic zero with our motivic Chern- and Hirzebruch class transformations defined on 
$K_0(\m V^{qp}/X)$.
Finally, evaluating at $y=-1$, for $k$ of characteristic zero we get a ``motivic" lift $T_{-1}$ of Ernstr\"{o}m-Yokura's bivariant Chern class transformation $\gamma: \tilde{\bF}\to CH$.
\end{abstract}

\maketitle

\section{Introduction}\label{intro} 

The classical theory of characteristic classes of vector bundles is a natural transformation from the contravariant monoid functor $(\m Vect,\oplus)$ of isomorphism classes of complex or algebraic vector bundles,
or the associated Grothendieck group $K^0$,  to a contravariant cohomology theory $H^*$. When it comes to characteristic classes of singular spaces, they have been so far formulated as natural transformations from certain covariant theories to a covariant homology theory $H_*$. Topologically or geometrically, the following characteristic classes of singular spaces are most important and have been well-investigated by many people. Here we work either in the category
$\m V=\m V^{(qp)}_k$ of (quasi-projective) algebraic varieties 
(i.e. reduced separated schemes of finite type) over a base field $k$, with $H_*(X)=CH_*(X)$ the Chow homology groups, or in the category $\m V=\m V^{an}_c$ of compact reduced complex analytic spaces, with $H_*(X)=H^{BM}_{2*}(X)$ the even degree Borel-Moore homology
in the complex algebraic or analytic context:

\begin{itemize}
\item MacPherson's Chern class transformation \cite{BSY1, Ken, MacPherson}:
$$c_*: F(X) \to H_*(X),$$ 
defined on the group $F(X)$ of constructible functions in the algebraic context for $k$ of characteritic zero or in the compact complex analytic context.
\item Baum--Fulton--MacPherson's  Todd class or Riemann--Roch transformation \cite{BFM, Fulton-book}:
$$td_*: G_0(X) \to H_*(X)\otimes \bQ,$$
defined on the Grothendieck group $G_0(X)$ of coherent sheaves in the algebraic context
in any characteristic. In the compact complex analytic context such a transformation
can be deduced (compare with \cite{BSY1}) from Levy's $K$-theoretical Riemann-Roch transformation \cite{Levy}.
\item Goresky-- MacPherson's homology $L$-class \cite{GM}, which is extended as a natural transformation by Cappell-Shaneson \cite{CS} (see also \cite{BSY1, Yokura-TAMS, Woolf}):
$$L_*: \Omega_{sd}(X) \to  H_*(X)\otimes \bQ$$
defined on the cobordism group $\Omega(X)$ of selfdual constructible sheaf complexes.
This transformation is only defined for compact spaces in the complex algebraic or analytic context, with $H_*$ the usual homology, since its definition is based on a corresponding signature invariant together
with the Thom-Pontrjagin construction.
\end{itemize}

In 1973 R. MacPherson gave a survey talk about characteristic classes of singular varieties, and his survey article \cite{MacPherson2} ends with the following remark:\\

\emph{``It remains to be seen whether there is a unified theory of characteristic classes of singular varieties like the classical one outlined above."}\footnote{At that time Goresky--MacPherson's homology $L$-class was not available yet and it was defined only after the theory of Intersection Homology \cite{GM} was invented by Mark Goresky and Robert MacPherson. }\\

In our previous paper \cite{BSY1} (see also \cite {BSY2}, \cite{SY}, \cite{Sch-MSRI} and \cite{Yokura-MSRI}) we introduced in the algebraic context for $k$ of characteristic zero, as well as in the compact complex analytic context, the motivic Hirzebruch class transformation
$${T_y}_*: K_0(\m V/X) \to H_*(X)\otimes \bQ[y],$$
defined on  the relative Grothendieck group $K_0(\m V/X)$ of varieties over
$X\in Obj(\m V)$, with $\m V=\m V^{(qp)}_k$ resp. $\m V=\m V^{an}_c$.
This Hirzebruch class transformation ``unifies" the above three characteristic classes $c_*, td_*, L_*$ (see also \S 3) in the sense that we have the following commutative diagrams
of transformations:
 
$$\xymatrix{
& K_0(\Cal V/X)  \ar [dl]_{\epsilon} \ar [dr]^{{T_{-1}}_*} \\
{F(X) } \ar [rr] _{c_*}& &  H_*(X)\otimes \bQ.}
$$

$$\xymatrix{
&  K_0(\Cal V/X)  \ar [dl]_{mC_0} \ar [dr]^{{T_{0}}_*} \\
{G_0(X) } \ar [rr] _{td_*}& &  H_*(X)\otimes \bQ.}
$$

$$\xymatrix{
& K_0(\Cal V/X)  \ar [dl]_{sd} \ar [dr]^{{T_{1}}_*} \\
{\Omega_{sd}(X) } \ar [rr] _{L_*}& &  H_*(X)\otimes \bQ.}
$$

This ``unification" could be considered as a positive answer to the above MacPherson's remark. 
The commutativity of the diagrams above follows (by the functoriality for proper morphisms) already from the normalization condition
$$T_{y*}(X):=T_{y*}([id_{X}])= T^{*}_{y}(TX) \cap [X],$$
for $X$ a smooth manifold, since by ``resolution of singularities" the group $ K_0(\Cal V/X)$ is generated by isomorphism classes $[V \xrightarrow {h} X]$ 
of proper morphisms $h:V \to X$ with $V$ smooth.
Here the Hirzebruch class $T^{*}_{y}(E)$ of the complex or algebraic vector bundle $E$ over $X$ is defined to be (see \cite{Hirzebruch, HBJ}): 
$$T^{*}_{y}(E) := \prod _{i=1}^{\op {rank} E} Q_{y}(\alpha_i)\in H^*(X) \otimes \bQ[y],$$
with 
$$Q_{y}(\alpha):= \frac{\alpha(1+y)}{1-e^{-\alpha(1+y)}} -\alpha y
\quad \in \bQ[y][[\alpha]] \:. $$
Here $\alp _i$'s are the Chern roots of $E$, i.e., $\displaystyle c(E) = \prod_{i=1}^{\op{rank(E)}} (1 + \alp_i).$ Note that $Q_{y}$ is a normalized power series, i.e. $Q_{y}(0)=1$, with:
\begin{itemize}
\item $T^{*}_{-1}(E) =c(E)$ the Chern class, since $Q_{-1}(\alpha)=1+\alpha$.
\item $T^{*}_{0}(E) =td(E)$ the Todd class, since $\displaystyle  Q_{0}(\alpha)=\frac{\alpha}{1-e^{-\alpha}}$.
\item $T^{*}_{1}(E) =L(E)$ the Thom--Hirzebruch $L$-class, since $\displaystyle Q_{1}(\alpha)=\frac{\alpha}{\tanh \alpha}$.
\end{itemize}

Moreover, we also constructed in \cite{BSY1} 
 in the algebraic context for $k$ of characteristic zero, and in the compact complex analytic context, the motivic Chern class transformation
$$mC_y: K_0(\m V/X) \to G_0(X)\otimes \bZ[y].$$
This satisfies the normalization condition 
$$mC_{y}(X):= mC_{y}([id_{X}])= \sum_{i=0}^{dim(X)} \; [\Lambda^{i} T^{*}X]\cdot y^{i}
= \lambda_{y}([T^{*}X])\cap [\m O_{X}]$$
for $X$ a smooth manifold,
with $\lambda_{y}$ the total $\lambda$-class. 
In the compact complex analytic (or complex algebraic) context, the transformation $mC_y$ could also be composed
with the $K$-theoretical {\em Riemann-Roch transformation\/}
$$\alpha:  G_{0}(X)\to K^{top}_{0}(X) $$ 
to the (periodic) topological $K$-homology
(in even degrees) constructed by Levy \cite{Levy} (generalizing the
corresponding transformation of Baum-Fulton-MacPherson \cite{BFM2} for the
quasi-projective complex algebraic context).
Then the Hirzebruch class transformation ${T_y}_*$ could also be defined as the composition 
$td_*\circ mC_y$, 
renormalized by the multiplication $\times (1+y)^{-i}$ on $H_i(X)\otimes \bQ[y]$
to fit with the normalization condition above. So $mC_y$ could be 
considered as a $K$-theoretical refinement of ${T_y}_*$.\\

Note that all the source and target functors appearing above are not only functorial for proper morphisms,
but also have compatible {\em cross products} $\times$ and {\em pullback Gysin homomorphisms} $f^!$ for a smooth morphism $f$.
Moreover, all the characteristic class transformations $c\ell_*$ above (like $c_*, td_*, L_*, mC_y, T_{y*}$) commute with the 
cross products $\times$. Similarly, they commute for a smooth morphism $f$ with the pullback Gysin homomorphisms $f^!$ only up to a correction factor 
$c\ell^*(T_f)$ given by the corresponding cohomological characteristic class of the tangent bundle $T_f$ to the fibers of $f$, i.e. one gets a 
{\em Verdier-Riemann-Roch formula} (see \cite{BSY1}):
$$c\ell_*\circ f^! = c\ell^*(T_f)\cap (f^! \circ c\ell_*)\:.$$
This generalizes a corresponding normalization condition for $X$ a smooth manifold (so that the constant map $X\to pt$ is smooth).
All these properties can be stated in a very efficient way by just saying that $cl_*$ is a natural transformation of {\em Borel-Moore functors} (with product)
in the sense of \cite{LP, Yokura-obt}, if the Gysin maps $f^!$ of the target functors are ``redefined or twisted" by the characteristic class
$c\ell^*(T_f)$ of the tangent bundle $T_f$ to the fibers of $f$ (see \cite{Quillen} and \cite[\S 4.1.9]{LM}). 
Here it is only important that the target functors
of our transformations $cl_*$ have a suitable theory of characteristic classes of (complex or algebraic) vector bundles (like first Chern classes of 
line bundles). So only the target functors should be an {\em oriented Borel-Moore (weak) homology theory} in the sense of Levine-Morel \cite{LM}
(like $CH_*, G_0$),
generalizing, 
in the algebraic context, the notion of a ``complex oriented (co)homology theory" (like $H^{BM}_*, K^{top}_{0}$) 
introduced by Quillen \cite{Quillen} in the context of
differentiable manifolds.
In fact, Quillen \cite{Quillen} introduced in geometric terms {\em complex cobordism} $\Omega_*^U$ as a  universal ``complex oriented (co)homology 
theory". More recently, Levine-Morel \cite{LM} introduced  {\em algebraic cobordism} $\Omega_*^{alg}$ as a universal ``oriented Borel-Moore (weak) 
homology theory" in the algebraic context over a base field of characteristic zero (see also Levine--Pandharipande \cite{LP} for a more geometric approach).\\

In early 1980's William Fulton and Robert MacPherson have introduced the notion of bivariant theory as a {\it categorical framework for the study of singular spaces}, which is the title of their AMS Memoir book \cite{Fulton-MacPherson} (see also Fulton's book
\cite{Fulton-book}). As reviewed very quickly in \S 2, a bivariant theory is definded on morphisms, instead of objects,  and unifies both a covariant functor and a contravariant functor. Important objects to
 be investigated in Bivariant Theories are what they call \emph{Grothendieck transformations} between given two bivariant theories. 
 A Grothendieck transformation is a bivariant version of a natural transformation. A bit more precisely, the main objective of \cite{Fulton-MacPherson} are 
 bivariant-theoretic Riemann--Roch transformations or bivariant analogues of various theorems of 
Grothendieck--Riemann--Roch type and Verdier--Riemann--Roch type (as mentioned before).
 
 A key example of \cite[Part II] {Fulton-MacPherson} is the bivariant Riemann--Roch transformation $\tau :\bK_{alg} \to \bH \otimes \bQ$ on the category $\m V=\m V^{qp}_{\bC}$ of complex quasi-projective varieties,
with $\bK_{alg}(f)$ the bivariant algebraic $K$-theory of $f$-perfect complexes and $\bH$
the even degree bivariant homology. It unifies the covariant Todd class transformation $td_*$
and the contravariant Chern character $ch$. 
An algebraic version on the category
$\m V=\m V^{qp}_{k}$ of quasi-projective varieties over a base field $k$ of any characteristic
was constructed later on in \cite[Example 18.3.19]{Fulton-book}, with  $\bH=CH$
the bivariant operational Chow groups. 
As another example, Fulton-MacPherson constructed in \cite[Part II]{Fulton-MacPherson} in the complex quasi-projective context
also a Grothendieck transformation $\alpha: \bK_{alg} \to \bK_{top}$ between their bivariant algebraic and topological $K$-theory, as well as in
\cite[Part I, \S 6] {Fulton-MacPherson} a bivariant Whitney class transformation. And they asked
in the complex algebraic context for a corresponding bivariant Chern class transformation 
$\gamma: \bF\to \bH$ on their bivariant theory $\bF$ of constructible functions satisfying a suitable local Euler condition, which generalizes the covariant MacPherson Chern class transformation $c_*$. For $\bH$ the even degree bivariant homology, this problem was solved by Brasselet \cite{Brasselet} in a suitable
context (even for compact analytic spaces), 
whereas Ernstr\"{o}m-Yokura \cite{EY1} solved it for
$\bH=A^{PI} (\supset CH)$ another bivariant operational Chow group theory (for the notation $A^{PI}$ see \cite{EY1}). 
Finally, relaxing the local Euler condition, they introduced in \cite{EY2} a bivariant Chern class transformation $\gamma: \tilde{\bF}\to CH$ from another bivariant theory $\tilde{\bF}$ of constructible functions.
This last approach is based on the usual calculus of constructible functions and the surjectivity of $c_*: F(X)\to CH_*(X)$, so it works in the algebraic context over any base field $k$ of characteristic zero (even though it was stated
in \cite{EY2} only in the complex algebraic context). Here $\tilde{\bF}(X\to pt)=F(X)$ follows from the multiplicativity of $c_*$ with respect to cross products $\times$.\\

One of the main objects of the present paper is to obtain two bivariant analogues 
$$mC_y=\La_y^{mot}: \bK_0(\m V^{qp}/X \to Y) \to \bK_{alg}(X \to Y)\otimes \bZ[y]$$ and
$$T_y: \bK_0(\m V^{qp}/X \to Y) \to \bH(X \to Y) \otimes \bQ[y]$$
of the motivic Chern and Hirzebruch class transformations $mC_y$ and ${T_y}_*$, with 
$T_y$  defined as the composition 
$\tau \circ mC_y$, 
renormalized by the multiplication $\times  (1+y)^i$ on $\bH^i(-)\otimes \bQ[y]$.
Moreover,
$T_y$ unifies the  bivariant Riemann--Roch transformation $\tau :\bK_{alg} \to \bH \otimes \bQ$
 (for $y=0$) and the
bivariant Chern class transformation $\gamma: \tilde{\bF}\to CH$ (for $y=-1$).
Note that a bivariant $L$-class transformation (corresponding to $y=1$) is still missing.
In \cite{BSY3, BSY4} we considered a kind of general construction of a bivariant analogue of a given natural transformation between two covariant functors, but our approach presented in this paper is quite different from it. The former is more ``operational", but the latter  is more ``direct" and very ``motivic", as outlined below.\\

Let $\m V=\m V^{(qp)}_k$ be the category of (quasi-projective) algebraic varieties 
(i.e. reduced separated schemes of finite type) over a base field $k$ of any characteristic, or let $\m V=\m V^{an}_c$ be the category of compact reduced complex analytic spaces.
 On the category $\m V$ we define 
$$\bM(\m V/X \xrightarrow {f} Y)$$ 
to be the free abelian group on the set of isomorphism classes $[V \xrightarrow {h} X]$ of proper morphisms $h:V \to X$ such that the composite 
$f \circ h: V \to Y$ is a smooth morphism, in other words, $h: V \to X$ is ``a left quotient" of a smooth morphism $s: V \to Y$ devided 
by the given morphism $f$:
$$ f \circ h = s \quad \text {or} \quad  h = \frac {s}{f},$$
$$\xymatrix{
& V\ar [dl]_{h} \ar [dr]^{s} \\
X\ar [rr] _{f}& &  Y.}$$
Here two morphisms $h: V \to X$ and $h': V' \to X$ are called isomorphic to each other if there exists an isomorphism $\phi: V \xrightarrow {\cong} V'$ such that the following diagram commutes
$$\xymatrix{
V\ar[dr]_ {h}\ar[rr]^ {\phi} && V' \ar[dl]^{h'}\\
& X.}$$

\begin{thm}
The association $\bM(\m V/-)$ becomes a bivariant theory with natural bivariant-theoretic operations. 
\end{thm}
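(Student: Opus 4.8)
The goal is to equip $\bM(\m V/-)$ with the three Fulton--MacPherson operations --- product, pushforward, pullback --- and a unit, each given on generators by an evident geometric recipe, and then to verify the seven bivariant axioms (A1), (A2), (A3), (A12), (A13), (A23), (A123) together with the unit axioms. Throughout I would take the class of \emph{confined} morphisms to be the proper morphisms and the class of \emph{independent squares} to be all fibre squares in $\m V$; then I use freely that proper morphisms are stable under composition and base change and contain all isomorphisms, that smooth morphisms are likewise stable under composition and base change, and that fibre products exist in $\m V$ (inside the ambient product in the algebraic case, by the standard existence statement in the analytic case). From a bivariant theory one automatically gets the associated covariant functor $X\mapsto\bM(\m V/X\to pt)$ (via pushforward) and contravariant functor $X\mapsto\bM(\m V/X\xrightarrow{\mathrm{id}}X)$ (via pullback), so nothing extra needs to be checked there.

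\noindent\textbf{The operations.} For $[V\xrightarrow{h}X]\in\bM(\m V/X\xrightarrow{f}Y)$ and $[W\xrightarrow{k}Y]\in\bM(\m V/Y\xrightarrow{g}Z)$ I would set the \emph{product} $[V\xrightarrow{h}X]\bullet[W\xrightarrow{k}Y]:=[V\times_Y W\to X]$, where $V\times_Y W$ is formed via $f\circ h$ and $k$ and the map to $X$ is the projection to $V$ followed by $h$. For $X\xrightarrow{f}Y\xrightarrow{g}Z$ with $f$ proper I set the \emph{pushforward} $f_*[V\xrightarrow{h}X]:=[V\xrightarrow{f\circ h}Y]$. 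For a fibre square with bottom arrow $g\colon Y'\to Y$ and top arrow $f\colon X\to Y$ pulled back to $f'\colon X'\to Y'$ I set the \emph{pullback} $g^*[V\xrightarrow{h}X]:=[V\times_X X'\to X']$, and I note that since $X'=X\times_Y Y'$ one has $V\times_X X'=V\times_Y Y'$. The \emph{unit} is $1_X:=[\mathrm{id}_X]\in\bM(\m V/X\xrightarrow{\mathrm{id}}X)$, legitimate because $\mathrm{id}_X$ is both proper and smooth. All three are extended (bi)linearly. In each case the two defining constraints survive: the horizontal leg is proper because it is assembled from $h$, $f\circ h$, or their base changes by composing proper maps, and the composite to the target is smooth because, after invoking the fibre-product identity, it factors as a smooth map ($g\circ k$, $(g\circ f)\circ h$, or $f\circ h$ respectively) precomposed with a base change of a smooth map. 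I would also record the small but useful point that, although fibre products of reduced spaces need not be reduced, in each of the three constructions the relevant fibre product is smooth over a \emph{reduced} base (over $W$, resp.\ over $Y'$), hence automatically reduced --- so no passage to reductions is ever required and $\bM(\m V/-)$ genuinely takes values in $\m V$. Well-definedness on isomorphism classes is immediate from the universal property of fibre products.

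\noindent\textbf{The axioms.} Associativity of the product (A1) is associativity of iterated fibre products; functoriality of pushforward (A2) is functoriality of composition of proper morphisms; functoriality of pullback (A3) is the pasting lemma for fibre squares. The three mixed compatibilities --- product/pushforward (A12), product/pullback (A13), pushforward/pullback (A23) --- and the projection formula (A123) each reduce to a routine diagram chase: write out the nested (iterated) fibre products on each side, identify the two resulting spaces through their universal properties, check that the maps to $X$ agree, and observe that every intermediate object sits in the correct bivariant group thanks to stability of ``proper'' and ``smooth'' under base change and composition. The unit axioms $1_X\bullet\alpha=\alpha=\alpha\bullet1_Y$ come from the canonical identifications $X\times_X V\cong V$ and $V\times_Y Y\cong V$ over $X$.

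\noindent\textbf{Main obstacle.} There is no deep obstacle here: the entire content is (i) the permanence of ``proper'' and ``smooth'' under the three operations, which is precisely what keeps the constructions inside $\bM(\m V/-)$, and (ii) the bookkeeping of nested fibre squares in verifying the seven axioms, where the only thing genuinely requiring care is confirming that each square appearing in a pullback is a bona fide fibre square (hence independent), which is handled uniformly by the pasting lemma. The automatic-reducedness remark above is what removes the one spot where the specific geometry of $\m V$ --- as opposed to that of all schemes or all analytic spaces --- could otherwise have caused trouble.
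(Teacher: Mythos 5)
Your proposal is correct and follows exactly the approach the paper intends: you define the product, pushforward, pullback, and unit by the same geometric recipes (the paper even writes the product via the intermediate $X'=X\times_Y W$, but this is your $V\times_Y W$ after collapsing), observe that properness and smoothness are preserved by composition and base change so the constraints survive, and reduce the seven Fulton--MacPherson axioms to routine bookkeeping with nested fibre squares --- which is precisely the verification the paper ``leaves for the reader'' (citing the general result of \cite{Yokura-obt} for a full proof). Your remark that each new fibre product is smooth over a reduced base, hence itself reduced and genuinely an object of $\m V$, is a nice touch that addresses the one place where the set-up quietly depends on $\m V$ not being the full scheme category.
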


 \begin{rem}
 The associated ``cohomology theory"  ${\bM}^*(\m V/X) = \bM(\m V/X \xrightarrow{\op {id}_X}  X)$ is the free abelian group generated by the isomorphism 
 classes of proper and smooth morphism $[V \xrightarrow{h}  X]$. So it is a geometric approach to cohomology classes in the algebraic or compact complex 
 analytic context, based on proper submersions having a tangent bundle to the fibers (as a substitute for a bundle theoretic approach to cohomology 
 classes in topology). Moreover, the bivariant theory $\bM(\m V/-)$ based on (isomorphism classes of) 
 ``left quotients" $h = \frac {s}{f}$ with $h$ proper and $s$ smooth fits nicely with the recent approach of Emerson-Meyer \cite{EM2, EM3}
 to ``(bivariant) $KK$-theory via correspondences" (here ``bivariant" has a meaning different from the notion of Fulton-MacPherson used in this paper).
 In fact, one can see the ``left quotient" $h = \frac {s}{f}$ also as a correspondence between $X$ and $Y$ fitting with the given morphism
 $f: X\to Y$. Forgetting $f$, one can define the free abelian group $\bM(\m V/X,Y)$ generated by the isomorphism 
 classes of such correspondences (with $h$ proper and $s$ smooth), with the ``usual" composition $\circ$ of correspondences. Then
 our definition of the bivariant product $\bullet$ fits under the tautological map
 (forgetting $f$):
 $$forget: (\bM(\m V/X \xrightarrow {f} Y),\bullet) \to (\bM(\m V/X,Y),\circ)$$
 with the composition product of these correspondences (and it is also functorial in $X$ with respect to the corresponding pushforwards under proper 
 morphisms). As will be explained elsewhere (see \cite{BaSY}), in the context of complex varieties there is also a similar transformation
 $$(\bM(\m V/X,Y),\circ) \to (KK(X,Y),\circ)  $$
 to the ``$KK$-theory via correspondences"  of Emerson-Meyer \cite{EM2, EM3} 
 (and more generally to their counterpart based on a complex oriented cohomology theory).
 \end{rem}

 Let $\bB$ be a  bivariant theory on  $\m V$ such that a smooth morphism $f: X\to Y$ has a 
 {\em stable orientation} $\theta(f)\in \bB(f)$, like $\bM(\m V/-)$, with
 $\theta(f):=[X \xrightarrow {\op {id}_X} X]$ (these notions will be explained in \S 2).
In the algebraic context, examples for $\bB$ are given by the bivariant 
algebraic $K$-theory $\bK_{alg}$ of relative perfect complexes and the 
bivariant operational Chow groups $CH$. Examples in the complex algebraic or analytic context are given by the (even degree) bivariant topological $K$-theory $\bK^{top}$ or homology theory $\bH\otimes R$ of Fulton--MacPherson \cite{Fulton-MacPherson}, with $R=\bZ, \bQ, \bQ[y]$.
Another example is Fulton-MacPherson's bivariant theory $\bF$ of constructible functions in the complex algebraic or analytic context,
or Ernstr\"{o}m-Yokura's bivariant theory $\tilde{\bF}$ of constructible functions in the algebraic context over a base field of characteristic zero,
with $\theta(f)=\jeden_f:=1_X$ for a smooth morphism $f: X\to Y$.

\begin{thm}\label{thm:univ} Let $\bB$ be a  bivariant theory on  $\m V$ such that a smooth morphism $f: X\to Y$ has a 
stable orientation $\theta(f)\in \bB(f)$.
Then there exists a unique Grothendieck transformation
$$\ga:=\ga_{\theta}: \bM(\m V/-) \to \bB(-)$$
satisfying the normalization condition that for a smooth morphism $f:X \to Y$ the following identity holds in $\bB(X \xrightarrow {f} Y)$:
$$\ga([X \xrightarrow {\op {id}_X} X]) =  \theta(f).$$
\end{thm}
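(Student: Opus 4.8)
The plan is to \emph{define} the Grothendieck transformation $\ga$ directly on the free generators of $\bM(\m V/-)$ and then verify, one by one, that it commutes with the bivariant product $\bullet$, with proper pushforward and with arbitrary pullback, and that it sends units to units. For a generator $[V \xrightarrow{h} X]$ of $\bM(\m V/X \xrightarrow{f} Y)$ --- recall $h$ is proper and $s := f\circ h\colon V\to Y$ is smooth, hence carries the stable orientation $\theta(s)\in \bB(V\xrightarrow{s}Y)$ --- I would set
$$\ga\bigl([V \xrightarrow{h} X]\bigr) := h_{*}\bigl(\theta(f\circ h)\bigr) \ \in\ \bB(X \xrightarrow{f} Y),$$
where $h_{*}\colon \bB(V\xrightarrow{s}Y)\to\bB(X\xrightarrow{f}Y)$ is the bivariant pushforward along the proper morphism $h$, and extend $\bZ$-linearly. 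This is well defined, since replacing $h$ by $h\circ\phi$ for an isomorphism $\phi$ leaves the right-hand side unchanged by functoriality of pushforward; and the normalization condition holds \emph{by construction}, because $(\op{id}_{X})_{*}\bigl(\theta(f)\bigr)=\theta(f)$ for $f$ smooth. Uniqueness is then quick and worth recording first: inside $\bM(\m V/-)$ every generator factors as $[V \xrightarrow{h} X] = h_{*}\bigl([V \xrightarrow{\op{id}_{V}} V]\bigr)$, and $[V \xrightarrow{\op{id}_{V}} V]\in\bM(\m V/V\xrightarrow{s}Y)$ is by definition the stable orientation of the smooth morphism $s = f\circ h$ in $\bM(\m V/-)$; hence any Grothendieck transformation $\ga'$ satisfying the normalization condition must satisfy $\ga'([V \xrightarrow{h} X]) = h_{*}\bigl(\ga'([V \xrightarrow{\op{id}_{V}} V])\bigr) = h_{*}\bigl(\theta(f\circ h)\bigr) = \ga([V \xrightarrow{h} X])$, using only that $\ga'$ commutes with pushforward.

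It then remains to verify that $\ga$ is a Grothendieck transformation. Compatibility with pushforward along a proper morphism $p$ reduces to the functoriality $p_{*}\circ h_{*}=(p\circ h)_{*}$ of $\bB$. Compatibility with pullback along an arbitrary morphism $g\colon Y'\to Y$ follows from the commutativity of pushforward with pullback in $\bB$ (axiom $(A_{23})$) together with the base-change compatibility of the stable orientation: pulling $[V\xrightarrow{h}X]$ back replaces $h$ by its proper base change $h'$ and $s$ by the base-changed smooth morphism $s'$, and $\theta(s')=g^{*}\theta(s)$, so $g^{*}\bigl(h_{*}\theta(s)\bigr)=h'_{*}\bigl(g^{*}\theta(s)\bigr)=h'_{*}\bigl(\theta(s')\bigr)$, which is exactly $\ga$ of the pulled-back generator. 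Preservation of units is the special case $1_{X}=[X\xrightarrow{\op{id}_{X}}X]=\theta(\op{id}_{X})\mapsto\theta(\op{id}_{X})=1_{X}$.

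The step I expect to be the main obstacle is compatibility with the bivariant product; it is here that one really uses the structure of $\bM(\m V/-)$ coming from the preceding theorem. One needs the explicit description of $\bullet$ on $\bM(\m V/-)$: the product of $[V\xrightarrow{h}X]$ over $X\xrightarrow{f}Y$ (with $s=fh$ smooth) and $[W\xrightarrow{k}Y]$ over $Y\xrightarrow{g}Z$ (with $t=gk$ smooth) is $[V'\xrightarrow{h\circ p}X]$, where $V'=V\times_{Y}W$ is formed from $s\colon V\to Y$ and $k\colon W\to Y$, with projections $p\colon V'\to V$ (proper, a base change of $k$) and $q\colon V'\to W$ (smooth, a base change of $s$), so that $(gf)\circ(hp)=t\circ q$ is again smooth. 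Granting this, one computes on the side of $\bB$:
\begin{align*}
h_{*}\bigl(\theta(s)\bigr)\bullet k_{*}\bigl(\theta(t)\bigr)
&= h_{*}\Bigl(\theta(s)\bullet k_{*}\bigl(\theta(t)\bigr)\Bigr)
= h_{*}\,p_{*}\bigl(k^{*}\theta(s)\bullet\theta(t)\bigr)\\
&= h_{*}\,p_{*}\bigl(\theta(q)\bullet\theta(t)\bigr)
= (hp)_{*}\bigl(\theta(t\circ q)\bigr),
\end{align*}
using successively: compatibility of $\bullet$ with proper pushforward (axiom $(A_{12})$); the projection formula (axiom $(A_{123})$) for the fiber square $V'=V\times_{Y}W$; the base-change identity $k^{*}\theta(s)=\theta(q)$; the multiplicativity $\theta(q)\bullet\theta(t)=\theta(t\circ q)$ of the stable orientation along the composable smooth morphisms $q$ and $t$; and functoriality of pushforward. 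Since $(hp)_{*}\bigl(\theta(t\circ q)\bigr)=\ga\bigl([V'\xrightarrow{hp}X]\bigr)=\ga\bigl([V\xrightarrow{h}X]\bullet[W\xrightarrow{k}Y]\bigr)$, this is precisely the desired identity. The whole argument is formal and applies without change in the algebraic (any characteristic), complex algebraic, and compact complex analytic settings, since it uses only the axioms of a bivariant theory and the formal properties of stable orientations (the identity has orientation the unit; orientations are multiplicative along composites of smooth morphisms and compatible with base change).
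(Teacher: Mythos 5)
Your proof is correct, and since the paper itself does not reproduce a proof (it defers to \cite[Theorem 3.1]{Yokura-obt}), the comparison can only be to what that reference does; your argument is exactly the expected construction: define $\ga$ on generators by $\ga([V\xrightarrow{h}X]) = h_*(\theta(f\circ h))$, derive uniqueness from the factorization $[V\xrightarrow{h}X]=h_*([V\xrightarrow{\op{id}_V}V])$ together with the normalization, and then check the three bivariant operations, with the product being the one nontrivial verification via the projection formula, base-change stability $k^*\theta(s)=\theta(q)$, and multiplicativity $\theta(q)\bullet\theta(t)=\theta(t\circ q)$. The only cosmetic mismatch is that you cite Fulton--MacPherson's axiom labels $(A_{12}), (A_{23}), (A_{123})$ while the paper numbers them (B-4), (B-6), (B-7); mathematically these are the same axioms, so there is nothing to fix.
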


\begin{cor}
Let $c\ell: Vect(-) \to  \bB^*(-)$ be a contravariant functorial characteristic  class of algebraic (or analytic) vector bundles
with values in the associated cohomology theory, which is multiplicative in the sense that $c\ell(V) = c\ell(V') c\ell(V'')$ for 
any short exact sequence of vector bundles $0\to V'\to V \to V'' \to 0$, with $c\ell(T_{pt})=1_{pt}\in \bB^*(\{pt\})$. 
Assume that $c\ell$ commutes with the stable orientation $\theta$, i.e.
$$\theta(f)\bullet cl(V)=f^*cl(V)\bullet \theta(f)$$ 
for all smooth morphism $f: X\to Y$ and $V\in Vect(Y)$.
Then there exists a unique Grothendieck transformation
$$\ga_{c\ell}: \bM(\m V/-) \to \bB(-)$$
satisfying the normalization condition that for a smooth morphism $f:X \to Y$ the following identity holds in $\bB(X \xrightarrow {f} Y)$:
$$\ga_{c\ell}([X \xrightarrow {\op {id}_X} X]) = c\ell(T_f) \bullet \theta(f).$$
Here $T_f$ is the relative tangent bundle  of the smooth morphism $f$.
\end{cor}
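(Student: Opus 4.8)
The plan is to deduce the Corollary from Theorem~\ref{thm:univ} by replacing the given stable orientation $\theta$ of $\bB$ with the \emph{twisted} orientation
$$\theta'(f):=c\ell(T_f)\bullet\theta(f)\in\bB(X\xrightarrow{f}Y)$$
associated to each smooth morphism $f:X\to Y$. Once one checks that $\theta'$ is again a stable orientation of $\bB$ in the sense of \S 2, Theorem~\ref{thm:univ} applied to $\theta'$ produces a unique Grothendieck transformation $\ga_{\theta'}:\bM(\m V/-)\to\bB(-)$ with
$$\ga_{\theta'}([X\xrightarrow{\op{id}_X}X])=\theta'(f)=c\ell(T_f)\bullet\theta(f),$$
and we set $\ga_{c\ell}:=\ga_{\theta'}$; the uniqueness assertion is then inherited verbatim from Theorem~\ref{thm:univ}. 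So the whole content of the proof is the verification that the family $\theta'$ satisfies the axioms of a stable orientation.

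First I would check the normalization over a point: $\theta'(\op{id}_{pt})=c\ell(T_{pt})\bullet\theta(\op{id}_{pt})=1_{pt}\bullet 1_{pt}=1_{pt}$, using $c\ell(T_{pt})=1_{pt}$ from the hypothesis on $c\ell$ and $\theta(\op{id}_{pt})=1_{pt}$ from the axioms for $\theta$. Next, for multiplicativity under composition of smooth morphisms $X\xrightarrow{f}Y\xrightarrow{g}Z$ I would start from the short exact sequence of relative tangent bundles
$$0\to T_f\to T_{g\circ f}\to f^{*}T_g\to 0,$$
which by multiplicativity of $c\ell$ gives $c\ell(T_{g\circ f})=c\ell(T_f)\bullet f^{*}c\ell(T_g)$ in $\bB^{*}(X)$. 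Combining this with $\theta(g\circ f)=\theta(f)\bullet\theta(g)$, moving $f^{*}c\ell(T_g)$ across $\theta(f)$ by the assumed compatibility $\theta(f)\bullet c\ell(T_g)=f^{*}c\ell(T_g)\bullet\theta(f)$, and using associativity of $\bullet$ together with the fact that the cup product in $\bB^{*}$ is a special case of $\bullet$, one obtains
$$\theta'(g\circ f)=c\ell(T_f)\bullet f^{*}c\ell(T_g)\bullet\theta(f)\bullet\theta(g)=c\ell(T_f)\bullet\theta(f)\bullet c\ell(T_g)\bullet\theta(g)=\theta'(f)\bullet\theta'(g).$$
Finally, for compatibility of $\theta'$ with bivariant pullbacks along an independent square with $f$ smooth, I would invoke the base-change identity $T_{f'}\cong g'^{*}T_f$ for the relative tangent bundle under such a square, together with the contravariant functoriality of $c\ell$ and $\bB^{*}$ and the compatibility of the bivariant pullback $g^{*}$ with $\bullet$, to get $g^{*}\theta'(f)=g^{*}c\ell(T_f)\bullet g^{*}\theta(f)=c\ell(T_{f'})\bullet\theta(f')=\theta'(f')$. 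Any residual clauses in the precise definition of ``stable orientation'' are handled the same way, since $c\ell(T_f)$ lies in the cohomology part $\bB^{*}(X)$ and merely multiplies $\theta(f)$ on the left.

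The main obstacle is the composition axiom: it requires juggling four operations at once --- the short exact sequence of relative tangent bundles, the multiplicativity of $c\ell$, the identification of the ring structure of $\bB^{*}$ with the bivariant product, and the commutation hypothesis between $c\ell$ and $\theta$ --- in a single chain of equalities, so the bookkeeping must be done carefully to make sure the left/right placements of the cohomology factors work out. Everything else, including existence, uniqueness, and the Grothendieck-transformation property of $\ga_{c\ell}$, then follows immediately by specializing Theorem~\ref{thm:univ} to $\theta'$.
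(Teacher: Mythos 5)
Your proposal takes exactly the same route as the paper: twist the given stable orientation to $\theta'(f):=c\ell(T_f)\bullet\theta(f)$, verify that $\theta'$ is again a stable orientation, and then invoke Theorem~\ref{thm:univ} for $\theta'$. Your verification of the composition axiom (using the short exact sequence $0\to T_f\to T_{g\circ f}\to f^{*}T_g\to 0$, multiplicativity of $c\ell$, and the assumed commutation of $c\ell$ with $\theta$) and of the stability axiom (using $T_{f'}\cong g'^{*}T_f$ and stability of $\theta$) are precisely the arguments the paper gives in its Lemma.

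One small point: for the unit axiom you only check $\theta'(\op{id}_{pt})=1_{pt}$, but Definition~\ref{canonical}(ii) requires $\theta'(\op{id}_X)=1_X$ for \emph{all} $X$, not just the point. The way the paper handles this is to observe that $T_{\op{id}_X}$ is the zero bundle over $X$, which is $p^{*}T_{pt}$ for the constant map $p:X\to pt$, so that by contravariant functoriality $c\ell(T_{\op{id}_X})=p^{*}c\ell(T_{pt})=p^{*}1_{pt}=1_X$; combined with $\theta(\op{id}_X)=1_X$ this gives $\theta'(\op{id}_X)=1_X$. You will want to add this one-line deduction, since the hypothesis $c\ell(T_{pt})=1_{pt}$ is phrased only over the point and the jump to general $X$ needs the functoriality argument. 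With that addition your proof is complete and coincides with the paper's.
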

This follows from Theorem \ref{thm:univ} by using the new ``twisted" stable orientation $\theta'(f):=c\ell(T_f) \bullet \theta(f)$ for a smooth morphism 
$f: X\to Y$. Similar twisting constructions are due to Quillen \cite{Quillen} (resp., Levine-Morel \cite[\S 4.1.9]{LM}) in the context of
complex oriented (co)homology theories (resp., oriented Borel-Moore (weak) homology theories).\\

 This $\ga_{c\ell}: \bM(\m V/ -) \to \bB( -)$ should be considered as \emph{a ``pre-motivic" bivariant theory of characteristic classes}.
In particular, if we consider the case of a mapping $X \to pt$ to a point, 
${\bM}_ *(\m V/X) := \bM(\m V/X \to pt)$ behaves covariantly for proper morphisms and we have
\begin{cor} ${\ga_{c\ell}}_*: {\bM}_ *(\m V/-)\to \bB_*(-)$
is a unique natural transformation satisfying the ``normalization condition" that for a smooth variety $X$ 
$${\ga_{c\ell}}_*([X \xrightarrow {\op {id}_X} X]) = c\ell(TX) \cap [X],$$
with $[X]:= \theta(p)\in \bB_*(X)$ (resp., $[X]:= p^!(1_{pt})\in \bB_*(X)$) the ``fundamental class" of $X$ given by the canonical orientation
(resp., the Gysin homomorphism) of the smooth morphism $p: X\to pt$.
\end{cor}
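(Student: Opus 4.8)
The plan is to deduce this corollary directly from the preceding Corollary (the bivariant statement $\ga_{c\ell}: \bM(\m V/-) \to \bB(-)$) by specializing to morphisms $X \to pt$. First I would recall that any bivariant theory $\bB$ has an associated covariant theory $\bB_*(X) := \bB(X \to pt)$, functorial for proper morphisms via the bivariant pushforward, and that a Grothendieck transformation restricts to a natural transformation on these covariant theories. So applying this to $\ga_{c\ell}$ gives at once a natural transformation ${\ga_{c\ell}}_*: {\bM}_*(\m V/-) \to \bB_*(-)$; moreover ${\bM}_*(\m V/X) = \bM(\m V/X \to pt)$ is, by the definition of $\bM$, the free abelian group on isomorphism classes $[V \xrightarrow{h} X]$ with $h$ proper and $V$ smooth (since $V \to pt$ smooth just means $V$ smooth), so the covariance for proper morphisms is the evident pushforward on such generators.

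Next I would verify the normalization condition. For a smooth variety $X$ the structure map $p: X \to pt$ is smooth with relative tangent bundle $T_p = TX$, and $[X \xrightarrow{\op{id}_X} X] \in {\bM}_*(\m V/X) = \bM(\m V/X \to pt)$ is exactly the class to which the bivariant Corollary applies. That Corollary gives $\ga_{c\ell}([X \xrightarrow{\op{id}_X} X]) = c\ell(T_p) \bullet \theta(p) = c\ell(TX) \bullet \theta(p)$ in $\bB(X \to pt) = \bB_*(X)$. It then remains only to identify $c\ell(TX) \bullet \theta(p)$ with $c\ell(TX) \cap [X]$: this is just the definition of the cap product $\cap: \bB^*(X) \otimes \bB_*(X) \to \bB_*(X)$ in a bivariant theory (namely $\omega \cap \sigma := \omega \bullet \sigma$ for $\omega \in \bB^*(X)$, $\sigma \in \bB_*(X)$), together with the definition $[X] := \theta(p) \in \bB_*(X)$ of the fundamental class coming from the canonical stable orientation of $p$ (or equivalently $[X] := p^!(1_{pt})$, using that a stable orientation induces the Gysin map $p^!(-) = \theta(p) \bullet -$).

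Finally I would address uniqueness. Suppose $\eta_*: {\bM}_*(\m V/-) \to \bB_*(-)$ is any natural transformation satisfying the same normalization. By ``resolution of singularities'' (in the algebraic characteristic-zero case) — or, more to the point, simply by the definition of the generators of $\bM(\m V/X \to pt)$ — the group ${\bM}_*(\m V/X)$ is generated by classes of the form $h_*[V \xrightarrow{\op{id}_V} V]$ for $h: V \to X$ proper with $V$ smooth. Naturality for the proper morphism $h$ forces $\eta_*([V \xrightarrow{h} X]) = h_*\,\eta_*([V \xrightarrow{\op{id}_V} V]) = h_*(c\ell(TV) \cap [V])$, which is completely determined; hence $\eta_* = {\ga_{c\ell}}_*$.

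The only genuine subtlety — and hence the step I would treat most carefully — is the compatibility bookkeeping: one must check that restricting the bivariant $\ga_{c\ell}$ to $\bB(- \to pt)$ really is natural for \emph{proper} pushforward (this uses that $\ga_{c\ell}$ commutes with bivariant proper pushforward, part of being a Grothendieck transformation) and that under this identification the bivariant product $c\ell(TX) \bullet \theta(p)$ agrees with the homological cap product $c\ell(TX) \cap [X]$ as conventionally normalized. Both are formal consequences of the axioms in \cite{Fulton-MacPherson}, but they are exactly the points where a sloppy identification of $\bB^*(X)$, $\bB_*(X)$, and the pairing between them could go wrong, so I would spell them out explicitly.
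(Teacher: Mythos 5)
Your proof is correct and takes essentially the same route the paper intends: specialize the bivariant Grothendieck transformation of Corollary~\ref{twisting} to $Y = pt$, identify the covariant restriction, verify the normalization via $T_p = TX$, and deduce uniqueness from the fact that $\bM_*(\mathcal V/X)$ is by definition freely generated by classes $[V \xrightarrow{h} X]$ with $h$ proper and $V$ smooth, together with naturality for proper pushforward. One small remark: the ``resolution of singularities'' aside you mention is unnecessary and slightly misleading here (it is needed for $K_0(\mathcal V/X)$, not for $\bM_*(\mathcal V/X)$), but you correctly note in the same breath that the generation is just by definition, so nothing is wrong; and the subtleties you flag at the end (compatibility of the restricted transformation with proper pushforward, and the identification $\bullet = \cap$ on $\bB^*(X)\otimes \bB_*(X)$) are indeed exactly the formal bivariant-axiom checks that the paper leaves implicit.
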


\begin{rem}\label{univ-BM} We note that in fact here $\bB_*(-)$ does not need to be associated to a bivariant theory, e.g. it would be enough that
$\bB_*(-)$ is an  oriented Borel-Moore (weak) homology theory like $\Omega_*^{alg}$ (or a complex oriented (co)homology theory
like $\Omega^{U}_*$).
In fact ${\bM}_ *(\m V/-)$ is a {\em universal Borel Moore functor} with product (,but without an additivity property), see \cite{Yokura-obt}.
Also the characteristic class $c\ell$ does not need to be multiplicative for the definition of the natural transformation 
${\ga_{c\ell}}_* :{\bM}_ *(\m V/-)\to \bB_*(-)$, although we do need the multiplicativity of $c\ell$ for the  multiplicativity
of ${\ga_{c\ell}}_*$ with respect to cross products $\times$. Similarly, for a corresponding Verdier-Riemann-Roch formula, we need the
compability
$$f^!(c\ell(V)\cap -) = f^*c\ell(V)\cap f^!(-)$$
of $c\ell$ with the Gysin homomorphism $f^!$ for a smooth morphism $f: X\to Y$ and $V\in Vect(Y)$.
\end{rem}
${\ga_{c\ell}}_*: {\bM}_ *(\m V/-)\to \bB_*(-)$ should be considered as a \emph{``pre-motivic" characteristic class transformation of possibly 
singular varieties}, e.g. like $\ga_*: {\bM}_ *(\m V/-)\to \Omega_*^{alg}$ resp. $\ga_*: {\bM}_ *(\m V/-)\to  \Omega^{U}_*$ associated to $c\ell(V):=1_Y$ 
for  $V\in Vect(Y)$. These fit, in the complex algebraic context, into the following commutative diagram of transformations:
\begin{equation}\label{diagram}
 \begin{CD}
  {\bM}_ *(\m V/X) @> \ga_* >> \Omega_*^{alg}(X) @>>> \Omega^{U}_*(X) \\
  @VVV @VVV @VVV \\
  K_0(\m V/X) @> mC_y >> G_0(X)[y] @> \alpha >> K_0^{top}(X)[y] \\
   @|  @VVV @VVV \\
   K_0(\m V/X) @> T_{y*} >> CH_*(X)\otimes \bQ[y] @>>> H^{BM}_{2*}(X)\otimes \bQ[y]\\
   @V \epsilon VV  @V y=-1 VV @V y=-1 VV \\
   F(X)  @> c_* >> CH_*(X)\otimes \bQ @>>> H^{BM}_{2*}(X)\otimes \bQ \:.
 \end{CD}
\end{equation}
The left (resp. outer) part of this diagram is also available in the algebraic context over a base field of characteristic zero
(resp. in the compact complex analytic context).

\begin{rem} The horizontal transformations in the upper line are the canonical ones associated to different universal theories, with
 ${\bM}_ *(\m V/-)$ the universal Borel Moore functor with product (but without an additivity property),  $\Omega_*^{alg}$ the
 universal oriented Borel-Moore (weak) homology theory and $\Omega^{U}_*$ the universal complex oriented (co)homology theory.
 Similarly, the theories $H_*$ in the last two vertical lines represent different such homology theories (with a $\cap$-product action of characteristic classes
 of vector bundles) in the algebraic resp. topological context,
 like the universal theories $\Omega_*^{alg}, \Omega^{U}_*$, the $K$-theoretical theories $G_0, K_0^{top}$ or the classical theories $CH_*, H^{BM}_{2*}$. 
 Also these six homology theories are associated to suitable bivariant theories, which are due to 
 Fulton-MacPherson \cite{Fulton-MacPherson}, except for algebraic cobordism $\Omega_*^{alg}$, where
 a corresponding ``operational" bivariant version has been recently constructed 
 by Gonz\'alez and Karu \cite{GK}.

In the topological context one also  has 
 {\em Mayer-Vietoris and long exact homology sequences},
 whereas in the algebraic context one has {\em short exact sequences}
\begin{equation}\label{short}
 \begin{CD}
   H_*(Z) @> i_* >> H_*(X) @> j^* >> H_*(U)@>>> 0 
   \end{CD}\end{equation}
for $i: Z\to X$ the inclusion of a closed algebraic subset, with open complement $j: U:=X\backslash Z\to X$.
For our unification, it is important to work with more general theories like ${\bM}_ *(\m V/-)$ and $K_0(\m V/-)$,
which are {\em not} oriented Borel-Moore (weak) homology (or complex oriented (co)homology) theories, like the 
group $F(X)$ of constructible functions in relation to MacPherson's Chern class transformation.
Here we do not have such a short exact sequence (\ref{short}) for ${\bM}_ *(\m V/-)$, but in the case of 
$K_0(\m V/-)$ (and also for $F(-)$) we even have short exact sequences
$$
  \begin{CD}
0@>>> K_0(\m V/Z) @> i_* >> K_0(\m V/X) @> j^* >> K_0(\m V/U)@>>> 0 \:.
   \end{CD}$$
But another important property, which fails for them, is ``homotopy invariance", e.g. 
$$p^*: K_0(\m V/X) \to K_0(\m V/X\times \bA^1)$$
is injective but not surjective for the projection $p: X\times \bA^1\to X$ (and similarly for $F(-)$).
\end{rem}

A true \emph{``motivic" characteristic class transformation of possibly singular varieties}
should factorize as in (\ref{diagram})  over the canonical group homomorphism
\begin{equation}
q: {\bM}_*(\m V/X) \to K_0(\m V/X),
\end{equation}
like the transformations ${\ga_{c\ell}}_*$ associated to the multiplicative characteristic classes $c\ell$ given by $c, td, L, T^*_y$, 
or the total lambda-class $\lambda_y((-)^*)$ of the dual vector bundle, as mentioned before
(in the complex analytic or algebraic context over a base field of characteristic zero).

Only then we can also speak of the corresponding characteristic class 
$$cl_*(X) := {\ga_{c\ell}}_*([\op{id}_X])$$
of a \emph{singular} space $X$, where 
${\ga_{c\ell}}_*$ is the bottom homomorphism in the following diagram:
 \begin{equation}\label{diagram2}
\xymatrix{
& {\bM}_*(\m V/-) \ar [dl]_{q} \ar [dr]^{{\ga_{c\ell}}_*} \\
{K_0(\m V/-)} \ar [rr] _{{\ga_{c\ell}}_*}& &  \bB_*(-)\,.}
\end{equation}
Note that for a singular space $X$ one has the distinguished element $[X \xrightarrow {\op {id}_X} X] \in K_0(\m V/X)$, 
but $[X \xrightarrow {\op {id}_X} X]$ cannot be defined in ${\bM}_*(\m V/X)$.

\begin{rem} In fact in \cite{BSY1} we proved more in the complex analytic or algebraic context over a base field of characteristic zero, with $\bB=CH\otimes R$ or $\bB=\bH\otimes R$:
The induced genus ${\ga_{c\ell}}_*: \bM(\m V/ pt) \to H_*(pt)\otimes R=R$
of a corresponding multiplicative characteristic class $c\ell$ has to be a specialization
of the Hirzebruch $\chi_y$-genus characterized by
$$\chi_y(\mathbb {P}^n)=1 - y + y^2 + \cdots +(-y)^n.$$
Moreover,
the Hirzebruch class $T^*_y$ is for $R=\bQ[y]$ the only multiplicative characteristic class $c\ell$ with this property, which is defined by a normalized power series in $\bQ[y][[\alpha]]$.
So it is the only such characteristic class $c\ell$, 
for which ${\ga_{c\ell}}_*: \bM(\m V/X \to pt) \to H_*(X)\otimes \bQ[y]$ can be factorized over the motivic group $K_0(\m V/X)$:
\begin{equation}\label{diagram2b}
\xymatrix{
& {\bM}_*(\m V/X) \ar [dl]_{q} \ar [dr]^{{\ga_{c\ell}}_*} \\
{K_0(\m V/X)} \ar [rr] _{{T_y}_*}& &  H_*(X)\otimes \bQ[y]\,.}
\end{equation}
\end{rem}
 
By ``resolution of singularities", the canonical group homomorphism
$q: {\bM}_*(\m V/X) \to K_0(\m V/X)$ is surjective
in the complex analytic or algebraic context over a base field of characteristic zero.
Moreover, using the ``weak factorization theorem" of \cite{AKMW, W}, its kernel was described by Bittner \cite{Bittner} in terms of a 
``blow-up relation". In some sense (as mentioned by a referee), this can be seen as a counterpart of the ``Conner-Floyd theorem" \cite{CF} in topology 
(or 
\cite{LM} in algebraic geometry), about recovering $K$-theory from cobordism.
Here we introduce the following bivariant analogue of the ``blow-up relation":

\begin{defn}\label{BL} For a morphism $f: X \to Y$ in the category $\m V=\m V^{(qp)}_k$ or 
$\m V=\m V^{an}_c$, we consider a blow-up diagram
$$\begin{CD}
E @> i'>> Bl_{S}X' \\
@VV q' V  @VV q V \\
S @> i >> X' @> h >> X  @> f >> Y\:,
\end{CD} $$
with $h$ proper and $i$ a closed embedding such that
$f \circ h$ as well as $f \circ h \circ i$ are smooth.
Here $q: Bl_{S}X' \to X'$ is the blow-up of $X'$ along $S$, with $q':E \to S$ the exceptional divisor map. Then also $f \circ h \circ q$ and $f \circ h \circ i\circ q'$ are smooth (with $Bl_{S}X'$ and $E$ quasi-projective in the case $\m V=\m V^{qp}_k$). 
Let $\mathbb {BL}(\m V/ X \xrightarrow{f}  Y)$ be the free abelian subgroup of 
$\bM(\m V/X \xrightarrow{f}  Y)$ generated by 
$$ [Bl_{S}X' \xrightarrow{h q}  X] - [E \xrightarrow{hiq'}  X] -  [X'\xrightarrow{h}  X] + [S\xrightarrow{hi}  X]$$
for any such diagram, and define
$$\bK_0(\m V/X \xrightarrow{f}  Y) := \frac{\bM(\m V/X \xrightarrow{f}  Y)}
{\mathbb {BL}(\m V/X \xrightarrow{f}  Y)}.$$
The corresponding equivalence class of $[V \xrightarrow{p}  X]$ shall be denoted by $\Bigl[[V \xrightarrow{p}  X] \Bigr].$
\end{defn}
 
Note that for $Y=pt$ a point, the smoothness of $f \circ h$ and $f \circ h \circ i$ above
is equivalent to $X'$ and $S$ are smooth manifolds. So in this case 
$\mathbb {BL}(\m V/ X \to pt)$ reduces to the ``blow-up relation" considered by Bittner.
In particular, we get a canonical group homomorphism $\bK_0(\m V/X\to pt) \to K_0(\m V/X)$
to the relative motivic Grothendieck group of varieties over $X$, which by
Bittner's theorem is an isomorphism in the complex analytic or algebraic context over a base field of characteristic zero.

\begin{thm}\label{thm:main} Let $\m V=\m V^{(qp)}_k$ be the category of (quasi-projective) algebraic varieties 
(i.e. reduced separated schemes of finite type) over a base field $k$ of any characteristic, or let $\m V=\m V^{an}_c$ be the category of compact reduced complex analytic spaces.
\begin{enumerate}
\item[(i)] $\bK_0(\m V / - )$ can be given uniquely the structure of a bivariant theory so that the canonical projection 
$\bB q: \bM(\m V/-) \to \bK_0(\m V / - )$ is a Grothendieck transformation.
\item[(ii)]  There exists a unique Grothendieck transformation
$$mC_y=\La_y^{mot}: \bK_0(\m V_k^{qp}/ - ) \to \bK_{alg}( - )\otimes \bZ[y]$$
satisfying the normalization condition that for a smooth morphism $f: X \to Y$ 
 the following equality holds in $\bK_{alg}(X \xrightarrow {f} Y) \otimes \bZ[y]$:
$$\La_y^{mot}\Bigl(\Bigl[[X  \xrightarrow{\op {id}_X}  X]\Bigr]\Bigr) = \La_y(T^*_f) \bullet \theta(f).$$
\item[(iii)]  Let $T_y : \bK_0(\m V_k^{qp} / - ) \to \bH(-) \otimes \bQ[y]$
be defined as the composition 
$\tau \circ \La_y^{mot}$, renormalized by $\cdot (1+y)^i$ on $\bH^i(-)\otimes \bQ[y]$.
Here $\bH$ is either the operational bivariant Chow group,
or the even degree bivariant homology theory for $k=\bC$, with $\tau$ the corresponding
Riemann-Roch transformation. \\
Then $T_y$ 
is the unique Grothendieck transformation
satisfying the normalization condition that for a smooth morphism $f: X \to Y$  the following equality holds in $\bH(X \xrightarrow {f} Y) \otimes \bQ[y]$:
$$T_y\Bigl(\Bigl[[X  \xrightarrow{\op {id}_X}  X]\Bigr]\Bigr) = T^*_y(T_f) \bullet \theta(f).$$
\end{enumerate}
\end{thm}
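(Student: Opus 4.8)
The plan is to reduce each of the three parts to the corresponding universality statement already available at the level of $\bM(\m V/-)$, namely Theorem \ref{thm:univ} and its Corollary, and then to check that the extra "blow-up relation" defining $\bK_0(\m V/-)$ is respected. For part (i), I would first verify that the subgroups $\mathbb{BL}(\m V/X\xrightarrow{f}Y)$ form a bivariant ideal in $\bM(\m V/-)$: one must show that for each of the three bivariant operations (product $\bullet$, pushforward $f_*$, pullback $g^*$) the image of a blow-up generator lands again in the $\mathbb{BL}$-subgroup of the appropriate group. For pushforward this is immediate because composing $h$ with another proper map preserves the shape of the generator. For contravariant pullback one uses that blow-up squares and closed embeddings pull back to blow-up squares and closed embeddings (base change for blow-ups along the strict transform), together with the fact that smoothness of $f\circ h$, $f\circ h\circ i$ is preserved under base change. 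For the product $\bullet$ with an arbitrary class $[W\xrightarrow{g}Y]$ on the other side, one fibers the whole blow-up diagram over $W$ using the smoothness of $s=f\circ h$ (so the fiber product is again of the required type) and again invokes base change for blow-ups. Once $\mathbb{BL}$ is shown to be a bivariant ideal, the quotient $\bK_0(\m V/-)$ inherits a unique bivariant structure making $\bB q$ a Grothendieck transformation; uniqueness is automatic since $\bB q$ is surjective.

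For part (ii), I would apply the Corollary to Theorem \ref{thm:univ} with $\bB=\bK_{alg}\otimes\bZ[y]$ and the multiplicative characteristic class $c\ell=\La_y((-)^*)$, the total $\lambda$-class of the dual bundle; its compatibility with the stable orientation $\theta$ of $\bK_{alg}$ is the standard Gysin/projection-formula property of $K$-theory, and $\La_y(T^*_{pt})=1$. This already produces a Grothendieck transformation $\gamma_{c\ell}\colon\bM(\m V^{qp}_k/-)\to\bK_{alg}(-)\otimes\bZ[y]$ with the right normalization. The content of the theorem is then that $\gamma_{c\ell}$ kills the blow-up ideal, i.e. factors through $\bB q$. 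This is the key step and I expect it to be the main obstacle: it is the bivariant incarnation of the computation, going back to \cite{BSY1} and Bittner's description \cite{Bittner} of $\ker q$, that the motivic Chern class of a blow-up $Bl_S X'$ with smooth center in a smooth ambient space matches the alternating sum over $E$, $X'$, $S$. I would deduce it from the absolute ($Y=pt$) blow-up relation already known for $mC_y$ on $K_0(\m V/X)$ together with a Verdier–Riemann–Roch argument: the smoothness of $f\circ h$ means the whole generator lives "relatively over $Y$", and applying the VRR-type compatibility of $\La_y^{mot}$ with the smooth pullback along $Y\to pt$ (equivalently, using that $\bB q$ and $\gamma_{c\ell}$ are both Grothendieck transformations and the generator is a $\theta(f)$-twist of an absolute class pulled back along the fibers) reduces the vanishing of $\gamma_{c\ell}$ on a relative blow-up generator to the vanishing of $mC_y$ on the fiberwise absolute blow-up generator, which is Bittner's relation. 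Uniqueness of the resulting $mC_y$ on $\bK_0$ follows from surjectivity of $\bB q$ together with the uniqueness clause of Theorem \ref{thm:univ}.

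For part (iii), once $mC_y=\La_y^{mot}$ on $\bK_0(\m V^{qp}_k/-)$ is constructed, the composite $\tau\circ\La_y^{mot}$ with the Fulton–MacPherson bivariant Riemann–Roch Grothendieck transformation $\tau\colon\bK_{alg}\to\bH\otimes\bQ$ (available in the stated algebraic and complex contexts) is again a Grothendieck transformation into $\bH\otimes\bQ[y]$; the renormalization by $\cdot(1+y)^i$ on $\bH^i$ is a degree-wise automorphism of the bivariant theory $\bH\otimes\bQ[y]$ (it is multiplicative because the bivariant product respects the cohomological grading additively), so the renormalized composite $T_y$ remains a Grothendieck transformation. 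Its value on $\bigl[[X\xrightarrow{\op{id}}X]\bigr]$ for $f$ smooth is $\tau$ applied to $\La_y(T^*_f)\bullet\theta(f)$, renormalized; by the Riemann–Roch normalization of $\tau$ (which sends $\theta_{\bK}(f)$ to the relative Todd class times $\theta_{\bH}(f)$ and intertwines $K$-theoretic and Chow-theoretic Chern characters) this equals $\bigl(\prod Q_y(\alpha_i)\bigr)\bullet\theta(f)=T^*_y(T_f)\bullet\theta(f)$ after the $(1+y)^i$ twist, exactly the identity defining the Hirzebruch class $T^*_y$ in the introduction. Uniqueness follows again from Theorem \ref{thm:univ} applied to the "twisted" stable orientation $T^*_y(T_f)\bullet\theta(f)$ on $\bH\otimes\bQ[y]$, since any Grothendieck transformation $\bK_0(\m V^{qp}_k/-)\to\bH\otimes\bQ[y]$ with this normalization pulls back along $\bB q$ to one on $\bM(\m V/-)$ with the same normalization, which Theorem \ref{thm:univ} pins down, and $\bB q$ is surjective.
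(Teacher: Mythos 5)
Your plan for part (i) matches the paper's: one shows that the subgroups $\mathbb{BL}(\m V/X\xrightarrow{f}Y)$ are preserved under the three bivariant operations, using the stability of the blow-up diagram under (flat) base change (the paper's Lemma \ref{key-lemma}(2)), and then the quotient inherits the bivariant structure with $\bB q$ a Grothendieck transformation. Your plan for part (iii) is also exactly the paper's: compose $\La_y^{mot}$ with the bivariant Riemann--Roch $\tau$, observe that the $(1+y)^i$ renormalization on $\bH^i$ is a Grothendieck automorphism because $\bH$ is graded, and verify the normalization for a smooth $f$ by the explicit power-series computation using $\tau(\m O_f)=td(T_f)\bullet [f]$.

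For part (ii), however, the decisive step --- showing that $\La_y^{mot}$ kills the relative blow-up relations --- is where you diverge from the paper, and your proposed route has a genuine gap. You want to reduce the vanishing on a relative blow-up generator to the \emph{absolute} ($Y=pt$) blow-up relation for $mC_y$ on $K_0(\m V/X)$ by a Verdier--Riemann--Roch argument, on the grounds that the generator ``lives relatively over $Y$'' and is ``a $\theta(f)$-twist of an absolute class pulled back along the fibers.'' That is not literally the case: in the defining blow-up diagram $S\hookrightarrow X'\to X\xrightarrow{f}Y$ only $fh$ and $fhi$ are required smooth, not $X'$ or $S$ over the base field, so the generator is \emph{not} the pullback of an absolute blow-up generator, and there is no smooth map $Y\to pt$ along which to invoke Verdier--Riemann--Roch. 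More seriously, the absolute blow-up relation for $mC_y$ from \cite{BSY1} is only available over a base field of characteristic zero (it rests on Bittner's presentation and hence on resolution of singularities and weak factorization), whereas Theorem \ref{thm:main}(ii) is claimed for \emph{any} characteristic. The paper avoids this entirely by a direct computation in relative $K$-theory: using Gros's result \cite[Ch.\ IV, Thm.\ 1.2.1 and (1.2.6)]{Gros} on the higher direct images of sheaves of relative differentials under a blow-up with smooth center (namely $\Omega^p_{fh}\xrightarrow{\sim}R^0q_*\Omega^p_{fhq}$, $R^kq_*\Omega^p_{fhq}\xrightarrow{\sim}i_*R^kq'_*\Omega^p_{fhiq'}$ for $k\geq 1$, and $\Omega^p_{fhi}\xrightarrow{\sim}R^0q'_*\Omega^p_{fhiq'}$) one gets, for every $p$, the identity $q_*[\Omega^p_{fhq}]-i_*q'_*[\Omega^p_{fhiq'}]=[\Omega^p_{fh}]-i_*[\Omega^p_{fhi}]$ in $\bK_{alg}(X'\xrightarrow{fh}Y)$, and hence the vanishing of $\La_y^{mot}$ on the blow-up generator after applying $h_*$. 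This cohomological computation is the real content of part (ii), works in arbitrary characteristic, and is what you would need to supply in place of the sketched reduction.
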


\begin{cor}\label{cor-Grothendieck} We have the following commutative diagrams of Grothendieck transformations:
\begin{enumerate}
\item[(i)] $$\xymatrix{
&   \bK_0(\m V_k^{qp} / - )  \ar [dl]_{mC_0} \ar [dr]^{{T_{0}}} \\
{\bK_{alg}( - ) } \ar [rr] _{\tau}& &  \bH(-) \otimes \bQ.}$$
\item[(ii)] $$\xymatrix{
&   \bK_0(\m V_k^{qp} / - )  \ar [dl]_{\epsilon} \ar [dr]^{{T_{-1}}} \\
{\tilde{\bF}( - ) } \ar [rr] _{\gamma}& &  CH(-) \otimes \bQ,}$$
if $k$ is of characteristic zero. Here $\epsilon$ is the unique Grothendieck transformation
satisfying the normalization condition $\epsilon\Bigl(\Bigl[[X  \xrightarrow{\op {id}_X}  X]\Bigr]\Bigr)=\jeden_f$ for a smooth morphism $f: X \to Y$. And similarly for the bivariant Chern class transformation
$\gamma: \bF( - ) \to A^{PI}( - )\otimes \bQ \supset  CH(-) \otimes \bQ$ in case $k=\bC$.
\item[(iii)] Assume $k$ is of characteristic zero. Then the associated covariant transformations in Theorem \ref{thm:main} (ii) and (iii) agree 
under the identification 
$$\bK_0(\m V_k^{qp}/X\to pt) \simeq K_0(\m V_k^{qp}/X)$$
with  the motivic Chern and Hirzebruch class transformations $mC_y$ and ${T_y}_*$.
\end{enumerate}
\end{cor}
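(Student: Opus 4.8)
The plan is to deduce all three parts from the uniqueness principle behind Theorems~\ref{thm:univ} and~\ref{thm:main}: every generator $[V \xrightarrow{h} X]$ of $\bM(\m V/X \xrightarrow{f} Y)$ is the proper pushforward $h_*\bigl(\theta(f\circ h)\bigr)$ of the stable orientation of the smooth morphism $f\circ h$, so a Grothendieck transformation out of $\bM(\m V/-)$, and hence out of the quotient $\bK_0(\m V/-)$, is determined by its values on the classes $\bigl[[X \xrightarrow{\op{id}_X} X]\bigr]$ attached to smooth morphisms $f:X\to Y$. Thus, to identify two Grothendieck transformations out of $\bK_0(\m V^{qp}_k/-)$ it suffices to compare their normalization conditions. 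For part~(i) I would just unwind the definition: by Theorem~\ref{thm:main}(iii) the transformation $T_y$ is $\tau\circ\La_y^{mot}$ renormalized by $\cdot(1+y)^i$ on $\bH^i(-)\otimes\bQ[y]$; at $y=0$ this renormalization is the identity, so $T_0=\tau\circ\La_0^{mot}=\tau\circ mC_0$, which is precisely the commutativity of the triangle in~(i).

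For part~(ii), since $T^*_{-1}(E)=c(E)$, specializing Theorem~\ref{thm:main}(iii) at $y=-1$ with $\bH=CH$ characterizes $T_{-1}\colon\bK_0(\m V^{qp}_k/-)\to CH(-)\otimes\bQ$ as the unique Grothendieck transformation sending $\bigl[[X\xrightarrow{\op{id}_X}X]\bigr]$ to $c(T_f)\bullet\theta(f)$ for $f$ smooth, $\theta(f)$ being the canonical orientation in $CH$. On the other hand $\epsilon$ sends this class to $\jeden_f=\theta_{\tilde{\bF}}(f)$, and Ernstr\"om--Yokura's bivariant Chern class transformation $\gamma\colon\tilde{\bF}(-)\to CH(-)\otimes\bQ$ satisfies the bivariant Verdier--Riemann--Roch normalization $\gamma(\jeden_f)=c(T_f)\bullet\theta(f)$ for smooth $f$ (over $Y=pt$ this is just $c_*(\jeden_V)=c(TV)\cap[V]$ for $V$ smooth). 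So $\gamma\circ\epsilon$ is a Grothendieck transformation out of $\bK_0(\m V^{qp}_k/-)$ with the same normalization as $T_{-1}$, and the uniqueness above forces $T_{-1}=\gamma\circ\epsilon$. I would treat the $k=\bC$ assertion for $\gamma\colon\bF(-)\to A^{PI}(-)\otimes\bQ$ identically, using instead the Grothendieck transformation $\bK_0(\m V^{qp}_{\bC}/-)\to\bF(-)$ with $\bigl[[\op{id}_X]\bigr]\mapsto\jeden_f$ and the corresponding normalization of Ernstr\"om--Yokura's $\gamma$ on $\bF$.

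For part~(iii) I would work over $Y=pt$ with $k$ of characteristic zero, where Bittner's theorem identifies $\bK_0(\m V^{qp}_k/X\to pt)$ with $K_0(\m V^{qp}_k/X)$, carrying $\bigl[[V\xrightarrow{h}X]\bigr]$ to $[V\xrightarrow{h}X]$. Both groups are generated by such classes with $V$ smooth and $h$ proper, and a Grothendieck transformation restricts over $pt$ to a transformation natural for proper pushforward; writing $\bigl[[V\xrightarrow{h}X]\bigr]=h_*\theta(V\to pt)$ reduces the comparison to the case $X=V$ smooth, $h=\op{id}_V$. There, using $\bK_{alg}(V\to pt)=G_0(V)$ with $\theta(V\to pt)=[\m O_V]$ and $\bH(V\to pt)=H_*(V)$ with $\theta(V\to pt)=[V]$, the normalization conditions of Theorem~\ref{thm:main}(ii),(iii) become $\La_y^{mot}(\theta(V\to pt))=\lambda_y(T^*V)\cap[\m O_V]$ and $T_y(\theta(V\to pt))=T^*_y(TV)\cap[V]$ — which are exactly the defining normalizations $mC_y([\op{id}_V])$ and $T_{y*}([\op{id}_V])$ of the motivic Chern and Hirzebruch classes of \cite{BSY1} (the $(1+y)$-renormalization used to define $T_y$ being designed to restrict over $pt$ to the one used for $T_{y*}$ there). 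Since $mC_y$ and $T_{y*}$ are themselves pinned down on $K_0(\m V^{qp}_k/X)$ by functoriality together with these normalizations, agreement on generators propagates, and the covariant restrictions of $\La_y^{mot}$ and $T_y$ coincide with $mC_y$ and $T_{y*}$.

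The hard part will be the input to~(ii): one must ensure that the normalization $\gamma(\jeden_f)=c(T_f)\bullet\theta(f)$ on the distinguished elements $\jeden_f$ of smooth morphisms is genuinely available for the bivariant Chern class transformations of \cite{EY1, EY2} — i.e. that those constructions exhibit the expected bivariant Verdier--Riemann--Roch behaviour, for both $\tilde{\bF}\to CH$ and $\bF\to A^{PI}$ — and, correspondingly, that the universal Grothendieck transformation $\bM(\m V/-)\to\tilde{\bF}(-)$ of Theorem~\ref{thm:univ} descends to $\bK_0(\m V/-)$, i.e. vanishes on the bivariant blow-up subgroup $\mathbb{BL}(\m V/-)$. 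Parts~(i) and~(iii), by contrast, are essentially formal once the grading and $(1+y)$-renormalization conventions on the bivariant homology theory are fixed.
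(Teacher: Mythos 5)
Your overall plan coincides with the paper's: uniqueness of Grothendieck transformations out of $\bK_0(\m V_k^{qp}/-)$ follows from the normalization condition on $\bigl[[X\xrightarrow{\op{id}_X}X]\bigr]$ for $f:X\to Y$ smooth, and all three parts are deduced by matching normalizations. Parts (i) and (iii) are handled correctly (the $(1+y)$-renormalization at $y=0$ is the identity, and over $Y=pt$ the Bittner identification reduces everything to the case of a smooth $V$, where the bivariant normalizations visibly reproduce the normalizations of \cite{BSY1}). The paper likewise dismisses (i) and (iii) as immediate from the normalizations.

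For part (ii), however, your proposal does not actually close the argument: you explicitly flag as ``things one must ensure'' exactly the three facts the paper has to prove, and those facts carry the real content of the corollary. Concretely, the paper supplies the following, which you leave open. First, the existence of $\epsilon$ on $\bM(\m V/-)$ via Theorem~\ref{thm:univ} requires knowing that $\jeden_f$ lies in the relevant bivariant group: for $\bF$ this is immediate because a smooth morphism satisfies the local Euler condition trivially, but for $\tilde{\bF}$ one must verify Ernstr\"om--Yokura's conditions $(\tilde{\bF}\text{-}1)$ and $(\tilde{\bF}\text{-}2)$, which the paper does via the Verdier--Riemann--Roch theorem for $c_*$ (giving $c_g(\jeden_f)=c(T_{f'})\cap f'^*$) together with the compatibility of $c(T_{f'})\cap$ with flat pullback. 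Second, the descent of $\epsilon$ from $\bM$ to $\bK_0$ requires vanishing on $\mathbb{BL}(\m V/X\xrightarrow{f}Y)$; the paper proves this by the additivity of constructible functions over the open isomorphism $Bl_SX'\setminus E\simeq X'\setminus S$, so that
$$(fhq)_*\jeden_{fhq}-(fhiq')_*\jeden_{fhiq'}=(fh)_*\jeden_{fh}-(fhi)_*\jeden_{fhi}\:.$$
Third, the strong normalization $\gamma(\jeden_f)=c(T_f)\bullet[f]$ is not merely the pointwise fact $c_*(\jeden_V)=c(TV)\cap[V]$ over $Y=pt$: by the definition of the bivariant Chow group in \cite[p.325--326]{Fulton-book}, it amounts to $c_g(\jeden_f)=c(T_{f'})\cap f'^*$ for \emph{all} base changes $g$, which is precisely the Verdier--Riemann--Roch statement just invoked, not a formal consequence of the universal property. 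Your proposal correctly identifies the shape of the proof but treats these nontrivial verifications as external inputs rather than establishing them, so as written it has a genuine gap at the core of part~(ii).
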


Let us finish this introduction with some problems left open:

\begin{enumerate}
\item Our construction of the Grothendieck transformation 
$$mC_y=\La_y^{mot}: \bK_0(\m V_k^{qp}/ - ) \to \bK_{alg}( - )\otimes \bZ[y]$$
based on \cite[Chapter IV, Theorem 1.2.1 and (1.2.6)]{Gros} also works in the algebraic context without considering only quasi-projective varieties,
if one uses the more sophisticated definition of $\bK_{alg}(X \xrightarrow {f} Y)=K_0(D^b_{f-perf}(X))$ as the Grothendieck goup of the triangulated category of $f$-perfect complexes. And a similar definition can also be used in the context of compact complex analytic varieties (cf.  \cite[Part I, \S 10.10]{Fulton-MacPherson} and \cite{Levy2}). Then it seems reasonable, that one can also construct in a similar way in this compact complex analytic context
the Grothendieck transformation $mC_y=\La_y^{mot}$.
Here it would be enough to prove the analogues of \cite[Chapter IV, Theorem 1.2.1 and (1.2.6)] {Gros} in the complex analytic context.

\item Similarly one would like to further construct in this compact complex analytic context also the Grothendieck transformation
$T_y$ based on Levy's  K-theoretical Riemann-Roch transformation $\alpha : \bK_{alg}(-)\to \bK^{top}_0(-)$ from algebraic to topological
bivariant K-theory (see  \cite{Levy2}).
 A key result missing so far is the counterpart 
 $$\alpha(\m O_f)=\theta(f)$$ 
 of \cite[Part II, Theorem 1.4 (3)]{Fulton-MacPherson},
that $\alpha$ identifies for a smooth morphism $f: X\to Y$ the orientation $\m O_f:=[\m O_X]\in \bK_{alg}(X \xrightarrow {f} Y)$ with the
 orientation  $\theta(f)\in \bK^{top}_0(X \xrightarrow {f} Y).$

\item We do not know if Brasselet's bivariant Chern class transformation $\ga: \bF(-) \to \bH(-)$ (see \cite{Brasselet})
satisfies for a smooth morphism $f:X \to Y$ the ``strong normalization condition'' 
$$\ga(\jeden_f) = c(T_f) \bullet \theta(f) \in \bH(X  \xrightarrow{f}  Y).$$
Then Corollary \ref{cor-Grothendieck} (ii) would also be true for Brasselet's bivariant Chern class transformation $\ga: \bF(-) \to \bH(-)$.
\item In a future work we will construct in the compact complex algebraic or analytic context a bivariant analogue $\bB\Omega(-)$
of the cobordism group $\Omega(-)$ of selfdual constructible sheaf complexes, together with a Grothendieck transformation
$sd: \bK_0(\m V/ - ) \to \bB\Omega(-)$. This will be based on suitable Witt-groups of constructible sheaves and some other related topics different from the theme of the present paper. But what is still missing to get the counterpart of Corollary \ref{cor-Grothendieck} (i) and (ii) for $y=1$ is a bivariant
 $L$-class transformation $\bB L: \bB\Omega(-) \to \bH(-)\otimes \bQ$.
\end{enumerate}
 
\begin{rem} Our 
debt to the works of Quillen and Fulton-MacPherson 
should be clear after reading this introduction.
In this paper we focus in the last sections on the unification of different bivariant theories of characteristic classes of singular spaces,
by dividing out our universal bivariant theory by a ``bivariant blow-up relation". 
But similar ideas (with other bivariant relations) should also work for other applications, e.g. in the algebraic geometric context for the construction of a ``geometric" 
bivariant-theoretic version of Levine--Morel's algebraic cobordism (different from the ``operational" vivariant theory of \cite{GK}, e.g. see \cite{Yokura-obt, SY2}).
Similarly, in \cite{BaSY} we will construct in the context of reduced differentiable spaces  a ``geometric" 
bivariant-theoretic version of Quillen's complex cobordism (different from the abstract definition given by the general theory of 
Fulton-MacPherson \cite{Fulton-MacPherson}),
and closely related to the approach of  Emerson-Meyer \cite{EM2, EM3}
 to ``(bivariant) $KK$-theory via correspondences". The corresponding cohomology theory for smooth manifolds will be different and a refinement
 of Quillen's geometric theory of complex cobordism \cite{Quillen}.
\end{rem}


\section {Fulton--MacPherson's bivariant theory}\label{FM-BT}

For the sake of the reader we quickly recall some basic ingredients of Fulton--MacPher- son's bivariant theory \cite{Fulton-MacPherson}. \\

Let $\Cal V$ be a category which has a final object $pt$ and on which the fiber product or fiber square is well-defined, e.g. the category
$\m V^{(qp)}_k$ of (quasi-projective) algebraic varieties 
(i.e. reduced separated schemes of finite type) over a base field $k$, or  $\m V^{an}_{(c)}$ the category of (compact) reduced complex analytic spaces.
 We also consider a class of maps, called ``confined maps" (e.g., proper maps in this algebraic or analytic geometric context), which are closed under composition and base change and contain all the identity maps. Finally, one fixes a class of fiber squares, called ``independent squares" (or ``confined squares", e.g., ``Tor-independent" in algebraic geometry, a fiber square with some extra conditions required on morphisms of the square), which satisfy the following properties:

(i) if the two inside squares in  

$$\CD
X''@> {h'} >> X' @> {g'} >> X \\
@VV {f''}V @VV {f'}V @VV {f}V\\
Y''@>> {h} > Y' @>> {g} > Y \endCD
$$

or

$$\CD
X' @>> {h''} > X \\
@V {f'}VV @VV {f}V\\
Y' @>> {h'} > Y \\
@V {g'}VV @VV {g}V \\
Z'  @>> {h} > Z \endCD
$$

are independent, then the outside square is also independent.

(ii) any square of the following forms are independent:
$$
\xymatrix{X \ar[d]_{f} \ar[r]^{\op {id}_X}&  X \ar[d]^f & & X \ar[d]_{\op {id}_X} \ar[r]^f & Y \ar[d]^{\op {id}_Y} \\
Y \ar[r]_{\op {id}_X}  & Y && X \ar[r]_f & Y}
$$
where $f:X \to Y$ is any morphism. \\

A bivariant theory $\bB$ on a category $\Cal V$ with values in the category of (graded) abelian groups is an assignment to each morphism
$$ X  \xrightarrow{f} Y$$
in the category $\Cal V$ a (graded) abelian group (in most cases we can ignore a possible grading)

$$\bB(X  \xrightarrow{f} Y)$$
which is equipped with the following three basic operations. The $i$-th component of $\bB(X  \xrightarrow{f} Y)$, $i \in \bZ$, is denoted by $\bB^i(X  \xrightarrow{f} Y)$
(with $\bB(X  \xrightarrow{f} Y)=:\bB^0(X  \xrightarrow{f} Y)$ in the ungraded context).\\

{\bf Product operations}: For morphisms $f: X \to Y$ and $g: Y
\to Z$, the ($\bZ$-bilinear) product operation
$$\bullet: \bB^i( X  \xrightarrow{f}  Y) \otimes \bB^j( Y  \xrightarrow{g}  Z) \to
\bB^{i+j}( X  \xrightarrow{gf}  Z)$$
is  defined.

{\bf Pushforward operations}: For morphisms $f: X \to Y$
and $g: Y \to Z$ with $f$ \emph {confined}, the ($\bZ$-linear) pushforward operation
$$f_*: \bB^i( X  \xrightarrow{gf} Z) \to \bB^i( Y  \xrightarrow{g}  Z) $$
is  defined.

{\bf Pullback operations}: For an \emph{independent} square
$$\CD
X' @> g' >> X \\
@V f' VV @VV f V\\
Y' @>> g > Y, \endCD
$$

the ($\bZ$-linear) pullback operation
$$g^* : \bB^i( X  \xrightarrow{f} Y) \to \bB^i( X'  \xrightarrow{f'} Y') $$
is  defined.\\

And these three operations are required to satisfy the seven compatibility axioms (see \cite [Part I, \S 2.2]{Fulton-MacPherson} for details):
\begin{enumerate}
\item[(B-1)] product is associative, 

\item[(B-2)] pushforward is functorial,

\item[(B-3)]  pullback is functorial, 

\item[(B-4)]  product and pushforward commute,

\item[(B-5)]  product and pullback commute, 

\item[(B-6)]  pushforward and pullback commute, and 

\item[(B-7)] projection formula. \\
\end{enumerate}

We also assume that $\bB$ has \emph{units}, i.e., there is an element $1_X \in \bB^0( X  \xrightarrow{\op {id}_X} X)$ such that $\alp \bullet 1_X = \alp$ for all morphisms $W \to X$ and $\alp \in \bB(W \to X)$; such that $1_X \bullet \beta = \beta $ for all morphisms $X \to Y$ and $\beta \in \bB(X \to Y)$; and such that $g^*1_X = 1_{X'}$ for all $g: X' \to X$. \\

Let $\bB, \bB'$ be two bivariant theories on the category $\Cal V$. Then
a {\it Grothendieck transformation} from $\bB$ to $\bB'$
$$\ga : \bB \to \bB'$$
is a collection of group homomorphisms
$$\bB(X \to Y) \to \bB'(X \to Y) $$
for all morphisms $X \to Y$ in the category $\Cal V$, which preserves the above three basic operations (as well as the units, but not necessarily possible gradings): 
\begin{enumerate}
\item[(i)] \quad $\ga (\alp \bullet_{\bB} \be) = \ga (\alp) \bullet _{\bB'} \ga (\be)$, 

\item[(ii)] \quad $\ga(f_{*}\alp) = f_*\ga (\alp)$, and 

\item[(iii)] \quad $\ga (g^* \alp) = g^* \ga (\alp)$. \\
\end{enumerate}
 
Most of our bivariant theories in this paper are \emph{commutative}
(see \cite [\S 2.2]{Fulton-MacPherson}), i.e., if whenever both

$$
\xymatrix{W \ar[d]_{f'} \ar[r]^{g'}&  X \ar[d]^f && W \ar[d]_{g'} \ar[r]^{f'}& Y \ar[d]^{g} \\
Y \ar[r]_{g}  & Z && X \ar[r]_f & Z}
$$
are independent squares, then for 
$\alp \in \bB(X  \xrightarrow {f} Z)$ and $\be \in \bB(Y  \xrightarrow {g} Z)$
$$g^*(\alp) \bullet \be = f^*(\be) \bullet \alp.$$

This is for example the case for all bivariant theories mentioned in the introduction in the algebraic or analytic geometric context, except for the bivariant operational Chow group $CH$, with bivariant algebraic K-theory
$\bK_{alg}$ and bivariant constructible functions $\bF, \tilde{\bF}$ examples of ungraded theories. Here $CH$ is at least commutative in the context of a base field $k$ of characteristic
zero, by \cite[Example 17.4.4] {Fulton-book} (using resolution of singularities).
Similarly the bivariant homology $\bH$ is commutative,
if we restrict ourselves to the even degree part only 
(otherwise $\bH$ would be \emph{skew-commutative}, i.e. $g^*(\alp) \bullet \be = (-1)^{\op {deg}(\alp) \op{deg}(\be)} f^*(\be) \bullet \alp$ in the situation above).\\

$\bB_*(X):= \bB(X \to pt)$ becomes a covariant functor for {\it confined}  morphisms and 
$\bB^*(X) := \bB(X  \xrightarrow{id}  X)$ becomes a contravariant ring valued functor for {\it any} morphisms, with $\bB_*(X)$ a left $\bB^*(X)$-module under the product
$\cap:=\bullet: \bB^*(X)\otimes \bB_*(X)\to \bB_*(X)$.
As to a possible grading, one sets  $\bB_i(X):= \bB^{-i}(X  \to  {pt})$ and $\bB^j(X):= \bB^j(X  \xrightarrow{id}  X)$ so that $\bB^*(X)$ becomes a graded ring with
$\cap: \bB^j(X)\otimes \bB_i(X)\to \bB_{i-j}(X)$.\\

The following notion of an \emph{orientation} makes $\bB_*$ a contravariant functor and $\bB^*$ a covariant functor with the corresponding Gysin (or transfer) homomorphisms:

\begin{defn}\label{canonical}(\cite[ Part I, Definition 2.6.2]{Fulton-MacPherson}) Let $\Cal S$ be a class of maps in $\Cal V$, which is closed under compositions and contains all identity maps. Suppose that to each $f: X \to Y$ in $\Cal S$ there is assigned an element
$\theta(f) \in \bB(X  \xrightarrow {f} Y)$ satisfying that
\begin{enumerate}
\item [(i)] $\theta (g \circ f) = \theta(f) \bullet \theta(g)$ for all $f:X \to Y$, $g: Y \to Z \in \Cal S$ and

\item [(ii)] $\theta(\op {id}_X) = 1_X $ for all $X$ with $1_X \in \bB^*(X):= 
\bB(X  \xrightarrow{\op {id}_X} X)$ the unit element.
\end{enumerate}
Then $\theta(f)$ is called  an {\it orientation} of $f$.  If we need to refer to which bivariant theory we consider, we denote $\theta_{\bB}(f)$ instead of the simple notation $\theta (f)$. 
\end{defn} 
\begin{rem} Since there can be different choices of such orientations (
e.g., compare with our ``twisting'' construction later on),
we prefer to call the above $\theta$ simply an \emph{orientation} for what is called 
a ``canonical orientation'' in \cite{Fulton-MacPherson}. If we want to emphasize the class $\Cal S$, it is called an \emph{$\Cal S$-orientation}, 
and if we want to emphasize the bivariant theory $\bB$ as well, it is called a \emph{$\bB$-valued $\Cal S$-orientation}.
\end{rem}

For example the class $\Cal S$ of {\em smooth} morphisms in the algebraic or analytic geometric context has 
orientations for all the bivariant theories mentioned in the introduction,
with all cartesian squares independent.

\begin{pro} For the composite $X  \xrightarrow{f}  Y \xrightarrow{g}  Z$,   if $f \in \m S$ has 
an orientation $\theta_{\bB}(f)$, then we have the Gysin homomorphism (or transfer) defined by $f^!(\alp) :=\theta(f) \bullet \alp$:
$$f^!: \bB(Y  \xrightarrow{g}  Z) \to \bB(X \xrightarrow{gf}  Z),$$
which is functorial, i.e., $(gf)^! =f^! g^!$ and $id^!=id$. In particular, when $Z = pt$, we have the Gysin homomorphism:
$$f^!: \bB_*(Y) \to \bB_*(X).$$
\end{pro}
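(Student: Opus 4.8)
The plan is to verify the two asserted properties of $f^!$ directly from the bivariant axioms, using the definition $f^!(\alpha) := \theta(f)\bullet\alpha$. First I would check functoriality of $f^!$. Given the composite $X\xrightarrow{f}Y\xrightarrow{g}Z\xrightarrow{h}W$ with $f,g\in\m S$ admitting orientations, and an element $\alpha\in\bB(Z\xrightarrow{h}W)$, I would compute
$$f^!(g^!(\alpha)) = \theta(f)\bullet\bigl(\theta(g)\bullet\alpha\bigr).$$
By axiom (B-1), the associativity of the bivariant product, this equals $\bigl(\theta(f)\bullet\theta(g)\bigr)\bullet\alpha$. Now the orientation property (i) in Definition \ref{canonical} gives $\theta(f)\bullet\theta(g) = \theta(g\circ f)$, so the expression becomes $\theta(gf)\bullet\alpha = (gf)^!(\alpha)$. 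This proves $(gf)^! = f^!\circ g^!$. For the identity statement, property (ii) in Definition \ref{canonical} gives $\theta(\op{id}_X)=1_X$, and then $id^!(\alpha) = 1_X\bullet\alpha = \alpha$ by the unit axiom for $\bB$, so $id^!=id$.

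Next I would record that $f^!$ is well-defined as a map $\bB(Y\xrightarrow{g}Z)\to\bB(X\xrightarrow{gf}Z)$: this is immediate from the typing of the product operation, since $\theta(f)\in\bB(X\xrightarrow{f}Y)$ and $\alpha\in\bB(Y\xrightarrow{g}Z)$ have product in $\bB(X\xrightarrow{gf}Z)$, and $\bZ$-bilinearity of $\bullet$ makes $f^!$ a group homomorphism. Finally, specializing to $Z=pt$ (the final object), the target functor $\bB_*$ is by definition $\bB_*(-)=\bB(-\to pt)$, so for $g:Y\to pt$ the above yields $f^!:\bB_*(Y)\to\bB_*(X)$, as claimed.

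There is essentially no obstacle here: the proposition is a formal consequence of axiom (B-1) together with the two defining properties of an orientation, and the only thing to be careful about is matching up the source and target groups correctly (which morphism plays the role of $g$ in each application). The contravariant functoriality of $\bB_*$ under $\m S$-morphisms, and the covariant functoriality of $\bB^*$ (the case $g=\op{id}$), then follow by the same computation, so no separate argument is needed for those.
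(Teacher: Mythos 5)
Your proposal is correct and is the standard (essentially unique) verification: the paper itself states this proposition without proof, as part of the review of Fulton--MacPherson's bivariant formalism in \S 2, and the argument you give — associativity axiom (B-1) plus the two defining properties of an orientation plus the unit axiom — is exactly what is required and what Fulton--MacPherson's original treatment amounts to. The computation $(gf)^!(\alpha)=\theta(gf)\bullet\alpha=(\theta(f)\bullet\theta(g))\bullet\alpha=\theta(f)\bullet(\theta(g)\bullet\alpha)=f^!(g^!(\alpha))$ is the right one, and your typing check for source and target groups is the only other point that needs saying.
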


\begin{pro} For an independent square 
$$\CD
X' @> g' >> X \\
@V f' VV @VV {f}V\\
Y'@>> g > Y, \endCD
$$
if $g \in \m C \cap \m S$ and $g$ has 
an orientation $\theta_{\bB}(g)$, then we have the Gysin homomorphism defined by $g_!(\alp) :=g'_*(\alp \bullet \theta(g))$:
$$g_!: \bB(X'  \xrightarrow{f'}  Y') \to \bB(X \xrightarrow{f}  Y),$$
which is functorial, i.e., $(gf)_! = g_! f_!$ and $id_!=id$. In particular, for an independent square 
$$\CD
X @> f >> Y \\
@V {\op {id}_X} VV @VV {\op {id}_Y}V\\
X @>> f > Y, \endCD
$$
with $f \in \m C \cap \m S$, we have the Gysin homomorphism:
$$f_!: \bB^*(X) \to \bB^*(Y).$$ 
\end{pro}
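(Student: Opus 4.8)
The plan is to verify the three assertions — that $g_!$ is well defined, that it is functorial, and that the displayed special case follows — by a purely formal computation using only the bivariant axioms (B-1), (B-2), (B-4), the orientation identity from Definition~\ref{canonical}, and the stability of confined maps under base change.

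First, well-definedness. In the given square $f\circ g'=g\circ f'$, so for $\alpha\in\bB(X'\xrightarrow{f'}Y')$ the product $\alpha\bullet\theta(g)$ lies in $\bB(X'\xrightarrow{gf'}Y)=\bB(X'\xrightarrow{fg'}Y)$. Since $g\in\m C$ and confined maps are closed under base change, $g'$ is confined, so the pushforward $g'_*$ is defined and $g_!(\alpha)=g'_*(\alpha\bullet\theta(g))\in\bB(X\xrightarrow{f}Y)$, as claimed. (Note that independence of the square is not even needed here, only that $g'$ is a confined base change of $g$.)

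Next, functoriality. Take composable morphisms $g_1:Z\to Y'$ and $g_2:Y'\to Y$ in $\m C\cap\m S$ with orientations $\theta(g_1),\theta(g_2)$, and a base morphism $\phi:X\to Y$. Forming the tower of two fiber squares with bottom row $Z\xrightarrow{g_1}Y'\xrightarrow{g_2}Y$ and right vertical $\phi$, we obtain $g_1':X_2\to X_1$, $g_2':X_1\to X$ (both confined, being base changes of $g_1,g_2$) and structure maps $f_1,f_2$; by property (i) of independent squares the outer square is independent and is the pullback of $g_2\circ g_1$ along $\phi$, while $g_2\circ g_1\in\m C\cap\m S$ with orientation $\theta(g_1)\bullet\theta(g_2)=\theta(g_2\circ g_1)$. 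For $\alpha\in\bB(X_2\xrightarrow{f_2}Z)$ one computes
$$(g_2)_!\big((g_1)_!(\alpha)\big)=(g_2')_*\big((g_1')_*(\alpha\bullet\theta(g_1))\bullet\theta(g_2)\big)=(g_2')_*(g_1')_*\big((\alpha\bullet\theta(g_1))\bullet\theta(g_2)\big),$$
where the second equality uses (B-4) to move the confined pushforward $(g_1')_*$ outside the product. Now (B-1) together with $\theta(g_1)\bullet\theta(g_2)=\theta(g_2\circ g_1)$ rewrites the argument as $\alpha\bullet\theta(g_2\circ g_1)$, and (B-2) gives $(g_2')_*(g_1')_*=(g_2'g_1')_*$, so the whole expression equals $(g_2\circ g_1)_!(\alpha)$. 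Finally, for $g=\op{id}_Y$ the pullback square has $g'=\op{id}_X$ and $\theta(\op{id}_Y)=1_Y$, so $g_!(\alpha)=(\op{id}_X)_*(\alpha\bullet 1_Y)=\alpha$ by the unit axiom and $\op{id}_*=\op{id}$ from (B-2).

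For the displayed ``in particular'' statement, one applies the construction to the independent square of the second type in property (ii) of independent squares (rows equal to $f$, columns equal to the identities), taking the confined-and-smooth oriented map to be $f$ itself; since $\bB(X\xrightarrow{\op{id}_X}X)=\bB^*(X)$ and $\bB(Y\xrightarrow{\op{id}_Y}Y)=\bB^*(Y)$, the formula $f_!(\alpha)=f_*(\alpha\bullet\theta(f))$ gives the asserted Gysin map $\bB^*(X)\to\bB^*(Y)$, and its functoriality is the special case of the functoriality just proved. The computation is entirely mechanical; the only point requiring care is keeping track of which bivariant group each element lives in, so that (B-4) and (B-2) are invoked with the correct confined map and composite — there is no genuine obstacle.
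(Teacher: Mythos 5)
Your proof is correct, and it is the standard argument from Fulton--MacPherson's bivariant framework; the paper states this proposition without proof, treating it as a known fact from \cite{Fulton-MacPherson}, so there is no divergence in approach to report. Your bookkeeping of which group each element lives in, the use of (B-4) to pull $(g_1')_*$ outside the product, (B-1) and $\theta(g_1)\bullet\theta(g_2)=\theta(g_2 g_1)$ to reassemble the orientation, and (B-2) to compose pushforwards are exactly what is needed; your parenthetical remark that independence of the square is not actually used for well-definedness or functoriality (only confinedness of the base-changed map and stability of $\m C$ under base change) is also accurate, and the specialization to the square with identity verticals giving $f_!\colon\bB^*(X)\to\bB^*(Y)$ is handled correctly.
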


The symbols $f^!$ and $g_!$ should carry the information of $\Cal S$ and the 
orientation $\theta$, but it will be usually omitted if it is not necessary to be mentioned. \\

Suppose that we have  a Grothendieck transformation $\ga: \bB \to \bB'$ of two bivariant theories  $\bB, \bB'$.
This induces natural transformations $\ga_*: \bB_* \to \bB_*'$ and $\ga^*: \bB^* \to {\bB'}^*$, i.e., we have the following commutative diagrams:\\
For any morphism $f: X \to Y$ we have the commutative diagram
$$\CD
\bB^*(X) @> \ga^* >> \bB'^*(X) \\
@V {f^*} VV @VV {f^*}V\\
\bB^*(Y) @>> \ga^*> \bB'^*(Y) . \endCD
$$
For a confined morphism $f:X \to Y$ we have the  commutative diagram
$$\CD
\bB_*(X) @> \ga_* >> \bB'_*(X) \\
@V {f_*} VV @VV {f_*}V\\
\bB_*(Y) @>> \ga_*> \bB'_*(Y) . \endCD 
$$ 
And these are related by the \emph{module property} 
$$\ga_*(\beta\cap \alpha) = \ga^*(\beta) \cap \ga_*(\alpha) \quad \text{for all}
\quad \beta\in \bB^*(X), \alpha \in \bB_*(X).$$

Assume now that $f: X\to Y$ has 
an orientation for both bivariant theories.
Then a bivariant element $u_f \in \bB'^*(X) = \bB'(X \xrightarrow {\op {id}_X} X)$ with
$$\ga (\theta_{\bB}(f)) = u_f \bullet \theta_{\bB'}(f)$$
 is called a \emph{Riemann--Roch formula} (see \cite{Fulton-MacPherson}) 
 comparing these 
 orientations with respect to the bivariant theories $\bB, \bB'$. 
Such a Riemann--Roch formula  gives rise to the following (wrong-way) commutative diagrams with respect to the above two Gysin homomorphisms $f_!, f^!$ :
$$ 
\begin{CD}
\bB^*(X) @> \ga^* >> \bB'^*(X) \\
@V {f_!} VV @VV {f_!( \;-\; \bullet u_f)}V\\
\bB^*(Y) @>> \ga^*> \bB'^*(Y) . 
\end{CD}
\hspace{2cm} \begin{CD}
\bB_*(Y) @> \ga_* >> \bB'_*(Y) \\
@V {f^!} VV @VV {u_f \bullet f^!}V\\
\bB_*(X) @>> \ga_*> \bB'_*(X) . 
\end{CD}$$

The most important and motivating example of such a Grothendieck transformation is Baum--Fulton--MacPherson's bivariant \emph{Riemann--Roch transformation} (\cite [Part II]{Fulton-MacPherson}):
$$\tau: \bK_{alg} \to \bH\otimes \bQ\:,$$ 
or its algebraic counterpart of \cite[Example 18.3.19]{Fulton-book}. Here $\m V=\m V^{qp}_{k}$
is the category of quasi-projective varieties over a base field $k$ of any characteristic,
with  $\bH=CH$ the bivariant operational Chow groups, or $\bH$
the even degree bivariant homology in case $k=\bC$. The independent squares in this context are the \emph{Tor-independent} fiber squares.
 $\bK_{alg}$ is the bivariant algebraic K-theory of relative perfect complexes, 
so that ${\bK_{alg}}_*(X) = K_0(X)$ is the Grothendieck group of coherent sheaves and ${\bK_{alg}}^*(X) = K^0(X)$ is the Grothendieck group of algebraic vector bundles. 
The associated contravariant transformation is the 
\emph{Chern character} 
$$\tau^*=ch: K^0(X) \to H^*(X)\otimes \bQ,$$
and the associated covariant transformation is the
\emph{Todd class transformation}
 $$\tau_*=td_*: G_0(X)\to H_*(X)\otimes \bQ,$$
which is functorial for proper morphisms $f:X \to Y$. Moreover, they
are related by the \emph{module property} 
 \begin{equation}\label{module}
 td_*(\beta\cap \alpha) = ch^*(\beta) \cap td_*(\alpha) \quad \text{for all}
\quad \beta\in K^0(X), \alpha \in G_0(X).
\end{equation}

This generalizes the original Grothendieck--Riemann--Roch Theorem and Hirzebruch--Riemann--Roch Theorem.
Both bivariant theories $\bK_{alg}$ and $H_*(-)\otimes \bQ$ are 
oriented for the
class  $\Cal S$ of smooth (or more generally of local complete intersection) morphism,
with $\theta_{\bK}(f)= \m O_f:=[\m O_X]\in \bK_{alg}(X \xrightarrow {f} Y)$ the class of the structure sheaf, and $\theta_{\bH}(f)=[f]\in \bH(X \xrightarrow {f} Y)$ the corresponding
``relative fundamental class''. And these are
related by the \emph{Riemann--Roch formula} 
\begin{equation}\label{RR-formula} \tau(\m O_f) = td(T_f)\bullet [f] \:,
\end{equation}
with $u_f:=td(T_f)\in H^*(X)\otimes \bQ$ (compare with \cite [(*) on p.124]{Fulton-MacPherson}
for  $\bH$ the bivariant homology in case $k=\bC$. For $\bH=CH$ the bivariant Chow group and
$k$ of any characteristic, this follows from \cite[Theorem 18.2]{Fulton-book} as we explain in the last section of our paper).
Here $T_f$ is the (virtual) tangent bundle of $f$. This implies the following two results:\\\\
\underline {SGA 6-Riemann--Roch Theorem}: The following diagram commutes
for a proper smooth morphism $f: X \to Y$:
\begin{equation}\CD
K(X) @> ch >> H^*(X)\otimes \bQ \\
@V {f_!} VV @VV {f_!(td(T_f) \cup \;-\;) }V\\
K(Y) @>> ch> H^*(Y)\otimes \bQ . \endCD
\end{equation}
\underline {Verdier--Riemann--Roch Theorem}: 
 The following diagram commutes
for a smooth morphism $f: X \to Y$: 
\begin{equation}\CD
G_0(Y) @> td_* >> H_*(Y)\otimes \bQ \\
@V {f^!} VV @VV {td(T_f)\cap f^!}V\\
G_0(X) @>> td_* > H_*(X)\otimes \bQ  . \endCD
\end{equation}

Of course both formulae are more generally true for $f$ a local complete intersection morphism,
which is special to the Grothendieck transformation $\tau$. In this paper
 only the case of a smooth morphism will be used, and then similar results are also true
 for the other considered Grothendieck transformations.
It should also be remarked that \emph{one motivation of Fulton--MacPherson's bivariant theory was to unify the above three Riemann--Roch theorems ... } (see \cite[Part II, \S 0.1.4]{Fulton-MacPherson}).

\begin{defn}\label{stable} (i) Let $\Cal S$ be another class of maps in $\Cal V$ , called ``specialized maps" (e.g., smooth maps in algebraic geometry), 
which is closed under composition and under base change and containing all identity maps. Let $\bB$ be a bivariant theory. If $\Cal S$ has 
orientations in $\bB$, then we say that $\Cal S$ is {\it 
$\bB$-oriented }and an element of $\Cal S$ is called a {\it 
$\bB$-oriented} morphism.

(ii) Assume furthermore, that the orientation $\theta$ on $\Cal S$ satisfies for any independent square with $f \in \Cal S$
$$
\CD
X' @> g' >> X\\
@Vf'VV   @VV f V \\
Y' @>> g > Y
\endCD
$$
the  condition
\begin{equation}\label{nice}
\theta (f') = g^* \theta (f)
\end{equation}
(which means that the orientation $\theta$ is preserved under the pullback operation).
Then we call $\theta$ a {\it 
stable orientation} and say that $\Cal S$ is {\it 
stably $\bB$-oriented}. Similarly an element of $\Cal S$ is called a {\it 
stably $\bB$-oriented} morphism.
\end{defn}

Consider for example the class $\Cal S$ of all {\it smooth} morphisms for $\m V=\m V^{(qp)}_{k}$
the category of (quasi-projective) varieties over a base field $k$ of any characteristic,
with all fiber squares as the independent squares. Then this class has a 
stable orientation $\theta$ with respect to $\bK_{alg}$ or $CH$ in any characteristic
(with $\theta(f)=\m O_f$ or $[f]$),
to $\tilde{\bF}$ in characteristic zero (with $\theta(f)=\jeden_f$) and to
$\bF$ or bivariant homology $\bH$ for $k=\bC$ (with $\theta(f)=\jeden_f$ or $[f]$).

\section{A universal bivariant theory on the category of  varieties}

Let $\Cal V$ be the category $\m V=\m V^{(qp)}_{k}$
of (quasi-projective) varieties over a base field $k$ of any characteristic,
or the category $\m V=\m V^{an}_{c}$ of compact reduced analytic spaces,
with all fiber squares as the independent squares. 
As the ``confined" resp. ``specialized" maps we take the class $\Cal Prop $ of {\it proper}
resp. $\Cal Sm$ of {\it smooth} morphisms.

\begin{thm}\label{UBT} We define 
$$\bM(\m V/X  \xrightarrow{f}  Y)$$
to be the free abelian group generated by the set of isomorphism classes of proper morphisms $h: W \to X$  such that the composite of  $h$ and $f$ is a smooth morphism:
$$h \in \Cal Prop  \quad \text {and} \quad f \circ h: W \to Y \in \Cal Sm.$$
Then the association $\bM$ is a \emph{bivariant theory}, if the three operations are defined as follows:

{\bf Product operation}: For morphisms $f: X \to Y$ and $g: Y
\to Z$, the product operation
$$\bullet: \bM (\m V/X  \xrightarrow{f}  Y) \otimes \bM (\m V/ Y  \xrightarrow{g}  Z) \to
\bM ( \m V/X  \xrightarrow{gf}  Z)$$
is  defined for $[V \xrightarrow{p}  X]  \in \bM (\m V/X  \xrightarrow{f}  Y)$ and $[W  \xrightarrow{k}  Y]  \in \bM (\m V/ Y  \xrightarrow{g}  Z)$ by 
$$[V \xrightarrow{p}  X] \bullet [W  \xrightarrow{k}  Y]  :=  [V'  \xrightarrow{p \circ {k}''}  X], $$
and bilinearly extended. Here we consider the following fiber squares
\begin{equation}\label{cd:product}\CD
V' @> {p'} >> X' @> {f'} >> W \\
@V {k''}VV @V {k'}VV @V {k}VV\\
V@>> {p} > X @>> {f} > Y @>> {g} > Z .\endCD
\end{equation}

{\bf Pushforward operation}: For morphisms $f: X \to Y$
and $g: Y \to Z$ with $f \in \m Prop$, the pushforward operation
$$f_*: \bM (\m V/ X  \xrightarrow{gf} Z) \to \bM (\m V/ Y  \xrightarrow{g}  Z) $$
is  defined by
$$f_*([V \xrightarrow{p}  X]) := [V  \xrightarrow{f \circ p}  Y]$$
and linearly extended. 

{\bf Pullback operation}: For an independent square
$$\CD
X' @> g' >> X \\
@V f' VV @VV f V\\
Y' @>> g > Y, \endCD
$$
the pullback operation
$$g^* : \bM (\m V/ X  \xrightarrow{f} Y) \to \bM(\m V/ X'  \xrightarrow{f'} Y') $$
is  defined by
$$g^*([V  \xrightarrow{p}  X] ):=  [V'  \xrightarrow{p'}  X']$$
and linearly extended. Here we consider the following fiber squares:
\begin{equation}\label{cd:pullback}\CD
V' @> g'' >> V \\
@V {p'} VV @VV {p}V\\
X' @> g' >> X \\
@V f' VV @VV f V\\
Y' @>> g > Y. \endCD
\end{equation}
\end{thm}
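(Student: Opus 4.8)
The plan is to verify, one operation at a time, that the three structures defined in the statement are well-defined and then that they satisfy the seven bivariant axioms (B-1) through (B-7), together with the existence of units. First I would check well-definedness. For the product, given $[V\xrightarrow{p}X]\in\bM(\m V/X\xrightarrow{f}Y)$ and $[W\xrightarrow{k}Y]\in\bM(\m V/Y\xrightarrow{g}Z)$, one forms the iterated fiber square \eqref{cd:product}; the morphism $p\circ k''$ is proper because $p$ is proper and $k''$ is a base change of the proper map $k$, so properness is stable; and $g\circ f\circ(p\circ k'')$ equals $(g\circ f\circ p)\circ k''$... better, it equals $s\circ(\cdots)$ where one uses that $f\circ p=s$ is smooth and $g\circ f\circ p\circ k''$ is the composite $X'\to W\to Z$ precomposed appropriately — concretely, $g\circ f\circ p\circ k''$ factors as the smooth base change of $g\circ k$ composed with the smooth $f'\circ p'$, hence is smooth since smoothness is stable under composition and base change. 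For the pushforward $f_*$, properness of $f\circ p$ follows from properness of $f$ and $p$, and $g\circ f\circ p$ is unchanged as a map to $Z$, so smoothness is inherited. For the pullback $g^*$, the double fiber square \eqref{cd:pullback} gives $p'$ proper (base change of proper $p$) and $f'\circ p'=g''{}^*(f\circ p)$ smooth by base change of the smooth $f\circ p$. In all three cases one also checks the construction descends to isomorphism classes, which is immediate from the universal property of fiber products.

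Next I would treat the unit: set $1_X:=[X\xrightarrow{\op{id}_X}X]\in\bM(\m V/X\xrightarrow{\op{id}_X}X)$, which lies in the group since $\op{id}_X$ is proper and $\op{id}_X\circ\op{id}_X=\op{id}_X$ is smooth. The three unit properties ($\alpha\bullet 1_X=\alpha$, $1_X\bullet\beta=\beta$, $g^*1_X=1_{X'}$) all follow because fiber product with an identity map returns the original map up to canonical isomorphism. Then I would verify (B-1) associativity of $\bullet$: given three composable bivariant elements along $X\xrightarrow{f}Y\xrightarrow{g}Z\xrightarrow{e}W$, both bracketings produce, by the pasting lemma for fiber squares, the same iterated fiber product mapping properly to $X$ and smoothly to $W$; the canonical isomorphism identifying the two iterated pullbacks gives the equality of isomorphism classes. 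Axioms (B-2) functoriality of pushforward and (B-3) functoriality of pullback are direct: $(f_2\circ f_1)_*=f_{2*}\circ f_{1*}$ because composition of proper maps post-composes $h$, and $(g_2\circ g_1)^*=g_1^*\circ g_2^*$ again by the pasting lemma for fiber squares, with canonical isomorphisms of iterated base changes. Axioms (B-4) product-pushforward commute, (B-5) product-pullback commute, (B-6) pushforward-pullback commute, and (B-7) the projection formula are each a diagram chase: one writes down the relevant grid of fiber squares, applies the pasting lemma to see that the two ways of building the resulting variety agree, and checks the induced maps to $X$ and to the bottom-right base coincide under the canonical isomorphism. For (B-6) one uses that a pullback square of a pushforward situation is again governed by a cube of fiber squares whose faces are all cartesian.

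The main obstacle — or rather the only genuinely delicate point — is bookkeeping: keeping track of which maps are the "$h$ to $X$" coordinate and which are the auxiliary smooth/proper maps as one pastes fiber squares, and confirming at each stage that properness and smoothness of the relevant composites are preserved under composition and base change (both classes $\m{Prop}$ and $\m{Sm}$ have these closure properties, which is exactly why the theory is well-defined). There is no hard inequality or deep input; the proof is entirely formal, driven by the universal property of fiber products and the pasting lemma. I would organize it so that each axiom is reduced to the statement "two iterated fiber products built from the same grid are canonically isomorphic, compatibly with the projections recorded by the bivariant formalism," and then remark that this is standard. The one thing worth spelling out carefully is the projection formula (B-7), since it mixes a pushforward on one factor with a product, and one must check the proper map being pushed forward is correctly identified as a base change in the combined diagram.
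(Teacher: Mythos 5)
Your proposal is correct and takes exactly the route the paper intends: the paper explicitly states ``The proof is left for the reader'' (citing \cite[Theorem 3.1]{Yokura-obt} for a more general result), and your verification -- checking that $\m Prop$ and $\m Sm$ are stable under composition and base change so that the three operations are well-defined, identifying the key smoothness factorization $g\circ f\circ p\circ k''=(g\circ k)\circ(f'\circ p')$ in the product diagram, and then reducing the seven axioms to canonical isomorphisms of iterated fiber products via the pasting lemma -- is precisely the standard argument. The only cosmetic quibble is the slightly garbled phrasing ``smooth base change of $g\circ k$'' in your product well-definedness step; what you mean, correctly, is that $f'\circ p'\colon V'\to W$ is the base change of the smooth map $f\circ p$ along $k$, and this composed with the smooth $g\circ k$ gives the required smooth structure map to $Z$.
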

The proof is left for the reader. Note that $\theta(f):=[X \xrightarrow {\op {id}_X} X]$
for the smooth morphism $ f: X\to Y$defines a 
stable orientation on $\bM(\m V/-)$.
We call the bivariant theory $\bM(\m V/-)$ a 
\emph{pre-motivic bivariant relative Grothendieck group} on the category $\m V$ of varieties.

\begin{rem} 
(1) ${\bM}_*(\m V/X) = \bM(\m V/X \to pt)$ is the free abelian group generated by the isomorphism classes $[V \xrightarrow{h}  X]$, where $h$ is proper and $V$ is smooth.  ${\bM}_*(\m V/-)$ is a covariant functor for proper morphisms, i.e., if $f: X \to Y$ is proper, we have the covariant pushforward $$f_*: {\bM}_*(\m V/X) \to {\bM}_*(\m V/Y)\:.$$
 ${\bM}_*(\m V/-)$ is also a contravariant functor for  smooth morphisms, i.e., if $f:X \to Y$ is a smooth morphism, we have the contravaraint Gysin homomorphism 
$$f^!: {\bM}_*(\m V/Y) \to {\bM}_*(\m V/X) \:.$$
(2) ${\bM}^*(\m V/X) = \bM(\m V/X \xrightarrow{\op {id}_X}  X)$ is the free abelian group generated by the isomorphism classes $[V \xrightarrow{h}  X]$, 
where $h$ is \underline {proper and smooth}. 
It gets a ring structure $\cup$ by fiber products, with unit 
$1_X=[X \xrightarrow {\op {id}_X} X]$.
 ${\bM}^*(\m V/-)$ is a contravariant functor for any morphism, i.e., for any morphism $f:X \to Y$ we have the contravariant pullback (preserving $\cup$ and the units)
 $$f^*: {\bM}^*(\m V/Y) \to {\bM}^*(\m V/X)\:.$$ 
 ${\bM}^*(\m V/-)$ is also a covariant functor for  morphisms which are smooth and proper, i.e., if $f:X \to Y$ is a smooth proper morphism, we have the covariant Gysin homomorphism $$f_!: {\bM}^*(\m V/X) \to {\bM}^*(\m V/Y)\:.$$
(3) The bivariant product induces the following ``cap product":
$$\cap :{\bM}^*(\m V/X) \times {\bM}_*(\m V/X) \to {\bM}_*(\m V/X).$$
In particular, when $X$ itself is  a \emph{smooth} variety, with $[X]:=\cap[X \xrightarrow{\op {id}_X}  X]\in {\bM}_*(\m V/X)$, we have the ``Poincar\'e duality" homomorphism
$$\cap [X]: {\bM}^*(\m V/X) \to {\bM}_*(\m V/X)\:,$$
which is nothing but $[W \xrightarrow{k}  X ]\cap[X] = [W \xrightarrow{k}  X ].$
More  generally, the isomorphism class $[V \xrightarrow{h}  X]\in {\bM}_*(\m V/X)$ of any proper morphism $h:V \to X$ from a \emph{smooth} variety $V$ to $X$ gives rise to the  homomorphism
$$\cap{[V \xrightarrow{h}  X]}: {\bM}^*(\m V/X) \to {\bM}_*(\m V/X)$$
defined by $[W \xrightarrow{k}  X ]\cap[V \xrightarrow{h}  X]= [W \times _X V \to X].$
\end{rem}

The bivariant theory $\bM(\m V/-)$ has the following universal property (see \cite[Theorem 3.1]{Yokura-obt} for the proof of a more general result):

\begin{thm}\label{univ}  Let $\bB$ be a  bivariant theory on  $\m V$ such that a smooth morphism $f: X\to Y$ has a 
stable orientation $\theta(f)\in \bB(f)$.
Then there exists a unique Grothendieck transformation
$$\ga:=\ga_{\theta}: \bM(\m V/-) \to \bB(-)$$
satisfying the normalization condition that for a smooth morphism $f:X \to Y$ the following identity holds in $\bB(X \xrightarrow {f} Y)$:
$$\ga([X \xrightarrow {\op {id}_X} X]) =  \theta(f).$$
\end{thm}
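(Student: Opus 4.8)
The plan is to define $\ga$ on generators and check that the required properties force the formula, and that this formula is compatible with the bivariant operations. Given a generator $[W \xrightarrow{h} X] \in \bM(\m V/X \xrightarrow{f} Y)$, by definition $h$ is proper and $s := f\circ h\colon W \to Y$ is smooth, so $h$ factors as $W \xrightarrow{h} X \xrightarrow{f} Y$ with $s = f\circ h \in \Cal Sm$. First I would observe that $[W \xrightarrow{h} X] = h_*\bigl([W \xrightarrow{\op{id}_W} W]\bigr)$, where $[W \xrightarrow{\op{id}_W} W] \in \bM(\m V/W \xrightarrow{s} Y)$ is the class corresponding to the smooth morphism $s$. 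Since $\ga$ must be a Grothendieck transformation commuting with pushforward by the proper (confined) map $h$, and must send $[W \xrightarrow{\op{id}_W} W]$ to $\theta(s)$ by the normalization condition, we are forced to set
$$\ga\bigl([W \xrightarrow{h} X]\bigr) := h_*\bigl(\theta(s)\bigr) \in \bB(X \xrightarrow{f} Y),$$
and extend $\bZ$-linearly. This simultaneously proves uniqueness and gives a candidate; it remains to verify that this candidate genuinely is a Grothendieck transformation.

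Next I would check the three compatibilities. Pushforward is essentially immediate: for $h$ proper, $g_*\bigl([W \xrightarrow{h} X]\bigr) = [W \xrightarrow{g\circ h} X']$ and $g_*(h_*\theta(s)) = (g\circ h)_*\theta(s)$ by functoriality of pushforward in $\bB$ (axiom (B-2)). For pullback along an independent square $g^*$, one uses that in $\bM$ the pullback is computed by base change of $h$, that the base-changed composite $s' = f'\circ h'$ is again smooth, and crucially that $\theta$ is a \emph{stable} orientation, so $\theta(s') = g^*\theta(s)$ (condition~(\ref{nice})); combined with compatibility of pushforward and pullback in $\bB$ (axiom (B-6)) this gives $\ga(g^*\alpha) = g^*\ga(\alpha)$. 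Units are handled by $\theta(\op{id}_X) = 1_X$.

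The main obstacle is compatibility with the bivariant product. For generators $[V \xrightarrow{p} X] \in \bM(\m V/X \xrightarrow{f} Y)$ and $[W \xrightarrow{k} Y] \in \bM(\m V/Y \xrightarrow{g} Z)$, the product in $\bM$ is $[V' \xrightarrow{p\circ k''} X]$ formed from the fiber squares in~(\ref{cd:product}), and one must show
$$h_{V'*}\,\theta(g\circ f\circ p\circ k'') \;=\; \bigl(p_*\theta(f\circ p)\bigr) \bullet \bigl(k_*\theta(g\circ k)\bigr),$$
where I abbreviate the relevant smooth composites. The strategy here is to reduce, using the projection formula (axiom (B-7)) and compatibility of product with pushforward (axiom (B-4)), to the case $V = X$, $W = Y$, i.e.\ to the case of the \emph{orientations themselves}: one needs $\theta(s'\circ t') = \theta(t') \bullet \theta(s')$ for the appropriate pullback $t'$ of a smooth morphism $s\colon W\to Y$ along $f$. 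This is exactly where stability of $\theta$ re-enters — the base change $t'$ of $t\colon X\to Y$ along $s$ satisfies $\theta(t') = s^*\theta(t)$ when $t$ is smooth, but here it is $s$ that is smooth, so instead one invokes: $s'$ (the base change of $s$) is smooth with $\theta(s') = f^*\theta(s)$, and then $\theta(s'\circ p') = \theta(p')\bullet\theta(s')$ follows from the multiplicativity axiom (i) of Definition~\ref{canonical} for the orientation $\theta$ on the class $\Cal Sm$ (closed under composition and base change). Assembling these via the commutativity axioms (B-1), (B-4), (B-5), (B-7) of $\bB$ yields the product compatibility. The bookkeeping of the iterated fiber squares is the tedious part, but it is routine once the reduction to orientations is in place; I would cite \cite[Theorem~3.1]{Yokura-obt} for the full verification of this more general universality statement.
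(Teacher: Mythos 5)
Your overall plan is exactly the one the paper relies on: the paper does not reprove Theorem~\ref{univ} but cites \cite[Theorem 3.1]{Yokura-obt}, and that proof, like yours, defines $\ga$ on generators by $\ga\bigl([W\xrightarrow{h}X]\bigr)=h_*\theta(fh)$, derives uniqueness from $[W\xrightarrow{h}X]=h_*[W\xrightarrow{\op{id}_W}W]$ together with the normalization, and then checks the three operations using (B-2), (B-6), (B-4), (B-7) and multiplicativity/stability of $\theta$. So the proposal is correct in approach. (The passing claim that well-definedness on isomorphism classes is immediate deserves a sentence — one needs, for an isomorphism $\phi$ with $h=h'\phi$, that $h_*\theta(fh)=h'_*\theta(fh')$, which again follows from multiplicativity and the unit/stability axioms — but this is a routine point.)

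Where your write-up goes astray is in the bookkeeping of the product step: the letters $s, t, s', t', p'$ are used inconsistently, and the last displayed identity you invoke, $\theta(s'\circ p')=\theta(p')\bullet\theta(s')$, as written suggests splitting $\theta(f'p')$ via multiplicativity. That would require both $p'$ and $f'$ to be smooth, which they need not be — only the composite $f'p'$ is smooth, being the base change of the smooth $fp$ along $k$. The correct routing is: with the fiber squares of (\ref{cd:product}) and $p\circ k''=k'\circ p'$, $f\circ k'=k\circ f'$, one has $gfpk''=(gk)\circ(f'p')$ with $f'p'\colon V'\to W$ and $gk\colon W\to Z$ both smooth, so multiplicativity gives $\theta(gfpk'')=\theta(f'p')\bullet\theta(gk)$; then stability gives $\theta(f'p')=k^*\theta(fp)$ (a $k$-pullback, not an $f$-pullback, since $f'p'$ is the base change of $fp$ along $k$). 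Writing $(pk'')_*=p_*(k'')_*$, the projection formula (B-7) applied to the fiber square of $fp\colon V\to Y$ and $k\colon W\to Y$, with $k$ confined, gives $(k'')_*\bigl(k^*\theta(fp)\bullet\theta(gk)\bigr)=\theta(fp)\bullet k_*\theta(gk)$, and (B-4) gives $p_*\bigl(\theta(fp)\bullet k_*\theta(gk)\bigr)=p_*\theta(fp)\bullet k_*\theta(gk)$. This is the computation you were gesturing at; the only genuine danger in your sketch was the attempt to factor $\theta(f'p')$, which should be replaced by the single stability step $\theta(f'p')=k^*\theta(fp)$.
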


Note that in \cite{Yokura-obt} only {\em commutative} bivariant theories are considered, but the result and proof of \cite[Theorem 3.1]{Yokura-obt}
works without this assumption.

\begin{cor}\label{twisting}
Let $c\ell: Vect(-) \to  \bB^*(-)$ be a contravariant functorial characteristic  class of algebraic (or analytic) vector bundles
with values in the associated cohomology theory, which is multiplicative in the sense that $c\ell(V) = c\ell(V') c\ell(V'')$ for 
any short exact sequence of vector bundles $0\to V'\to V \to V'' \to 0$, with $c\ell(T_{pt})=1_{pt}\in \bB^*(\{pt\})$. 
Assume that $c\ell$ commutes with the stable orientation $\theta$, i.e.
$$\theta(f)\bullet cl(V)=f^*cl(V)\bullet \theta(f)$$ 
for all smooth morphism $f: X\to Y$ and $V\in Vect(Y)$.
Then there exists a unique Grothendieck transformation
$$\ga_{c\ell}: \bM(\m V/-) \to \bB(-)$$
satisfying the normalization condition that for a smooth morphism $f:X \to Y$ the following identity holds in $\bB(X \xrightarrow {f} Y)$:
$$\ga_{c\ell}([X \xrightarrow {\op {id}_X} X]) = c\ell(T_f) \bullet \theta(f).$$
Here $T_f$ is the relative tangent bundle  of the smooth morphism $f$.
\end{cor}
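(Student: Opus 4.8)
The plan is to deduce Corollary \ref{twisting} directly from Theorem \ref{univ} by a ``twisting'' of the stable orientation, exactly as indicated in the excerpt. First I would verify that the assignment
$$\theta'(f) := c\ell(T_f) \bullet \theta(f) \in \bB(X \xrightarrow{f} Y)$$
for a smooth morphism $f: X \to Y$ is again a stable orientation on the class $\Cal Sm$ of smooth morphisms. There are three things to check. For (ii) of Definition \ref{canonical}: $\theta'(\op{id}_X) = c\ell(T_{\op{id}_X}) \bullet \theta(\op{id}_X) = c\ell(\underline{0}) \bullet 1_X = 1_X$, using $T_{\op{id}_X} = 0$ and the normalization $c\ell(0) = 1$ coming from $c\ell(T_{pt}) = 1_{pt}$ together with contravariant functoriality of $c\ell$. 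For (i), the multiplicativity of $\theta'$ under composition: given $X \xrightarrow{f} Y \xrightarrow{g} Z$ with $f,g$ smooth, one has the short exact sequence $0 \to T_f \to T_{g\circ f} \to f^*T_g \to 0$ of relative tangent bundles, so $c\ell(T_{g\circ f}) = c\ell(T_f)\cdot c\ell(f^*T_g) = c\ell(T_f)\cdot f^*c\ell(T_g)$ by multiplicativity and contravariant functoriality. Then
$$\theta'(g\circ f) = c\ell(T_{g\circ f})\bullet \theta(g\circ f) = c\ell(T_f)\bullet f^*c\ell(T_g) \bullet \theta(f)\bullet \theta(g),$$
and the compatibility hypothesis $\theta(f)\bullet c\ell(V) = f^*c\ell(V)\bullet \theta(f)$ (applied with $V = T_g \in Vect(Y)$) lets us slide $f^*c\ell(T_g)$ past $\theta(f)$, giving $c\ell(T_f)\bullet\theta(f)\bullet c\ell(T_g)\bullet\theta(g) = \theta'(f)\bullet\theta'(g)$. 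Finally, for stability (\ref{nice}): given an independent (here: any fiber) square with $f \in \Cal Sm$ and base change $g$, one has $T_{f'} = (g')^*T_f$ where $g': X' \to X$ is the top arrow, so $c\ell(T_{f'}) = (g')^*c\ell(T_f) = f^*$ applied appropriately; combined with $\theta(f') = g^*\theta(f)$ (stability of the original $\theta$) and compatibility of $\bullet$ with pullback (axiom B-5), one obtains $\theta'(f') = g^*\theta'(f)$.

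Once $\theta'$ is shown to be a stable orientation, Theorem \ref{univ} applies verbatim to $\theta'$ in place of $\theta$: it yields a unique Grothendieck transformation $\ga_{\theta'}: \bM(\m V/-) \to \bB(-)$ with $\ga_{\theta'}([X \xrightarrow{\op{id}_X} X]) = \theta'(f) = c\ell(T_f)\bullet\theta(f)$ for each smooth $f: X \to Y$. Setting $\ga_{c\ell} := \ga_{\theta'}$ gives exactly the asserted normalization condition, and the uniqueness in Corollary \ref{twisting} is inherited from the uniqueness clause of Theorem \ref{univ}, since any Grothendieck transformation satisfying the stated normalization is in particular a Grothendieck transformation $\bM(\m V/-) \to \bB(-)$ restricting to $\theta'$ on the generators $[X \xrightarrow{\op{id}_X} X]$ with $f$ smooth.

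The only genuinely non-routine point is the verification that $\theta'$ satisfies the multiplicativity axiom (i) of Definition \ref{canonical}, i.e. $\theta'(g\circ f) = \theta'(f)\bullet\theta'(g)$ — this is where all three hypotheses on $c\ell$ (contravariant functoriality, multiplicativity on short exact sequences, and compatibility with $\theta$) get used simultaneously, together with the short exact sequence $0 \to T_f \to T_{g\circ f} \to f^*T_g \to 0$. I expect this to be the main obstacle, though it is still a short computation; everything else (the identity case, stability, and the transfer of the uniqueness statement) is essentially formal. One should also remark, as the excerpt notes, that this is precisely the bivariant analogue of the twisting constructions of Quillen \cite{Quillen} and of Levine--Morel \cite[\S 4.1.9]{LM} for oriented (co)homology theories, so no new phenomenon arises at the bivariant level beyond keeping track of the order of the $\bullet$-products and using the compatibility hypothesis to commute the correction factor past $\theta(f)$.
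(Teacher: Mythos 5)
Your proposal is correct and follows exactly the route the paper takes: the paper isolates the same twisting claim as a separate lemma (that $\theta'(f):=c\ell(T_f)\bullet\theta(f)$ is again a stable orientation, verified by the same three checks — identity via $T_{\op{id}_X}=p^*T_{pt}$, multiplicativity via the exact sequence $0\to T_f\to T_{gf}\to f^*T_g\to 0$ plus the commutation hypothesis, and stability via $T_{f'}\simeq g'^*T_f$) and then invokes Theorem \ref{univ} with $\theta'$ in place of $\theta$. Your identification of the multiplicativity step as the one place where all three hypotheses on $c\ell$ interact is exactly where the paper spends its effort, so the two arguments coincide.
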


This follows from Theorem \ref{univ} by using the next result (similar twisting constructions are due to Quillen \cite{Quillen} 
(resp., Levine-Morel \cite[\S 4.1.9]{LM}) in the context of
complex oriented (co)homology theories (resp., oriented Borel-Moore (weak) homology theories):

\begin{lem}
 The definition  $\theta'(f):=c\ell(T_f) \bullet \theta(f)$ for a smooth morphism 
$f: X\to Y$ defines a new ``twisted" stable orientation.
\end{lem}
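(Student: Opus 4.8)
The plan is to verify the two defining conditions of a stable orientation (Definition~\ref{stable}) for the twisted assignment $\theta'(f):=c\ell(T_f)\bullet\theta(f)$ on the class $\Cal S=\Cal Sm$ of smooth morphisms, namely the multiplicativity $\theta'(g\circ f)=\theta'(f)\bullet\theta'(g)$ with $\theta'(\op{id}_X)=1_X$ (Definition~\ref{canonical}), together with the base-change compatibility $\theta'(f')=g^*\theta'(f)$ of equation~(\ref{nice}). For the unit axiom, since $\op{id}_X$ has trivial relative tangent bundle $T_{\op{id}_X}=0$ and $c\ell$ is normalized ($c\ell$ of the trivial bundle is $1$, which follows from multiplicativity applied to $0\to 0\to 0\to 0\to 0$, or from $c\ell(T_{pt})=1_{pt}$ pulled back), we get $\theta'(\op{id}_X)=c\ell(0)\bullet\theta(\op{id}_X)=1_X\bullet 1_X=1_X$.

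First I would check the base-change identity. Given an independent (here: arbitrary cartesian) square with $f:X\to Y$ smooth and base change $g:Y'\to Y$, pulled back to $f':X'\to Y'$ and $g':X'\to X$, the relative tangent bundle satisfies $T_{f'}=g'^*T_f$ by functoriality of the cotangent complex / the standard behaviour of relative tangent bundles under base change. Then using contravariant functoriality of $c\ell$ on $\bB^*$ and the stability (\ref{nice}) of the original orientation $\theta$, one computes $g^*\theta'(f)=g^*\bigl(c\ell(T_f)\bullet\theta(f)\bigr)$; by axiom (B-5) (product and pullback commute) this equals $g^*(c\ell(T_f))\bullet g^*\theta(f)$, where one must be slightly careful that $g^*$ on the cohomology factor $c\ell(T_f)\in\bB^*(X)$ corresponds under pullback along $g'$ to $g'^*c\ell(T_f)=c\ell(g'^*T_f)=c\ell(T_{f'})$, and $g^*\theta(f)=\theta(f')$ by (\ref{nice}). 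Hence $g^*\theta'(f)=c\ell(T_{f'})\bullet\theta(f')=\theta'(f')$, as required.

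Next I would verify multiplicativity. For $X\xrightarrow{f}Y\xrightarrow{g}Z$ both smooth, there is the short exact sequence $0\to T_f\to T_{g\circ f}\to f^*T_g\to 0$, so by multiplicativity of $c\ell$ we have $c\ell(T_{g\circ f})=c\ell(T_f)\cdot f^*c\ell(T_g)$ in $\bB^*(X)$, where $\cdot$ is the product $\bullet$ making $\bB^*(X)$ a ring. Then
$$\theta'(g\circ f)=c\ell(T_{g\circ f})\bullet\theta(g\circ f)=\bigl(c\ell(T_f)\bullet f^*c\ell(T_g)\bigr)\bullet\bigl(\theta(f)\bullet\theta(g)\bigr),$$
using Definition~\ref{canonical}(i) for the original $\theta$ and associativity (B-1). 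I would then move $f^*c\ell(T_g)$ past $\theta(f)$ using precisely the hypothesis $\theta(f)\bullet c\ell(V)=f^*c\ell(V)\bullet\theta(f)$ (applied with $V=T_g$), rewriting the expression as $c\ell(T_f)\bullet\theta(f)\bullet c\ell(T_g)\bullet\theta(g)=\theta'(f)\bullet\theta'(g)$, again invoking associativity. This last manipulation is the step where the ``commutation with the stable orientation'' hypothesis is genuinely used, and it is the main point to get right; everything else is bookkeeping with the seven bivariant axioms. The only subtlety to watch is the placement of pullbacks/bookkeeping of which group each factor lives in when applying (B-1), (B-4), (B-5) and the projection formula, but no calculation beyond these axioms and the two short exact sequences is needed.
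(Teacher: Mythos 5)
Your proof is correct and follows essentially the same route as the paper: the unit axiom via $T_{\op{id}_X}=0$ and normalization of $c\ell$, multiplicativity via the short exact sequence $0\to T_f\to T_{gf}\to f^*T_g\to 0$ together with the commutation hypothesis $\theta(f)\bullet c\ell(V)=f^*c\ell(V)\bullet\theta(f)$, and stability via $T_{f'}\simeq g'^*T_f$ and the base-change compatibility of $\theta$. Your remark distinguishing the bivariant pullback $g^*$ from the cohomological pullback $g'^*$ on $c\ell(T_f)\in\bB^*(X)$ is a small point of bookkeeping that the paper glosses over; otherwise the two arguments coincide step for step.
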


\begin{proof} First note that $T_{id_X}$ is the zero vector bundle $p^*T_{pt}$ for $p: X\to pt$ the constant map so that by functoriality
$c\ell(T_{id})=  p^*c\ell(T_{pt}) = p^*(1_{pt})=1_X \in \bB^*(X)$. This implies (ii): $\theta'(id_X)=1_X\in \bB^*(X)$ for all $X$.

Let us now proof the multiplicativity (i):
$$\theta'(g\circ f) = \theta'(g) \bullet  \theta'(f)$$
for all smooth morphism $f: X\to Y$ and $g: Y\to Z$.
Here we have $c\ell (T_{gf})= c\ell (T_{f}) \bullet f^* c\ell(T_{g})$ by the functoriality and multiplicativity of $c\ell$, due to the short exact sequence of vector bundles
$$0\to T_{f } \to T_{gf} \to f^*T_{g}\to 0\:.$$
Similarly $\theta(gf)=\theta(f) \bullet \theta(g) $, since $\theta$ is a canonical orientation. Moreover, $c\ell$ commutes by assumption with the 
orientation $\theta$
so that
$$\theta(f) \bullet c\ell(T_{g}) = f^* c\ell(T_{g}) \bullet \theta(f).$$

So we get
\begin {align*}
\theta'(g\circ f) &:= c\ell(T_{g\circ f}) \bullet \theta(g\circ f)\\
& = \left( c\ell (T_{f}) \bullet f^* c\ell(T_{g})\right) \bullet \left(  \theta(f) \bullet \theta(g) \right) \\
& =  c\ell (T_{f}) \bullet \left( f^* c\ell(T_{g}) \bullet   \theta(f) \right) \bullet \theta(g)  \\
& =  \left( c\ell (T_{f}) \bullet \theta(f) \right) \bullet \left( c\ell(T_{g}) \bullet \theta(g) \right) \\
& = \theta'(g) \bullet  \theta'(f) \:.
\end{align*}

Finally we show that $\theta'$ is a stable orientation, i.e. (iii):
$$\theta'(f')=g^* \theta'(f)$$
in the context of Definition \ref{stable}(ii). This follow from $T_{f'}\simeq g^*(T_f)$ by the functoriality 
$ c\ell(T_{f'}) = g^*c\ell(T_{f})$ of $ c\ell$ and the stability $\theta(f')=g^* \theta(f)$ of $\theta$:

\begin{align*}
\theta'(f')&:= c\ell(T_f') \bullet \theta(f')\\
&= g^*c\ell(T_{f}) \bullet g^* \theta(f)\\
&= g^*\left( c\ell(T_{f}) \bullet  \theta(f) \right)\\
&= g^* \theta'(f)\:.
\end{align*}
\end{proof}

Note that the assumption, that the characterisic class  $c\ell$ commutes with the 
orientation $\theta$, is true for $\bB$ commutative, or $\bB$ graded-commutative with
$c\ell$ taking values in even degree cohomology classes.
Similarly it is true for the trivial class $c\ell(V)=1$ the unit in $\bB^*(-)$,
as well as for $\bB=CH$ the bivariant Chow homology , with $c\ell$ a ``usual"
multiplicative characteristic class given in terms of Chern class operators as in
\cite[\S 3.2]{Fulton-book}. This covers all cases we need in this paper. Finally, the Grothendieck transformation 
$$\ga_{c\ell}: \bM(\m V/-) \to \bB(-)$$
from Corollary \ref{twisting}
satisfies by the normalization condition 
$$\ga_{c\ell}([X \xrightarrow {\op {id}_X} X]) = c\ell(T_f) \bullet \theta(f)$$
the \emph{Riemann-Roch formula} with $u_f=c\ell(T_f)$  for a smooth morphism $f:X \to Y$.
So by the general theory we get the\\ 

\underline {SGA 6 -type Riemann--Roch Theorem}: The following diagram commutes for a 
proper smooth morphism $f: X \to Y$:
$$\CD
{\bM}^*(\m V/X) @>  {\ga_{c\ell}}^* >> \bB^*(X)\\
@V {f_!} VV @VV {f_!(c\ell(T_f) \cup \;-\;) }V\\
{\bM}^*(\m V/Y) @>> {\ga_{c\ell}}^* > \bB^*(Y) . \endCD
$$

\underline {Verdier-type Riemann--Roch Theorem}:  The following diagram commutes for a smooth morphism $f: X \to Y$:
$$\CD
{\bM}_*(\m V/X) @> {\ga_{c\ell}}_* >> \bB_*(X)\\
@V {f^!} VV @VV {c\ell(T_f)\cap f^!}V\\
{\bM}_*(\m V/Y) @>>  {\ga_{c\ell}}_* > \bB_*(Y) . \endCD
$$\\

\begin{rem}
\begin{enumerate}
\item  $\ga_{c\ell}:\bM(\m V/X  \xrightarrow{f}  Y) \to \bB(X  \xrightarrow{f}  Y)$ can be called a \emph {bivariant pre-motivic characteristic class transformation}. When $Y$ is a point $pt$, 
$${\ga_{c\ell}}_*:  \bM (\m V/ X \to pt) \to \bH(X \to pt) = \bB_*(X)$$
 is the \emph{unique natural transformation} satisfying the \emph{normalization condition} that for a smooth variety
$${\ga_{c\ell}}_*([X \xrightarrow {id_X} X]) = c\ell(TX) \cap [X].$$
In other words, this gives rise to a \emph{pre-motivic characteristic class transformation for singular varieties}. 
In a sense, this could be also a very general answer to the forementioned MacPherson's question about the existence of a unified theory of 
characteristic classes for singular varieties. 

As mentioned in Remark \ref{univ-BM},  ${\bM}_ *(\m V/-)$ is in fact  a {\em universal Borel Moore functor} 
with product (but without an additivity property), see \cite{Yokura-obt}.
\\
\item In particular, we have the following commutative diagrams:
$$\xymatrix{
&{\bM}_* (\m V/ X) \ar [dl]_{\epsilon} \ar [dr]^{{\ga_{c}}_*} \\
{F(X) } \ar [rr] _{c_*}& &  H_*(X)},$$
with $H_*(X)=CH_*(X)$ in the algebraic context over a base field of characteristic zero,
or $H_*(X)=H^{BM}_{2*}(X)$ in the complex algebraic or compact complex analytic context.
Here $\epsilon ([V \xrightarrow{h}  X]) := h_* \jeden_V$.
$$\xymatrix{
&  {\bM}_* (\m V/ X) \ar [dl]_{mC_0} \ar [dr]^{{\ga_{td}}_*} \\
{G_0(X) } \ar [rr] _{td_*}& &  H_*(X)\otimes \bQ},$$
with $H_*(X)=CH_*(X)$ in the algebraic context over a base field of any characteristic,
or $H_*(X)=H^{BM}_{2*}(X)$ in the complex algebraic or compact complex analytic context.
Here $mC_0([V \xrightarrow{h}  X]) := [h_* \mathcal O_V]$.
$$\xymatrix{
& {\bM}_* (\m V/ X) \ar [dl]_{sd} \ar [dr]^{{\ga_L}_*} \\
{\Omega_{sd}(X) } \ar [rr] _{L_*}& &  H_{2*}^{BM}(X)\otimes \bQ.}
$$
Here $X$ has to be a compact complex algebraic or analytic variety,
with $$sd([V \xrightarrow{h}  X]) := [h_* \bQ_V[dim(X)]]\:.$$

Note that all these covariant theories come from a suitable bivariant theory, except $\Omega_{sd}(X)$. So the right slant arrows follow e.g. from
Corollary \ref{twisting} applied to the canonical orientation $\theta(f)=[f]\in \bH(f)$ given by the relative fundamental class of the smooth morphism $f$.
As mentioned already before, the characteristic classes $c\ell= c^*, td^*$ and $L^*$ are multiplicative and commute with $\theta$ by general reasons.
The first two left slant arrows follow from Theorem \ref{univ} applied to the following canonical orientation of a smooth morphism $f$: 
$\theta(f)=\jeden_f\in \bF$ resp. 
$\jeden_f\in \tilde{\bF}$, and $\theta(f)= \m O_f \in \bK_{alg}(f)$. The third left slant arrows $sd$ follows e.g. from the universal property 
of ${\bM}_* (\m V/ -)$ as a universal  Borel Moore functor (or by direct construction). 
\item  It follows from Hironaka's resolution of singularities (\cite{Hironaka}) that there exists a surjection
$${\bM}_* (\m V/ X) \to K_0(\m V /X)$$
in the algebraic context over a base field of characteristic zero,
or in the  compact complex analytic context.
As already explained in the introduction, it then turns out that if (under a certain requirement) the natural transformation  ${\ga_{c\ell}}_*:  {\bM}_* (\m V/ X)\to  H_*(X)\otimes \bQ[y]$ can be pushed down to the relative Grothendieck group $K_0(\m V /X)$, then it has to be the Hirzebruch class transformation, i.e., the following diagram commutes:
$$\xymatrix{
& {\bM}_* (\m V/ X) \ar [dl]_{q} \ar [dr]^{{\ga_{c\ell}}_*} \\
{K_0(\Cal V/X) } \ar [rr] _{{T_y}_*}& &  H_*(X)\otimes \bQ[y].}
$$
And one of the main results of our previous paper \cite{BSY1} claims that in this context the above three diagrams also commute with ${\bM}_* (\m V/ X)$ being replaced by the smaller group $K_0(\m V/X)$.
\end{enumerate}
\end{rem}

Thus we are led to the following natural problem:

\begin{prob}\label{problem} Formulate a reasonable bivariant analogue $\bK_0(\m V/ X\xrightarrow{f} Y)$ of the relative Grothendieck group $K_0(\m V/ X)$ so that the following hold:
\begin{enumerate}
\item There is a natural group homomorphism $q: \bK_0(\m V/ X\xrightarrow{} pt) \to K_0(\m V/X)$, which is an isomorphism in the algebraic context over a base field of characteristic zero, or in the  compact complex analytic context.
\item $\bB q: \bM(\m V/ X\xrightarrow{f} Y)  \to \bK_0(\m V/ X\xrightarrow{f} Y)$ is a certain quotient map, which specializes for $Y$  a point to the quotient map $q: {\bM}_*(\m V/ X)  \to K_0(\m V/ X)$.
\item $T_y:\bK_0(\m V/ X\xrightarrow{f} Y) \to \bH(X\xrightarrow{f} Y)\otimes \bQ[y]$ is a Grothendieck transformation, which specializes for $Y$ a point (in the algebraic context over a base field of characteristic zero, or in the  compact complex analytic context) to the motivic Hirzebruch class transformation
${T_y}_*:K_0(\Cal V/X) \to H_*(X)\otimes \bQ[y]$.
\item The following diagram commutes:
$$\xymatrix{
& \bM(\m V/ X\xrightarrow{f} Y) \ar [dl]_{\bB q} \ar [dr]^{\ga_{T_{y}^*}} \\
{\bK_0(\m V/ X\xrightarrow{f} Y)} \ar [rr] _{T_y}& &  \bH(X\xrightarrow{f} Y)\otimes \bQ[y].}
$$
\end{enumerate}
\end{prob}

If such a bivariant theory $\bK_0(\m V/ X\xrightarrow{f} Y)$ is obtained, then its associated contravariant functor $K^0(\m V/ X): = \bK_0(\m V/ X\xrightarrow{\op {id}_X} X)$ can be considered as a contravariant counterpart of the relative Grothendieck group $K_0(\m V/X)$
(at least in the algebraic context over a base field of characteristic zero, or in the  compact complex analytic context). Similarly, the natural transformation $T_y^*: K^0(\m V/ -) 
\to H^*(-)\otimes \bQ[y]$ is a contravariant counterpart of the Hirzebruch class transformations
$T_{y*}$ satisfying the \emph{module property}.

\section{A bivariant relative Grothendieck group $\bK_0(\m V/ X\xrightarrow{f} Y)$}

First we recall the following result of Franziska Bittner \cite{Bittner}:
\begin{thm}[Bittner]\label{bittner} Let $K_0(\m V/X)$ be the relative Grothendieck group of varieties over
$X\in obj(\m V)$, with $\m V=\m V^{(qp)}_k$ (resp. $\m V=\m V^{an}_c$) the category of
(quasi-projective) algebraic (resp. compact complex analytic) varieties over a base field $k$
of characteristic zero.

Then $K_{0}(\m V/X)$ is isomorphic to ${\bM}_*(X)$ modulo the
``blow-up'' relation
\begin{equation} \label{eq:bl}
[\emptyset \to X]=0 \quad \text{and} \quad
[Bl_{Y}X'\to X] - [E\to X]= [X'\to X] - [Y\to X] \:,\tag{bl}
\end{equation}
for any cartesian diagram (which shall be called the ``blow-up diagram" from here on)
\begin{displaymath} \begin{CD}
E @> i'>> Bl_{Y}X' \\
@VV q' V  @VV q V \\
Y @> i >> X' @> f >> X \:,
\end{CD} \end{displaymath}
with $i$ a closed embedding of smooth spaces and 
$f:X'\to X$ proper. Here $Bl_{Y}X'\to X'$ is the blow-up of $X'$ along 
$Y$ with exceptional divisor $E$. Note that all these spaces other than $X$ are
also smooth (and quasi-projective in case $X', Y\in ob(\m V^{qp}_k)$).
\end{thm}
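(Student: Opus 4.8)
This is a theorem of Bittner \cite{Bittner}; the plan is to recall the structure of her proof. Consider the canonical homomorphism
$$\Theta\colon {\bM}_*(\m V/X)\longrightarrow K_0(\m V/X),\qquad [V\xrightarrow{h}X]\longmapsto [V\xrightarrow{h}X],$$
which is well defined because a generator of ${\bM}_*(\m V/X)$ — a proper $h\colon V\to X$ with $V$ smooth — in particular determines a class in the relative Grothendieck group $K_0(\m V/X)$. The assertion is that $\Theta$ is surjective with kernel the subgroup $\langle[\emptyset\to X],(\mathrm{bl})\rangle$ generated by $[\emptyset\to X]$ and the blow-up elements (bl). One inclusion of kernels is formal; the other, together with surjectivity, is where the work lies.

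\textbf{Surjectivity and the easy inclusion.} Surjectivity rests on Hironaka's resolution of singularities \cite{Hironaka}, together with Nagata compactification (to replace an arbitrary $h$ by a proper one in the algebraic case): stratifying $V$ into smooth locally closed pieces, resolving, and repeatedly applying the scissor relations, one sees by Noetherian induction on $\dim V$ that every class $[V\to X]\in K_0(\m V/X)$ is a $\bZ$-linear combination of classes $[W\xrightarrow{g}X]$ with $W$ smooth and $g$ proper, hence lies in the image of $\Theta$. That the generators of $\langle[\emptyset\to X],(\mathrm{bl})\rangle$ lie in $\ker\Theta$ is immediate: $[\emptyset\to X]\mapsto 0$ trivially, and for a blow-up diagram as in the statement the blow-up morphism $q\colon Bl_{Y}X'\to X'$ is an isomorphism over $X'\setminus Y$ with preimage $Bl_{Y}X'\setminus E$, so two applications of the scissor relation in $K_0(\m V/X)$ give $[Bl_{Y}X'\to X]=[E\to X]+[X'\to X]-[Y\to X]$.

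\textbf{The hard part: $\ker\Theta\subseteq\langle[\emptyset\to X],(\mathrm{bl})\rangle$.} This is the heart of Bittner's argument, and where characteristic zero enters a second time. The plan is to construct, on the quotient $\overline{\bM}_*(\m V/X):={\bM}_*(\m V/X)/\langle[\emptyset\to X],(\mathrm{bl})\rangle$, a two-sided inverse $\Psi$ to the induced map $\bar\Theta$. For $V$ smooth one fixes a smooth compactification $V\hookrightarrow\bar V$ over $X$ with $\bar h\colon\bar V\to X$ proper and boundary $D=\bar V\setminus V=\bigcup_i D_i$ a simple normal crossing divisor, and sets
$$e(V\xrightarrow{h}X):=\sum_I(-1)^{|I|}\bigl[D_I\to X\bigr]\in\overline{\bM}_*(\m V/X),\qquad D_\emptyset:=\bar V,\ \ D_I:=\textstyle\bigcap_{i\in I}D_i,$$
a combination of smooth proper classes (so $e(V)=[V\to X]$ whenever $h$ is already proper), and then extends $e$ to an arbitrary $h\colon V\to X$ by Noetherian induction via $e(V)=e(V_{\mathrm{reg}})+e(V\setminus V_{\mathrm{reg}})$. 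The step I expect to be the main obstacle is the well-definedness of $e$, i.e.\ independence of all the chosen compactifications, and this is exactly what the Weak Factorization Theorem of Abramovich--Karu--Matsuki--W{\l}odarczyk \cite{AKMW, W} delivers: any proper birational morphism of smooth varieties that is an isomorphism over $V$ factors into a finite chain of blow-ups and blow-downs along smooth centers lying over, and having normal crossings with, the boundary divisors, and each such elementary modification alters the alternating sum $\sum_I(-1)^{|I|}[D_I\to X]$ only by an iterated application of the relations (bl); comparing two smooth compactifications through a common smooth dominating one then shows $e(V)$ well defined in $\overline{\bM}_*(\m V/X)$. Everything downstream is formal: refining stratifications shows $e$ is compatible with the scissor relations, hence descends to $\Psi\colon K_0(\m V/X)\to\overline{\bM}_*(\m V/X)$; the inductive identity defining $e$ maps under $\bar\Theta$ to an iterated scissor relation, giving $\bar\Theta\circ\Psi=\mathrm{id}$, while $e(V)=[V\to X]$ on smooth proper generators gives $\Psi\circ\bar\Theta=\mathrm{id}$. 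Thus $\bar\Theta$ is an isomorphism. In the compact complex analytic setting one argues along the same lines, using Hironaka's analytic resolution and the bimeromorphic weak factorization theorem in place of their algebraic counterparts, unless one simply takes this presentation as the definition of $K_0(\m V^{an}_c/X)$.
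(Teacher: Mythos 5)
The paper does not prove this theorem: it is stated and attributed to Bittner, and the authors only remark that ``the proof of this theorem requires Abramovich et al's `Weak Factorisation Theorem'.'' So there is no in-paper argument to compare against; your task is really to reconstruct Bittner's proof, and your reconstruction is faithful to it. The three pillars you identify — surjectivity of $\Theta$ from Hironaka (plus Nagata-style compactification for non-proper $h$, which is trivial in the quasi-projective case), the easy inclusion of the blow-up subgroup into $\ker\Theta$ via two scissor relations, and the inverse $\Psi$ built from the alternating sum $\sum_I(-1)^{|I|}[D_I\to X]$ over boundary strata of a smooth SNC compactification, with well-definedness reduced to weak factorization — are exactly the skeleton of Bittner's argument, including the relative-over-$X$ form used here. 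Two small points worth keeping in mind if you were to expand the sketch into a full proof: (1) the step ``each elementary modification alters the alternating sum only by an iterated application of (bl)'' hides a genuine combinatorial computation with the strata of $D\cup E'$ after blowing up a center having normal crossings with $D$, which Bittner carries out explicitly; and (2) in the compact complex analytic case the scissor relation does not even make sense inside $\m V^{an}_c$ (open complements are not compact), so your hedge that one may simply take the blow-up presentation as the definition of $K_0(\m V^{an}_c/X)$ is in fact the stance taken in \cite{BSY1}, and the paper's later identification $\bK_0(\m V^{an}_c/X\to pt)\simeq K_0(\m V^{an}_c/X)$ relies on that convention.
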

The proof of this theorem requires Abramovich et al's ``Weak Factorisation Theorem" 
\cite{AKMW, W}. 
The kernel of the canonical quotient map $q:{\bM}_* (\m V/ X) \to  K_0(\m V/X)$ is the subgroup $BL(\m V/X)$ of ${\bM}_* (\m V/ X)$ generated by 
$$[Bl_{Y}X'\to X] - [E\to X] -[X'\to X] + [Y\to X]$$
for any blow-up diagram as above.

Thus what we want is a bivariant analogue of the subgroup $BL(\m V/X)$. For that purpose we first observe the following result, working in the category 
$\m V=\m V^{(qp)}_k$ (resp. $\m V=\m V^{an}_c$) of
(quasi-projective) algebraic (resp. compact complex analytic) varieties over a base field $k$
of any characteristic.
 
\begin{lem} \label{key-lemma} Let  $h:X' \to X$ be a \emph{smooth} morphism, with $i:S \to X'$ a closed embedding such that the composite $h \circ i : S \to X$ is also \emph{smooth} morphism. 
Consider the cartesian diagram
\begin{equation}\label{relative bl}
\begin{CD}
E @> i'>> Bl_{S}X' \\
@V q' VV  @VV q V \\
S @>> i > X' @>>h > X \:,
\end{CD}
\end{equation}
with $q: Bl_{S}X' \to X'$ the blow-up of $X'$ along $S$ and $q':E \to S$ the exceptional divisor map. Then: 
\begin{enumerate}
\item  $h \circ q: Bl_{S}X' \to X$  and $h\circ q\circ i': E \to X$ are also \emph{smooth} morphisms,
with $ Bl_{S}X', E$ quasi-projective in case $X', S\in ob(\m V^{qp}_k)$.
\item This blow-up diagram \emph{commutes with any base change} in $X$, i.e. the 
corresponding fiber-square induced by pullback along a morphism $\tilde{X}\to X$
is isomorphic to the corresponding  blow-up diagram of $\tilde{S}\to \tilde{X}'$.
\item The closed embeddings $i,i'$ are \emph{regular} embeddings, and the projection map $q$ as well as $i,i'$ are of \emph{finite Tor-dimension}.
\end{enumerate}
\end{lem}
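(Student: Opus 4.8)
Throughout write $\mathcal{O}=\mathcal{O}_{X'}$ and let $\mathcal{I}=\mathcal{I}_S\subseteq\mathcal{O}$ be the ideal sheaf of $S$. The whole lemma rests on one observation: since $h\colon X'\to X$ and $h\circ i\colon S\to X$ are smooth, both $X'$ and $S$ are flat over $X$, and the closed embedding $i\colon S\hookrightarrow X'$ is a \emph{regular} embedding. The latter I would get from the fibral criterion for regular immersions: $i$ is a closed immersion between schemes flat and locally of finite presentation over $X$, and for every $x\in X$ the fiber $i_x\colon S_x\hookrightarrow X'_x$ is a closed immersion of smooth (hence regular) varieties over $k(x)$, which is automatically a regular immersion; therefore $i$ itself is a regular immersion of $X$-flat schemes. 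Consequently the conormal sheaf $\mathcal{I}/\mathcal{I}^2$ is locally free over $\mathcal{O}_S$ and the associated graded of the $\mathcal{I}$-adic filtration is $\bigoplus_{n\ge 0}\mathcal{I}^n/\mathcal{I}^{n+1}\cong\operatorname{Sym}^{\bullet}_{\mathcal{O}_S}(\mathcal{I}/\mathcal{I}^2)$; in particular $q'\colon E\to S$ is the projective bundle $\mathbb{P}(\mathcal{I}/\mathcal{I}^2)\to S$ and $i'\colon E\hookrightarrow Bl_SX'$ is the inclusion of the effective Cartier divisor.

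Part (3) is then immediate: $i$ is a regular immersion, so $\mathcal{O}_S$ has a finite Koszul resolution over $\mathcal{O}$ and $i$ is of finite Tor‑dimension; $i'$ is a Cartier divisor, hence a regular immersion of codimension one and of Tor‑dimension $\le 1$; and $q$ is of finite Tor‑dimension because the blow‑up of a regularly embedded center is a perfect morphism — concretely, in the standard chart $\operatorname{Spec}\bigl(\mathcal{O}[y_i: i\neq j]/(t_i-t_jy_i: i\neq j)\bigr)$ attached to a local regular sequence $t_1,\dots,t_c$ cutting out $S$, the elements $(t_i-t_jy_i)_{i\neq j}$ form a regular sequence (check it together with the nonzerodivisor $t_j$ and reduce mod $t_j$), so each chart is a relative complete intersection over $\mathcal{O}$. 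For part (2), the point is that $\mathcal{I}^n$ is $X$‑flat for every $n$: the sheaf $\mathcal{O}/\mathcal{I}^n$ carries a finite filtration with graded pieces $\mathcal{O}_S$ and $\operatorname{Sym}^m_{\mathcal{O}_S}(\mathcal{I}/\mathcal{I}^2)$ for $1\le m<n$, all locally free over $\mathcal{O}_S$ and hence ($S\to X$ being flat) $X$‑flat, so $\mathcal{O}/\mathcal{I}^n$ is $X$‑flat as an iterated extension; feeding this and the $X$‑flatness of $\mathcal{O}$ into $0\to\mathcal{I}^n\to\mathcal{O}\to\mathcal{O}/\mathcal{I}^n\to 0$ shows $\mathcal{I}^n$ is $X$‑flat and that the sequence stays exact after $-\otimes_{\mathcal{O}_X}\mathcal{O}_{\tilde X}$ for any $\tilde X\to X$. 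Hence $\mathcal{I}^n\otimes_{\mathcal{O}_X}\mathcal{O}_{\tilde X}\xrightarrow{\ \cong\ }\widetilde{\mathcal{I}}{}^{\,n}\subseteq\mathcal{O}_{\tilde X'}$ (with $\widetilde{\mathcal{I}}=\mathcal{I}\cdot\mathcal{O}_{\tilde X'}$), so the Rees algebras match up after base change and applying $\operatorname{Proj}$ gives $Bl_SX'\times_X\tilde X\cong Bl_{\widetilde S}\widetilde{X}'$ together with $E\times_X\tilde X\cong\widetilde E$; taking graded pieces gives the corresponding statement for the exceptional divisors. This is exactly the asserted compatibility of the diagram (\ref{relative bl}) with arbitrary base change in $X$.

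Part (1) follows from (2) plus the fibral criterion for smoothness. Since $\bigoplus_n\mathcal{I}^n$ is $X$‑flat, so is $Bl_SX'=\operatorname{Proj}_{\mathcal{O}_{X'}}\bigl(\bigoplus_n\mathcal{I}^n\bigr)$, being covered by spectra of degree‑zero parts of localizations of that $X$‑flat Rees algebra; thus $h\circ q\colon Bl_SX'\to X$ is flat and locally of finite presentation, and by (2) its geometric fibers are $Bl_{S_{\bar x}}(X'_{\bar x})$, the blow‑up of a smooth variety along a smooth closed subvariety, which is smooth (reduce \'etale‑locally to the blow‑up of an affine space along a linear subspace). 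Hence $h\circ q$ is smooth, and in particular $Bl_SX'$ is again a variety (smooth over the reduced scheme $X$ with geometrically reduced fibers). For $E$ one uses $h\circ q\circ i'=(h\circ i)\circ q'$ with $q'$ a projective bundle, hence a composition of smooth morphisms. Quasi‑projectivity of $Bl_SX'$ in the case $\mathcal V=\mathcal V^{qp}_k$ is automatic since $q$ is projective and $X'$ is quasi‑projective over $k$, and $E$ is closed in $Bl_SX'$; the same arguments go through verbatim in the compact complex analytic setting. The only genuinely delicate step is the $X$‑flatness of the powers $\mathcal{I}^n$ underlying part (2); everything else is bookkeeping once one knows $i$ is a regular embedding of $X$‑flat schemes.
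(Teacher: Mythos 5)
Your proof is correct, but it takes a genuinely different route from the paper's. The paper reduces everything to a local model: since a pair $S\hookrightarrow X'$ of smooth $X$-schemes is, \'etale-locally on $X'$, isomorphic to the standard inclusion $\bA^m\times X\hookrightarrow\bA^n\times X$ with $h$ the second projection, and blow-up commutes with flat base change, the diagram (\ref{relative bl}) becomes $Bl_{\bA^m}\bA^n\times X$ over $X$; parts (1) and (3) are then well-known facts about this model, and part (2) follows by running the same flat-base-change observation once more for $\tilde h$. Your argument is instead global and Rees-algebra-theoretic: you show $i$ is a regular immersion via the fibral criterion, deduce $X$-flatness of each power $\mathcal I^n$ from the filtration of $\mathcal O/\mathcal I^n$ with symmetric-power graded pieces, and read off base-change compatibility of $\operatorname{Proj}\bigl(\bigoplus_n\mathcal I^n\bigr)$, flatness and smoothness of the blow-up over $X$, and the finite-Tor-dimension statements directly from that. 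The paper's reduction is shorter and elementary; your version isolates the $X$-flatness of the Rees algebra as the conceptual core of the lemma, needs no \'etale localization, and makes the flat-base-change input visible rather than delegating it to the model case. The one step worth tightening is the parenthetical verification that $(t_i-t_jy_i)_{i\neq j}$ is a regular sequence in $\mathcal O[y_i]$: appending $t_j$ and reducing mod $t_j$ does show the augmented sequence $(t_j,\,(t_i-t_jy_i)_{i\neq j})$ is regular, but to drop $t_j$ and keep a regular sequence in a non-local polynomial ring you still need to localize at each relevant prime and invoke permutability of regular sequences in Noetherian local rings (or simply cite the standard presentation of the Rees algebra of an ideal generated by a regular sequence). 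Everything else is sound.
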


\begin{proof}
Note that all results are (\'etale) local in $X'$.
Since both morphisms $h: X' \to X$ and $S \to X' \to X$ are smooth, we can assume that
$h$ is the projection $h=pr_2: X'= \bA^n \times X \to X$, with $i: S= \bA^m \times X \to
\bA^n \times X $ induced from a standard inclusion $\bA^m \hookrightarrow \bA^n$ of affine spaces
($m\leq n$), and the blow-up diagram (\ref{relative bl}) isomorphic to
$$\begin{CD}
E \times X @> i'>> Bl_{\bA^m}\bA^n \times X \\
@V q' VV  @VV q V \\
\bA^m \times X @>> i > \bA^n \times X @>>h=pr_2 > X \:.
\end{CD}$$
Here we use the fact that 
$$Bl_{  \bA^m \times X}(\bA^n \times X)\simeq Bl_{\bA^m}\bA^n \times X \:,$$
since blowing up commutes with flat base change for the flat projection map 
$h=pr_2: X'=\bA^n \times X \to X$. Then (1) and (3) are well known, whereas (2)
follows again from the fact that blowing up commutes with flat base change for the flat projection maps 
$h=pr_2: X'=\bA^n \times X \to X$ and
$\tilde{h}=pr_2: \tilde{X}'=\bA^n \times \tilde{X} \to \tilde{X}$.
\end{proof}

Now we are ready to define a bivariant analogue ${\mathbb {BL}(\m V/X \xrightarrow{f}  Y)}$ of the subgroup $BL(\m V/X)$ and thus a bivariant analogue $\bK_0(\m V/X \xrightarrow{f}  Y)$ of $K_0(\m V/X)$.

\begin{defn} For a morphism $f: X \to Y$ in the category $\m V=\m V^{(qp)}_k$ or 
$\m V=\m V^{an}_c$, we consider a blow-up diagram
$$\begin{CD}
E @> i'>> Bl_{S}X' \\
@VV q' V  @VV q V \\
S @> i >> X' @> h >> X  @> f >> Y\:,
\end{CD} $$
with $h$ proper and $i$ a closed embedding such that
$f \circ h$ as well as $f \circ h \circ i$ are smooth. 
 
Let $\mathbb {BL}(\m V/ X \xrightarrow{f}  Y)$ be the free abelian subgroup of 
$\bM(\m V/X \xrightarrow{f}  Y)$ generated by 
\begin{equation} \label{eq:rbl}
 [Bl_{S}X' \xrightarrow{h q}  X] - [E \xrightarrow{hiq'}  X] -  [X'\xrightarrow{h}  X] + [S\xrightarrow{hi}  X] \tag{rbl}
 \end{equation}
for any such diagram, and define
$$\bK_0(\m V/X \xrightarrow{f}  Y) := \frac{\bM(\m V/X \xrightarrow{f}  Y)}
{\mathbb {BL}(\m V/X \xrightarrow{f}  Y)}.$$
The corresponding equivalence class of $[V \xrightarrow{p}  X]$ shall be denoted by $\Bigl[[V \xrightarrow{p}  X] \Bigr].$
\end{defn}

Note that by Lemma \ref{key-lemma} (1) $f \circ h \circ q$ and $f \circ h \circ i\circ q'$ are smooth (with $Bl_{S}X'$ and $E$ quasi-projective in the case $\m V=\m V^{qp}_k$), so that 
the ``relative blow-up relation'' (\ref{eq:rbl}) makes sense in 
$\bM(\m V/X \xrightarrow{f}  Y)$.

\begin{thm}\label{theorem}
Let $\m V=\m V^{(qp)}_k$ be the category of (quasi-projective) algebraic varieties 
(i.e. reduced separated schemes of finite type) over a base field $k$ of any characteristic, or let $\m V=\m V^{an}_c$ be the category of compact reduced complex analytic spaces.

$\bK_0(\m V/X \xrightarrow{f}  Y)$ becomes a bivariant theory with the following three operations, so that
the canonical projection 
$\bB q: \bM(\m V/-) \to \bK_0(\m V / - )$ is a Grothendieck transformation.

{\bf Product operation}: For morphisms $f: X \to Y$ and $g: Y
\to Z$, the product operation
$$\bigstar: \bK_0 (\m V/X  \xrightarrow{f}  Y) \otimes \bK_0 (\m V/ Y  \xrightarrow{g}  Z) \to
\bK_0 ( \m V/X  \xrightarrow{gf}  Z)$$
is  defined by
$$\Bigl[[V \xrightarrow{h}  X]\Bigr] \bigstar \Bigl[[W  \xrightarrow{k}  Y]\Bigr]:= \Bigl[[V \xrightarrow{h}  X] \bullet [W  \xrightarrow{k}  Y]\Bigr]$$
and bilinearly extended.

{\bf Pushforward operation}: For morphisms $f: X \to Y$
and $g: Y \to Z$ with $f \in \m Prop$, the pushforward operation
$$f_*: \bK_0  (\m V/ X  \xrightarrow{gf} Z) \to \bK_0  (\m V/ Y  \xrightarrow{g}  Z) $$
is  defined by
$$f_*\left (\, \, \Bigl[[V \xrightarrow{p}  X]\Bigr]\, \, \right ) := \Bigl[ f_*([V  \xrightarrow{p}  X])\Bigr]$$
and linearly extended. 

{\bf Pullback operation}: For an independent square
$$\CD
X' @> g' >> X \\
@V f' VV @VV f V\\
Y' @>> g > Y, \endCD
$$
the pullback operation
$$g^* : \bK_0  (\m V/ X  \xrightarrow{f} Y) \to \bK_0 (\m V/ X'  \xrightarrow{f'} Y') $$
is  defined by
$$
g^*\left (\, \,\Bigl[[V  \xrightarrow{p}  X]\Bigr] \, \, \right )  := \Bigl[g^*([V  \xrightarrow{p}  X])\Bigr]
$$
and linearly extended.
\end{thm}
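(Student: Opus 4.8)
The entire content of this statement is the \emph{well-definedness} of the three operations on the quotient; everything else is formal. By Theorem \ref{UBT} the assignment $\bM(\m V/-)$ is already a bivariant theory, and by construction $\bB q\colon \bM(\m V/X\xrightarrow{f}Y)\to\bK_0(\m V/X\xrightarrow{f}Y)$ is the surjective quotient map by the subgroup $\mathbb{BL}(\m V/X\xrightarrow{f}Y)$. So the plan is: first verify that the product $\bullet$, the pushforward $f_*$ and the pullback $g^*$ on $\bM$ each carry the relevant subgroup $\mathbb{BL}$ into the appropriate $\mathbb{BL}$. Granting this, the formulas in the statement define $\bigstar$, $f_*$ and $g^*$ on $\bK_0$ unambiguously. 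The seven axioms (B-1)--(B-7) and the unit property then pass to $\bK_0$ by lifting elements along the surjection $\bB q$, performing the operation in $\bM$, and projecting back down (so, e.g., associativity on $\bK_0$ follows from associativity on $\bM$), with the unit given by $\bB q(1_X)=\Bigl[[X\xrightarrow{\op{id}_X}X]\Bigr]$; and $\bB q$ is then a Grothendieck transformation precisely because of the shape of these formulas. The same reasoning will also show that this is the unique bivariant structure making $\bB q$ a Grothendieck transformation, as claimed in Theorem \ref{thm:main}(i).

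The pushforward case is immediate. A generator of $\mathbb{BL}(\m V/X\xrightarrow{gf}Z)$ comes from a blow-up diagram $S\xrightarrow{i}X'\xrightarrow{h}X$ with $gfh$ and $gfhi$ smooth; applying $f_*$ replaces the anchor maps to $X$ by their composites with the proper map $f$, and the result is exactly the generator of $\mathbb{BL}(\m V/Y\xrightarrow{g}Z)$ attached to the blow-up diagram $S\xrightarrow{i}X'\xrightarrow{fh}Y$, since the composites $g\circ(fh)$ and $g\circ(fh)\circ i$ are unchanged and $fh$ is still proper.

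The substance is in the product and pullback cases, and the single geometric input will be Lemma \ref{key-lemma}(2). Two observations make it applicable. First, in $\bM(\m V/-)$ every fiber square is independent, so a pullback is taken along a \emph{cartesian} square; hence for a pullback along $g'\colon X'\to X$ lying over $g\colon Y'\to Y$ one has $X'=X\times_YY'$, and therefore $X_1\times_XX'=X_1\times_YY'$ for any morphism $X_1\to Y$. Second, in a generator of $\mathbb{BL}(\m V/X\xrightarrow{f}Y)$ built from $S\xrightarrow{i}X_1\xrightarrow{h}X$ the composites $fh\colon X_1\to Y$ and $fhi\colon S\to Y$ are smooth, so $S\hookrightarrow X_1$ is a closed embedding of $Y$-smooth varieties and Lemma \ref{key-lemma}, applied with base $Y$ and smooth map $fh$, shows that its blow-up diagram commutes with arbitrary base change in $Y$. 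Combining the two, the $g^*$-image of such a generator is the base change of the blow-up diagram along $Y'\to Y$, which by the Lemma is again a blow-up diagram, and one checks, using stability of properness, closed embeddings and smoothness under base change and composition, that it is a legitimate generator of $\mathbb{BL}(\m V/X'\xrightarrow{f'}Y')$. For the product, unwinding the definition in Theorem \ref{UBT} one sees that $[V\xrightarrow{p}X]\bullet[W\xrightarrow{k}Y]$ forms the fiber product $V\times_YW$ over $X$; so multiplying a generator of $\mathbb{BL}(\m V/X\xrightarrow{f}Y)$ by $[W\xrightarrow{k}Y]$, resp.\ multiplying $[V\xrightarrow{p}X]$ by a generator of $\mathbb{BL}(\m V/Y\xrightarrow{g}Z)$, amounts to base changing the blow-up diagram along the smooth map $k$ (into $Y$), resp.\ along the smooth map $fp\colon V\to Y$, and Lemma \ref{key-lemma}(2) again identifies the outcome with a generator of $\mathbb{BL}(\m V/X\xrightarrow{gf}Z)$, once one checks that the new anchor and embedding maps are proper, closed and, after composition with the structure map to $Z$, smooth. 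I expect the only real difficulty to be the bookkeeping — keeping the nested fiber squares straight and verifying the smoothness of each composite landing in $Y$ or $Z$ — but every such verification reduces to base change and composition of smooth (resp.\ proper) morphisms together with the one statement of Lemma \ref{key-lemma}(2), so there is no conceptual obstacle.
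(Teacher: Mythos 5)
Your overall approach is exactly the paper's: reduce everything to well-definedness, dispose of the pushforward case as immediate, and reduce the product and pullback cases to compatibility of blow-up diagrams with base change. The pullback case you handle correctly (base change along $Y'\to Y$, with both the ambient and the center smooth over $Y$, is exactly what Lemma \ref{key-lemma}(2) gives). Two remarks on the product case, though. First, in $[W\xrightarrow{k}Y]\in\bM(\m V/Y\xrightarrow{g}Z)$ the map $k$ is \emph{proper}, not smooth — only $g\circ k$ is smooth — so "the smooth map $k$" should read "the proper map $k$"; it is precisely because this base change is not flat that Lemma \ref{key-lemma}(2) (and not the elementary flat base change theorem) is needed there. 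Second, and more substantively, for the other sub-case — multiplying $[V\xrightarrow{p}X]$ by a generator of $\mathbb{BL}(\m V/Y\xrightarrow{g}Z)$ — the base change is along $fp\colon V\to Y$, but the ambient $Y'$ and center $S$ of that generator are smooth over $Z$, not over $Y$; so this base change is \emph{not} a base change in the base of the smooth morphisms, and Lemma \ref{key-lemma}(2) does not literally apply. The paper handles this sub-case differently: since $fp$ is smooth hence flat, one invokes the general fact that blow-up commutes with flat base change (no smoothness hypotheses on the ambient or the center are needed). So you have invoked the same lemma twice where the paper uses the lemma once and plain flat base change once; the conclusion is right, but as written your argument for that sub-case is not. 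Filling this in is routine, and everything else — including the typo $X_1\to Y$ which should be $X_1\to X$ — is fine and agrees with the paper.
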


\begin{proof} It suffices to show the well-definedness of these three operations. \\

(i) $ \Bigl[V \xrightarrow {h}  X]\Bigr]\bigstar  \Bigl[W  \xrightarrow {k}  Y]\Bigr] := \Bigl[V \xrightarrow{h}  X] \bullet [W  \xrightarrow{k}  Y]\Bigr]$ is well-defined: Let 
$$ \alpha = [Bl_{S_1}X'\to X] - [E_1\to X] -  [X'\to X] + [S_1\to X] \in \mathbb {BL}(\m V/ X \xrightarrow{f}  Y)$$
and
$$ \beta = [Bl_{S_2}Y'\to Y] - [E_2\to Y] -  [Y'\to Y] + [S_2\to Y] \in \mathbb {BL}(\m V/ Y \xrightarrow {g}  Z)$$
be given. Then we have
\begin{align*}
& \left ([V \xrightarrow {h}  X] + \alpha \right ) \bullet \left ([W  \xrightarrow {k}  Y] + \beta \right) \\
& =  [V \xrightarrow {h}  X] \bullet [W  \xrightarrow {k}  Y]  + [V \xrightarrow {h}  X] \bullet \beta  + \alp \bullet \left ([W  \xrightarrow {k}  Y] + \beta \right),
\end{align*}
and we show that 
$$[V \xrightarrow{h}  X] \bullet \beta  + \alp \bullet \left ([W  \xrightarrow{k}  Y] + \beta \right)
\in \mathbb {BL}(\m V/ X \xrightarrow{g \circ f}  Z).$$
For this end it suffices to show that 
$$[V \xrightarrow{h}  X] \bullet \beta \in \mathbb {BL}(\m V/ X \xrightarrow{g \circ f}  Z)$$
and
$$ \alp \bullet [H  \xrightarrow{j}  Y]  \in \mathbb {BL}(\m V/ X \xrightarrow{g \circ f}  Z)$$
for any $[H  \xrightarrow{j}  Y] \in \bM(\m V/ Y \xrightarrow{g}  Z)$.

For the proof of $ \alp \bullet [H  \xrightarrow{j}  Y]  \in \mathbb {BL}(\m V/ X \xrightarrow{g \circ f}  Z)$, consider the following diagram:
$$
\xymatrix{
& \widetilde {E_1}  \ar[ld] \ar'[d][dd]_{\widetilde{q'}} \ar [rr]^{\widetilde {i'}} && Bl_{\widetilde{S_1}}\widetilde{X'}  \ar[ld] \ar[dd]^{\widetilde q} \\
E_1 \ar[dd]_{q'} \ar[rr]^(.65){i'} && Bl_{S_1}X'  \ar[dd]^(.65){q} \\
& \widetilde {S_1} \ar[ld] \ar'[r][rr]^{\widetilde i}  && \widetilde {X'} \ar[ld] \ar[r]^{\widetilde {h}} & \,\,\widetilde {X} \ar[ld]^{k} \ar[r] & \, \, H \ar[ld]^{j}\\
S_1 \ar[rr]_{i} && X' \ar[r]_{h} &  X \ar[r]_{f} & Y \ar[r]_{g} & Z,}
$$
which by Lemma \ref{key-lemma} (2) is the pullback by the proper morphism $j: H \to Y$
of the following blow-up diagram:
\begin{equation}\label{bd}
\begin{CD}
E_1 @> i'>> Bl_{S_1}X' \\
@V q' VV @VV q V \\ 
S_1 @> i >> X' @> h >> X  @> f >> Y.
\end{CD} 
\end{equation}
Then we have that
\begin{align*}
& \alp \bullet [H  \xrightarrow{j}  Y] \\
& = [Bl_{\widetilde{S_1}}\widetilde{X'} \xrightarrow{k\widetilde{h}\widetilde{q}}  X] - [\widetilde {E_1} \xrightarrow{k\widetilde{h} \widetilde{q} \widetilde {i'}}  X] - [\widetilde {X'} \xrightarrow {k\widetilde{h}}  X] + [\widetilde {S_1} \xrightarrow{k\widetilde{h}\widetilde {i}}  X], 
\end{align*}
which is in $\bM(\m V/ X \xrightarrow{g \circ f}  Z).$ 
In the same way one gets
 $$[V \xrightarrow{h}  X] \bullet \beta \in \mathbb {BL}(\m V/ X \xrightarrow{g \circ f}  Z).$$
Here we are using the fact that the pullback of the corresponding blow-up diagram for
 $\beta$ under the morphism $fh$ is again a similar blow-up diagram, since $fh$ is smooth and therefore flat.

(ii) The well-definedness of $f_*\left[[V \xrightarrow{p}  X]\right ] :=  \left[[V  \xrightarrow{f \circ p}  Y] \right]$ is obvious.

(iii) $g^*\left[[V  \xrightarrow{p}  X]\right ]  := \left [g^*[V  \xrightarrow{p}  X] \right ]$ is well-defined. The proof based on Lemma \ref{key-lemma} (2) is similar to that of (i) above, so omitted.
\end{proof}

Note that the proof of the well-definedness of the product- and pullback operations above 
used Lemma \ref{key-lemma} (2), as well as the fact that the smooth and therefore flat
pullback of a blow-up diagram is again a blow-up diagram.

\begin{rem} Here we note (cf. \cite{EH}) that in general the pullback of a blow-up is \emph{not} the blow-up of the pullback, i.e., consider the following pullback diagram, which is obtained by pulling back a blow-up diagram by the morphism $\widetilde X \to X$:
$$
\xymatrix{
& \widetilde {E}  \ar[ld] \ar'[d][dd]_{\widetilde{q'}} \ar [rr]^{\widetilde {i'}} &&\widetilde {Bl_{S}X}  \ar[ld] \ar[dd]^{\widetilde q} \\
E \ar[dd]_{q'} \ar[rr]^(.65){i'} && Bl_{S}X  \ar[dd]^(.65){q} \\
& \widetilde {S} \ar[ld] \ar'[r][rr]^{\widetilde i}  && \widetilde {X} \ar[ld] \\
S \ar[rr]_{i} && X}
$$
Then the diagram 
$$\begin{CD}
\widetilde {E} @> \widetilde {i'}>> \widetilde {Bl_{S}X} \\
@VV \widetilde {q'} V  @VV \widetilde {q} V \\
\widetilde {S}  @> \widetilde {i} >> \widetilde {X} 
\end{CD} 
$$
is in general \emph{not} a blow-up diagram, i.e., $\widetilde {Bl_{S}X}$ is not the blow-up of $\widetilde {X}$ along $\widetilde {S}$. A typical example is the situation that $S$ is a point of the $2$-dimensional projective space $X = \bP^2$, $\widetilde X$ is a smooth curve going through the point $S$ and $h:\widetilde X \to X$ is the inclusion map.
\end{rem}

Let us finish this section with the following

\begin{rem} In the case when $Y$ is a point, the blow-up diagram defining 
${\mathbb {BL}(\m V/X \xrightarrow{f}  pt)}$ is nothing but the following:
$$\begin{CD}
E @> i'>> Bl_{S}X' \\
@VV q' V  @VV q V \\
S @> i >> X' @> h >> X \:,
\end{CD} 
$$
such that $h: X' \to X$ is proper, $X'$ and $S$ are nonsingular,
and $q: Bl_{S}X' \to X'$ is the blow-up of $X'$ along $S$ with $q':E \to S$  the exceptional divisor map.

Hence ${\mathbb {BL}(\m V/X \xrightarrow{f}  pt)}$ is nothing but $BL(\m V/X)$, i.e., we have 
by Bittner's theorem
$$\bK_0(\m V /X \to pt) \simeq K_0(\m V/X)$$
 in the compact complex analytic context, as well as in the algebraic context over a base field of characteristic zero. Finally note that we  always have a group homomorphism
 $$\bK_0(\m V /X \to pt) \to K_0(\m V/X)\:,$$
 since $Bl_{S}X'\backslash E\simeq X'\backslash S$ in the diagram above so that
 $$[Bl_{S}X'\to X] - [E\to X]= [X'\to X] - [S\to X] \in K_0(\m V/X)\:.$$
\end{rem}

\section {Motivic bivariant Chern and Hirzebruch class transformations}

Now we are ready to prove the following main theorem, which is about the \emph {motivic bivariant Chern and Hirzebruch class transformations}.

\begin{thm}\label{thm:main2} Let $\m V=\m V^{qp}_k$ be the category of quasi-projective algebraic varieties 
 over a base field $k$ of any characteristic.
\begin{enumerate}
\item[(i)]  There exists a unique Grothendieck transformation
$$mC_y=\La_y^{mot}: \bK_0(\m V_k^{qp}/ - ) \to \bK_{alg}( - )\otimes \bZ[y]$$
satisfying the normalization condition that for a smooth morphism $f: X \to Y$ 
 the following equality holds in $\bK_{alg}(X \xrightarrow {f} Y) \otimes \bZ[y]$:
$$\La_y^{mot}\Bigl(\Bigl[[X  \xrightarrow{\op {id}_X}  X]\Bigr]\Bigr) = \La_y(T^*_f) \bullet  \m O_{f}.$$
\item[(ii)]  Let $T_y : \bK_0(\m V_k^{qp} / - ) \to \bH(-) \otimes \bQ[y]$
be defined as the composition 
$\tau \circ \La_y^{mot}$, renormalized by $\cdot (1+y)^i$ on $\bH^i(-)\otimes \bQ[y]$.
Here $\bH$ is either the operational bivariant Chow group,
or the even degree bivariant homology theory for $k=\bC$, with $\tau$ the corresponding
Riemann-Roch transformation. \\
Then $T_y$ 
is the unique Grothendieck transformation
satisfying the normalization condition that for a smooth morphism $f: X \to Y$  the following equality holds in $\bH(X \xrightarrow {f} Y) \otimes \bQ[y]$:
$$T_y\Bigl(\Bigl[[X  \xrightarrow{\op {id}_X}  X]\Bigr]\Bigr) = T^*_y(T_f) \bullet [f].$$
\end{enumerate}
\end{thm}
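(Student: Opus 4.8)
The plan is to build $mC_y=\La_y^{mot}$ in two stages: first produce a Grothendieck transformation out of the free theory $\bM(\m V/-)$ via the twisting construction of Corollary \ref{twisting}, then show it descends across the quotient $\bB q\colon\bM(\m V/-)\to\bK_0(\m V/-)$ of Theorem \ref{theorem}. For the first stage I apply Corollary \ref{twisting} to the commutative (ungraded) bivariant theory $\bB=\bK_{alg}(-)\otimes\bZ[y]$, carrying the stable orientation $\m O_f$ on smooth morphisms, and to the multiplicative characteristic class $c\ell(E):=\La_y(E^{*})=\sum_{i}[\La^{i}E^{*}]\,y^{i}\in\bK_{alg}^{*}(-)\otimes\bZ[y]$ of the dual bundle. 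The hypotheses of Corollary \ref{twisting} are immediate: $\La_y$ is multiplicative on short exact sequences, $\La_y$ of the zero bundle $T_{pt}$ equals $1_{pt}$, and $c\ell$ commutes with $\m O_f$ because $\bK_{alg}$ is commutative (Section \ref{FM-BT}). This yields a unique Grothendieck transformation $\ga_{c\ell}\colon\bM(\m V/-)\to\bK_{alg}(-)\otimes\bZ[y]$ with $\ga_{c\ell}([X\xrightarrow{\op{id}_X}X])=\La_y(T_f^{*})\bullet\m O_f$ for every smooth $f\colon X\to Y$; since every generator of $\bM(\m V/X\xrightarrow{f}Y)$ has the form $[V\xrightarrow{p}X]=p_{*}([V\xrightarrow{\op{id}_V}V])$ with $f\circ p$ smooth, this forces $\ga_{c\ell}([V\xrightarrow{p}X])=p_{*}\!\bigl(\La_y(T_{f\circ p}^{*})\bullet\m O_{f\circ p}\bigr)$.

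The heart of the matter --- and the step I expect to be the main obstacle --- is to show $\ga_{c\ell}$ annihilates the subgroup $\mathbb{BL}(\m V/X\xrightarrow{f}Y)$, for then it descends (uniquely, as $\bB q$ is onto) to a Grothendieck transformation $\La_y^{mot}\colon\bK_0(\m V/-)\to\bK_{alg}(-)\otimes\bZ[y]$ with the asserted normalization. Given a relative blow-up diagram as in the definition of $\mathbb{BL}(\m V/X\xrightarrow{f}Y)$, each of the classes $[Bl_SX'\xrightarrow{hq}X]$, $[E\xrightarrow{hiq'}X]$, $[X'\xrightarrow{h}X]$, $[S\xrightarrow{hi}X]$ is a proper pushforward along $h$ of a class over $X'$, so, $\ga_{c\ell}$ being compatible with proper pushforward, it is enough to verify in $\bK_{alg}(X'\xrightarrow{fh}Y)\otimes\bZ[y]$ the identity
\begin{multline*}
q_{*}\!\bigl(\La_y(T_{fhq}^{*})\bullet\m O_{fhq}\bigr)-(iq')_{*}\!\bigl(\La_y(T_{fhiq'}^{*})\bullet\m O_{fhiq'}\bigr)\\
=\La_y(T_{fh}^{*})\bullet\m O_{fh}-i_{*}\!\bigl(\La_y(T_{fhi}^{*})\bullet\m O_{fhi}\bigr).
\end{multline*}
This is exactly the blow-up formula for $\La_y$-classes of relative cotangent complexes in algebraic $K$-theory. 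By Lemma \ref{key-lemma}(3) the maps $q,i,i'$ are of finite Tor-dimension and $i,i'$ are regular embeddings, so every term is well-defined in the bivariant $K$-theory of perfect complexes, and by Lemma \ref{key-lemma}(1)--(2) the whole configuration is compatible with base change in $X$, so the identity falls within the scope of the $K$-theoretic blow-up formula of \cite[Chapter IV, Theorem 1.2.1 and (1.2.6)]{Gros} (for the blow-up of a regularly embedded smooth subscheme, relative to $Y$); that formula, combined with the projective bundle structure of $q'\colon E\to S$, yields the displayed equality. I should stress that the higher direct images $R^{>0}q_{*}$ genuinely contribute here --- already $Rq_{*}\Omega^{p}=\Omega^{p}$ fails for the blow-up of a point on a surface --- so this precise formula is the technical core of the argument. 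Uniqueness of $\La_y^{mot}$ is then formal, since $\bK_0(\m V/-)$ is generated, under proper pushforward and the surjection $\bB q$, by the classes $\bigl[[X\xrightarrow{\op{id}_X}X]\bigr]$, on which the normalization determines any Grothendieck transformation.

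For part (ii), I set $T_y:=\tau\circ\La_y^{mot}\colon\bK_0(\m V/-)\to\bH(-)\otimes\bQ[y]$, where $\tau\colon\bK_{alg}\to\bH\otimes\bQ$ is the bivariant Riemann--Roch transformation of \cite{Fulton-MacPherson} (resp.\ of \cite[Example 18.3.19]{Fulton-book}), extended $\bZ[y]$-linearly, followed by the renormalization multiplying the cohomological degree-$i$ part by $(1+y)^{i}$. A composite of Grothendieck transformations is one, and the renormalization is an automorphism of the (graded-commutative) bivariant theory $\bH(-)\otimes\bQ[y]$ --- it respects $\bullet$ since cohomological degrees add, fixes $\bH^{0}$ and hence the units, and commutes with pushforward and pullback since these preserve the grading --- so $T_y$ is a Grothendieck transformation. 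It remains to evaluate it on $\bigl[[X\xrightarrow{\op{id}_X}X]\bigr]$ for $f\colon X\to Y$ smooth of relative dimension $d$: using that $\tau$ is a Grothendieck transformation and the Riemann--Roch formula \eqref{RR-formula}, i.e.\ $\tau(\m O_f)=td(T_f)\bullet[f]$, one gets $\tau\bigl(\La_y^{mot}\bigl(\bigl[[X\xrightarrow{\op{id}_X}X]\bigr]\bigr)\bigr)=ch(\La_y(T_f^{*}))\bullet\tau(\m O_f)=\bigl(ch(\La_y(T_f^{*}))\cdot td(T_f)\bigr)\bullet[f]$. A short Chern-root computation identifies $ch(\La_y(E^{*}))\cdot td(E)=\prod_{i}\frac{\alpha_i(1+ye^{-\alpha_i})}{1-e^{-\alpha_i}}$, which the renormalization carries to $(1+y)^{\op{rank}E}\prod_{i}Q_y(\alpha_i)=(1+y)^{\op{rank}E}\,T_y^{*}(E)$; since the renormalization also sends the relative fundamental class $[f]$ --- which sits in cohomological degree $-d$ --- to $(1+y)^{-d}[f]$, and $\op{rank}T_f=d$, the scalars $(1+y)^{\pm d}$ cancel in the $\bullet$-product and we arrive at $T_y\bigl(\bigl[[X\xrightarrow{\op{id}_X}X]\bigr]\bigr)=T_y^{*}(T_f)\bullet[f]$, the required normalization. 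Uniqueness of $T_y$ follows from the same generation argument as before.
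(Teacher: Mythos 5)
Your proposal is correct and takes essentially the same route as the paper: the transformation is produced on $\bM(\m V/-)$ by the twisting construction (Corollary \ref{twisting}) with the multiplicative class $\La_y((-)^*)$ and orientation $\m O_f$, its vanishing on $\mathbb{BL}$ is reduced via proper pushforward along $h$ to a $K$-theoretic blow-up identity for the sheaves $\Omega^p$ established from \cite[Chapter IV, Theorem 1.2.1 and (1.2.6)]{Gros} together with Lemma \ref{key-lemma}, and part (ii) follows from the bivariant Riemann--Roch formula $\tau(\m O_f)=td(T_f)\bullet[f]$ and the same Chern-root computation with the $(1+y)$-renormalization. The only cosmetic difference is that you summarize Gros's result as a blow-up formula rather than spelling out the three quasi-isomorphisms (a) $\Omega^p_{fh}\xrightarrow{\sim}R^0q_*\Omega^p_{fhq}$, (b) $R^kq_*\Omega^p_{fhq}\xrightarrow{\sim}i_*R^kq'_*\Omega^p_{fhiq'}$ for $k\geq 1$, and (c) $\Omega^p_{fhi}\xrightarrow{\sim}R^0q'_*\Omega^p_{fhiq'}$, whose alternating sum is exactly what produces your displayed identity --- your emphasis on the role of $R^{>0}q_*$ correctly locates the technical heart.
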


\begin{proof} 
Uniqueness follows from
$$\Bigl[[V \xrightarrow{h}  X]\Bigr]=
h_*\Bigl(\Bigl[[V \xrightarrow {\op {id}_V} V]\Bigr]\Bigr)\in 
\bK(\m V/X  \xrightarrow{f}  Y)$$
for $h: V\to X$ a proper morphism with $f \circ h$ smooth.
So we simply define in this case
$$\ga_{c\ell}\Bigl(\Bigr[[V \xrightarrow{h}  X]\Bigr]\Bigr) := 
h_* (c\ell (T_{fh}) \bullet \theta(fh)).$$
Here the Grothendieck transformation $\ga_{c\ell}$ is the following:

\begin{itemize}
\item The  motivic bivariant Chern class transformation in (i) 
$$mC_y=\La_y^{mot}: \bK_0(\m V_k^{qp}/ - ) \to \bK_{alg}( - )\otimes \bZ[y]$$
corresponds to the multiplicative characteristic class
$$c\ell(W):= \La_y(W^*)\in K^0(-)[y] \subset  K^0(-)[y,(1+y)^{-1}] $$ 
given by the total $\lambda$-class of the dual vector bundle,
with $\theta(fh)=\m O_{fh}=[\m O_{V}]$.
\item The bivariant Hirzebruch class transformation in (ii)
 $$T_y : \bK_0(\m V_k^{qp} / - ) \to \bH(-) \otimes \bQ[y]$$
 corresponds to the multiplicative characteristic class
$$c\ell(W):= T^*_y(W)\in \bH^*(-)\otimes \bQ[y] $$ 
given by the Hirzebruch class,
with $\theta(fh)=[fh]$ the relative fundamental class.
\end{itemize}

Moreover, these characteristic classes commute with the corresponding
orientations $\theta$ of a smooth morphism (as already explained before).
So  we only have to show that 
\begin{itemize}
\item the corresponding Grothendieck transformation 
$$\ga_{c\ell}=:\La_y^{mot}: \bM(\m V/X  \xrightarrow{f}  Y) \to \bK(X  \xrightarrow{f}  Y)$$
from Corollary \ref{twisting}
vanishes on the subgroup $\mathbb {BL}(\m V/  X \xrightarrow{f}  Y)$, and 
\item the relation $\ga_{T^*_y}=\tau\circ \La_y^{mot}$ up to the 
renormalization by the multiplication  with $(1+y)^i$ on $\bH^i(-)\otimes \bQ[y]$.
\end{itemize}

(i) \underline {$\La_y^{mot}: \bM(\m V/X  \xrightarrow{f}  Y) \to \bK(X  \xrightarrow{f}  Y)$
vanishes on $\mathbb {BL}(\m V/  X \xrightarrow{f}  Y)$}:
Let us identify the vector bundle $T^*_{fh}$ for the smooth morphism $fh: V\to Y$
with the corresponding locally free sheaf $\Omega^1_{fh}$ of sections given by the relative one-forms, so that
$$\Lambda_y^{mot}([V \xrightarrow{h}  X]) := \sum_{p \geq 0} h_*([\Omega^p_{fh}]
\bullet \m O_{fh}) \cdot y^p\:.$$
Note that by the definition of relative perfectness, $D^b_{id-perf}(V)=D^b_{fh-perf}(V)$ for
the smooth morphism $fh$, so that
$$\bullet \m O_{fh}: \bK(V \xrightarrow{id_V}  V)=K_0(D^b_{id-perf}(V))
\xrightarrow{\sim}
K_0(D^b_{fh-perf}(V))=\bK(V \xrightarrow{fh}  X)\:,$$
with $h_*(\;-\; \bullet \m O_{fh})$ induced by the total direct image
$$Rh_*: D^b_{id-perf}(V)= D^b_{fh-perf}(V)\to D^b_{f-perf}(X)\:.$$

Consider now a blow-up diagram
$$\begin{CD}
E @> i'>> Bl_{S}X' \\
@V q' VV  @VV q V \\
S @>> i > X' @>>h > X  @>> f > Y\:,
\end{CD}$$
with $h$ proper and $i$ a closed embedding such that $fh$ and $fhi$ are smooth.
Then we have by \cite[Chapter IV, Theorem 1.2.1 and (1.2.6) on p.74]{Gros} that the following natural morphisms are quasi-isomorphisms for all $p\geq 0$
(and note that Gros is working in \cite[Chapter IV, \S 1.2]{Gros} with the corresponding 
\emph{relative De Rham complexes}):
\begin{enumerate}
\item[(a)] $\Omega^p_{fh} \xrightarrow{\sim}  R^0q_*\Omega^p_{fhq}$.
\item[(b)] $R^kq_*\Omega^p_{fhq} \xrightarrow{\sim}  i_*R^kq'_*\Omega^p_{fhiq'}$ for all $k\geq 1$.
\item[(c)] $\Omega^p_{fhi} \xrightarrow{\sim}  R^0q'_*\Omega^p_{fhiq'}$.
\end{enumerate}
Here (c) can be checked (\'etale) locally, so that it follows from
\cite[(1.2.6) on p.74 and (4.2.12) on p.23]{Gros}. Moreover all coherent sheaves
$\Omega^p_{fh}, \Omega^p_{fhi}$ and $\Omega^p_{fhiq'}$ for $p\geq 0$ are locally free,
since the corresponding morphisms are smooth.
Similarly all direct image sheaves $R^kq'_*\Omega^p_{fhiq'}$ for $k,p\geq 0$ are locally free,
since $q': E\to S$ is a projective bundle (e.g. compare \cite[(1.2.6) on p.74 and (4.2.12) on p.23]{Gros}). Finally the morphisms $i$ and $q$ are of finite Tor-dimension by 
Lemma \ref{key-lemma} (3), with $i$ exact, so that (a) and (b) resp.(c) can be 
considered as quasi-isomorphisms
in $ D^b_{fh-perf}(X')$ resp. $ D^b_{fhi-perf}(S)$. So one gets for all $p\geq 0$ the following equalities in $\bK_{alg}(X' \xrightarrow{fh}  Y)$:
\begin{align*}
q_* [\Omega^p_{fhq}] - i_*q'_* [\Omega^p_{fhiq'}] &= \sum_{k\geq 0}\: (-1)^k\Bigl(
[R^kq_* \Omega^p_{fhq}] - [i_*R^kq'_* \Omega^p_{fhiq'}]\Bigr)\\
&=[R^0q_* \Omega^p_{fhq}] -[i_*R^0q'_* \Omega^p_{fhiq'}] \\
&=[\Omega^p_{fh}] -i_* [\Omega^p_{fhi}] \:.
\end{align*}
And this implies the needed vanishing result:
\begin{align*}
& \Lambda_y^{mot} \left ([Bl_{S}X' \xrightarrow{hq}   X] - [E \xrightarrow{hiq'} X] - 
[X' \xrightarrow{h}  X] +  [S \xrightarrow{hi}  X] \right ) \\
& = \sum_{p \geq 0} \Bigl( \;h_*q_*([\Omega^p_{fhq}])y^p - h_*i_*q'_*([\Omega^p_{fhiq'}])y^p  
 - h_*([\Omega^p_{fh}])y^p  +  h_*i_*([\Omega^p_{fhi}])y^p \;\Bigr) \\
&= \sum_{p \geq 0} h_* \left ( q_*[\Omega^p_{fhq}] - i_*q'_* [\Omega^p_{fhiq'}] - [\Omega^p_{fh}] + i_*[\Omega^p_{fhi}] \right) y^p  = 0 \:.
\end{align*}

(ii) \underline {Proof of the relation $\ga_{T^*_y}=\tau\circ \La_y^{mot}$ up to  
renormalization}:
By composition with the bivariant Riemann--Roch transformation
$\tau: \bK_{alg}(X \xrightarrow{f}  Y) \to \bH(X \xrightarrow{f}  Y)$,
and extending linearly with respect to the coefficients $\bZ[y]$,
we get a Grothendieck transformation
$$\tau\circ \Lambda_y^{mot}: \bK_0(\m V_k^{qp} / - ) \to \bH(-) \otimes \bQ[y]\:.$$
Similarly, the renormalization $\Psi_{(1+y)}: \bH(-) \otimes \bQ[y]\to \bH(-) \otimes 
\bQ[y,(1+y)^{-1}]$
given by
$$\cdot (1+y)^i: \bH^i(-) \otimes \bQ[y]\to  \bH^i(-) \otimes \bQ[y,(1+y)^{-1}]$$
is a Grothendieck transformation, since $\bH(-)$ is a graded bivariant theory.

Now we show that our looking-for transformation $T_y=\ga_{T_y^*}$ can be defined as
$$T_y := \Psi_{(1+y)}\circ \tau \circ \Lambda_y^{mot} :\bK_0(\m V / -) \to \bH(-) \otimes \bQ[y] \subset \bH(-) \otimes  \bQ[y,(1+y)^{-1}]\:.$$
It suffices to check that for a smooth morphism $f:X \to Y$ 
$$T_y ([X \xrightarrow{id}  X]) = T^*_y(T_f) \bullet [f]
\in  \bH(X \xrightarrow{f}  Y)  \otimes \bQ[y] \:.$$
And this can be seen as follows:
\begin {align*}
 \tau \circ \Lambda_y^{mot} ([X \xrightarrow {\op {id}} X]) 
&= \tau(\lambda_y(T^*_f)\bullet \m O_{f} )\\
& =ch(\lambda_y(T^*_f)) \bullet  \tau(\Cal O_f)\\
& = ch(\lambda_y(T^*_f)) \bullet td(T_f) \bullet [f]
\end{align*}
by  the  \emph{Riemann--Roch formula} 
$$\tau(\Cal O_f)= td(T_f) \bullet [f]\:.$$
Compare with \cite [(*) on p.124]{Fulton-MacPherson}
for  $\bH$ the bivariant homology in case $k=\bC$. For $\bH=CH$ the bivariant Chow group and
$k$ of any characteristic, this follows from \cite[Theorem 18.2]{Fulton-book}, as we explain later on in Remark \ref{strong-todd}. So we get
$$\tau \circ \Lambda_y^{mot} ([X \xrightarrow {\op {id}} X])=
\left( \prod_{j=1}^{\op {rank} T_f} (1+ye^{-\alpha _j}) \prod_{j=1}^{\op {rank} T_f} \frac {\alpha_j}{1 - e^{-\alpha_j}} \right) \bullet [f] \:,$$
with $\alpha_j$ the Chern roots of $T_f$.
Here it should be noted that $[f] \in \bH^{-\op {rank}  T_f}(X \xrightarrow{f} Y)$ by 
\cite[Part II, \S 1.3]{Fulton-MacPherson}) resp. \cite[(1) on p.326]{Fulton-book}.
Moreover, the substitution $\alpha_j\mapsto \alpha_j(1+y)$ corresponds to the renormalization
$$\Psi_{(1+y)}: \bH^*(-)\otimes \bQ[y]\to \bH^*(-)\otimes \bQ[y,(1+y)^{-1}]\:,$$
since $\alpha_j\in \bH^1(-)$. So we get

\begin {align*}
& T_y ([X \xrightarrow {\op {id}} X]) = \Psi_{(1+y)}\circ \tau \circ \Lambda_y^{mot} 
([X \xrightarrow {\op {id}} X])\\
& = \left (\prod_{j=1}^{\op {rank} T_f} \left(1+ye^{-\alpha _j(1+y)} \right)
 \frac {\alpha_j (1+y)}{1 - e^{-\alpha_j(1+y)}} \right) \bullet [f]\cdot(1+y)^{-\op {rank} T_f}\\
& = \left (\prod_{j=1}^{\op {rank} T_f} \left(1+ye^{-\alpha _j (1+y)}\right)
 \frac {\alpha_j }{1 - e^{-\alpha_j(1+y)}} \right)  \bullet [f]\\
& = \left (\prod_{j=1}^{\op {rank} T_f} \left(\frac{\alpha_j(1 +y)}{1 - e^{-\alpha_j(1 +y)}} - \alpha_jy \right)\right)  \bullet [f]\\
& = T^*_y(T_f) \bullet [f] \in \bH(X \xrightarrow{f}  Y) \otimes \bQ[y]\:.
\end{align*}
\end{proof}

\begin{rem}
(1) Our construction of the Grothendieck transformation $mC_y=\La_y^{mot}: \bK_0(\m V_k^{qp}/ - )$ $\to \bK_{alg}( - )\otimes \bZ[y]$
based on \cite[Chapter IV, Theorem 1.2.1 and (1.2.6)]{Gros},
 i.e. on the properties (a),(b) and (c) in the proof above,
 also works in the algebraic context without considering only quasi-projective varieties,
if one uses the more sophisticated definition of $\bK_{alg}(X \xrightarrow {f} Y)=K_0(D^b_{f-perf}(X))$ as the Grothendieck goup of the triangulated category of $f$-perfect complexes. 

And a similar definition can also be used in the context of compact complex analytic varieties (compare \cite[Part I, \S 10.10]{Fulton-MacPherson} and \cite{Levy2}). Then it seems reasonable that one can also construct in a similar way in this compact complex analytic context the Grothendieck transformation $mC_y=\La_y^{mot}$. Here it would be enough to prove the analogues of the properties (a), (b) and (c) in the complex analytic context.\\
(2) Similarly one would like to further construct in this compact complex analytic context also the Grothendieck transformation
$T_y$ based on Levy's  K-theoretical Riemann-Roch transformation $$\alpha : \bK_{alg}(-)\to \bK^{top}_0(-)$$ from algebraic to topological
bivariant K-theory (see  \cite{Levy2}), together with the topological bivariant 
Riemann-Roch transformation $$\bK^{top}_0(-)\to \bH(-)\otimes \bQ$$ from \cite[Part I, Example 3.2.2]{Fulton-MacPherson}.
 A key result missing so far is the counterpart $\alpha(\m O_f)=\theta(f)$ of \cite[Part II, Theorem 1.4 (3)]{Fulton-MacPherson},
that $\alpha$ identifies for a smooth morphism $f: X\to Y$ the 
orientation $\m O_f:=[\m O_X]\in \bK_{alg}(X \xrightarrow {f} Y)$ with the
orientation $\theta(f)\in \bK^{top}_0(X \xrightarrow {f} Y).$
\end{rem}

Comparing the different normalization conditions for a smooth morphism
$f:X\to Y$, from Theorem \ref{thm:main2} one gets the following corollary:

\begin{cor}\label{cor-Grothendieck2} Let $\m V=\m V^{qp}_k$ be the category of quasi-projective algebraic varieties 
 over a base field $k$ of any characteristic.
Then we have the following commutative diagrams of Grothendieck transformations:
\begin{enumerate}
\item[(i)] $$\xymatrix{
&   \bK_0(\m V_k^{qp} / - )  \ar [dl]_{mC_0} \ar [dr]^{{T_{0}}} \\
{\bK_{alg}( - ) } \ar [rr] _{\tau}& &  \bH(-) \otimes \bQ.}$$
\item[(ii)] $$\xymatrix{
&   \bK_0(\m V_k^{qp} / - )  \ar [dl]_{\epsilon} \ar [dr]^{{T_{-1}}} \\
{\tilde{\bF}( - ) } \ar [rr] _{\gamma}& &  CH(-) \otimes \bQ,}$$
if $k$ is of characteristic zero. Here $\epsilon$ is the unique Grothendieck transformation
satisfying the normalization condition $\epsilon\Bigl(\Bigl[[X  \xrightarrow{\op {id}_X}  X]\Bigr]\Bigr)=\jeden_f$ for a smooth morphism $f: X \to Y$. And similarly for the bivariant Chern class transformation
$\gamma: \bF( - ) \to A^{PI}( - )\otimes \bQ \supset  CH(-) \otimes \bQ$ in case $k=\bC$.
\item[(iii)] Assume $k$ is of characteristic zero. Then the associated covariant transformations in Theorem \ref{thm:main2} (i) and (ii) 
agree under the identification 
$$\bK_0(\m V_k^{qp}/X\to pt) \simeq K_0(\m V_k^{qp}/X)$$
with  the motivic Chern and Hirzebruch class transformations $mC_y$ and ${T_y}_*$.
\end{enumerate}
\end{cor}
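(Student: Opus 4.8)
The plan is to deduce all three assertions from the \emph{uniqueness principle} already used in the proof of Theorem~\ref{thm:main2}: in $\bK_0(\m V/X\xrightarrow{f}Y)$ one has $[[V\xrightarrow{h}X]]=h_*([[V\xrightarrow{\op{id}_V}V]])$ for every proper $h$ with $f\circ h$ smooth, so any Grothendieck transformation out of $\bK_0(\m V/-)$ is completely determined by the values it takes on the classes $[[V\xrightarrow{\op{id}_V}V]]$, hence by its normalization on smooth morphisms. Therefore, for each of the three diagrams it suffices to compare, for a smooth morphism $f\colon X\to Y$, the two relevant normalization values; everything else is formal.

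\emph{Part (i).} By construction $T_y=\Psi_{(1+y)}\circ\tau\circ\La_y^{mot}$, where $\Psi_{(1+y)}$ multiplies the degree-$i$ part $\bH^i(-)\otimes\bQ[y]$ by $(1+y)^i$. Setting $y=0$ turns $\Psi_{(1+0)}$ into the identity, so $T_0=\tau\circ\La_0^{mot}=\tau\circ mC_0$ on the nose, which is exactly the asserted commutativity. (Equivalently one checks the normalization: $T_0([[X\xrightarrow{\op{id}_X}X]])=T^*_0(T_f)\bullet[f]=td(T_f)\bullet[f]=\tau(\m O_f)=\tau(mC_0([[X\xrightarrow{\op{id}_X}X]]))$, using $T^*_0=td$ and the Riemann--Roch formula $\tau(\m O_f)=td(T_f)\bullet[f]$ recalled in \S\ref{FM-BT}.)

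\emph{Part (ii).} First one produces $\epsilon$. Applying Theorem~\ref{univ} to $\bB=\tilde{\bF}$ (over $k$ of characteristic zero) with the stable orientation $\theta(f)=\jeden_f$ gives a Grothendieck transformation $\ga_\theta\colon\bM(\m V/-)\to\tilde{\bF}(-)$ with $\ga_\theta([V\xrightarrow{h}X])=h_*(\jeden_{fh})$; one then checks, in the same manner as in the proof of Theorem~\ref{thm:main2}(i) and using Lemma~\ref{key-lemma}(2), that $\ga_\theta$ annihilates $\mathbb{BL}(\m V/X\xrightarrow{f}Y)$, the key point being the fibrewise Euler-characteristic identity $q_*\jeden_{Bl_{S}X'}-(i\circ q')_*\jeden_E=\jeden_{X'}-i_*\jeden_S$ (the two blow-up terms cancel along the centre $S$, where both maps are the same $\bP^{c-1}$-bundle, $c=\op{codim}S$). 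Thus $\ga_\theta$ descends to the required (and, by the uniqueness principle, unique) Grothendieck transformation $\epsilon\colon\bK_0(\m V/-)\to\tilde{\bF}(-)$ with $\epsilon([[X\xrightarrow{\op{id}_X}X]])=\jeden_f$. Now $\gamma\circ\epsilon$ and $T_{-1}$ are both Grothendieck transformations $\bK_0(\m V/-)\to CH(-)\otimes\bQ$ (resp.\ $\to A^{PI}(-)\otimes\bQ$ if $k=\bC$), so by uniqueness the diagram commutes provided one knows the \textbf{strong normalization} $\gamma(\jeden_f)=c(T_f)\bullet[f]$ for $f$ smooth, since $T^*_{-1}=c$ gives $T_{-1}([[X\xrightarrow{\op{id}_X}X]])=c(T_f)\bullet[f]$. \textbf{Proving this strong normalization of the bivariant Chern class transformation is the main obstacle of the whole corollary.} For Ernstr\"om--Yokura's $\gamma\colon\tilde{\bF}\to CH$ (resp.\ $\gamma\colon\bF\to A^{PI}$) I would extract it from the construction in \cite{EY2} (resp.\ \cite{EY1}): that construction rests on the calculus of constructible functions, on the surjectivity of $c_*\colon F(-)\to CH_*(-)$, and on the covariant normalization $c_*(\jeden_X)=c(TX)\cap[X]$ for $X$ smooth, and the bivariant value $\gamma(\jeden_f)$ is pinned down by this covariant datum together with compatibility with proper pushforward and with base change. (For Brasselet's $\gamma\colon\bF\to\bH$ the analogous property is left open, cf.\ the problems listed in \S\ref{intro}.)

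\emph{Part (iii).} Take $Y=pt$. By Bittner's Theorem~\ref{bittner} and the identification $\bK_0(\m V_k^{qp}/X\to pt)\simeq K_0(\m V_k^{qp}/X)$ established at the end of the previous section (for $k$ of characteristic zero), it remains to match the covariant shadows. The covariant transformation attached to $\La_y^{mot}$ sends a generator $[V\xrightarrow{h}X]$ with $V$ smooth to $h_*(\lambda_y([T^*V])\cap[\m O_V])$, which is exactly the normalization characterizing the motivic Chern class transformation $mC_y$ of \cite{BSY1}; since by resolution of singularities such generators span $K_0(\m V/X)$ and both transformations are well defined on this quotient, they coincide. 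Post-composing with $\tau_*=td_*$ and the renormalization $\times(1+y)^{-i}$ on $H_i(-)\otimes\bQ[y]$ then identifies the covariant shadow of $T_y$ with ${T_y}_*$; equivalently one matches the normalization ${T_y}_*([X\xrightarrow{\op{id}_X}X])=T^*_y(TX)\cap[X]$. This part is routine granted Bittner's theorem and the characterizations of $mC_y$ and ${T_y}_*$ from \cite{BSY1}.
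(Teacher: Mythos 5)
Your parts (i) and (iii) are fine and match the paper's reasoning: (i) follows because $\Psi_{(1+0)}=\mathrm{id}$, and (iii) follows by matching the covariant normalizations via Bittner's theorem. The proof of existence and uniqueness of $\epsilon$ via Theorem~\ref{univ} and the blow-up vanishing is also the right structure, and your Euler-characteristic argument for the vanishing on $\mathbb{BL}$ is essentially what the paper does ($Bl_{S}X'\setminus E\simeq X'\setminus S$ over $X$).

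The genuine gap is in part (ii), and it is exactly the point you flag but do not close. You correctly reduce the commutativity to the strong normalization $\gamma(\jeden_f)=c(T_f)\bullet[f]$, but ``I would extract it from the construction in \cite{EY2}'' is not an argument. The paper proves it concretely: by the \emph{Verdier--Riemann--Roch theorem} for MacPherson's Chern class $c_*$ (as in \cite{Yokura-VRR} and \cite[Corollary~2.1~(4)]{BSY1}), for any fiber square over $f$ with $f'\colon X'\to Y'$ the pullback of the smooth morphism $f$, one has $c_*\circ f'^*=c(T_{f'})\cap (f'^*\circ c_*)$, i.e.\ $c_g(\jeden_f)=c(T_{f'})\cap f'^*$; and by the very definition of the operational bivariant class $c(T_f)\bullet[f]$ in \cite[p.325--326]{Fulton-book}, having $c_g(\jeden_f)=c(T_{f'})\cap f'^*$ for all base changes $g$ \emph{is} the statement $\gamma(\jeden_f)=c(T_f)\bullet[f]$. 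There is a second, related omission: before you can apply Theorem~\ref{univ} to $\bB=\tilde{\bF}$ you must verify that $\jeden_f$ actually lies in $\tilde{\bF}(X\xrightarrow{f}Y)$, i.e.\ satisfies the Ernstr\"om--Yokura conditions ($\tilde{\bF}$--1) and ($\tilde{\bF}$--2); this is again a consequence of the same Verdier--Riemann--Roch formula together with compatibility of $c(T_{f'})\cap{}$ with flat pullback \cite[Theorem 3.2(d)]{Fulton-book}. You should supply both of these verifications explicitly, rather than deferring to the constructions of \cite{EY1, EY2}.
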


\begin{proof} Everything follows from the different normalization conditions for a smooth morphism $f:X\to Y$, except (ii). First we explain the existence of the Grothendieck transformation
$$\epsilon: \bK_0(\m V_k^{qp} / - )\to \tilde{\bF}( - ) $$
to Ernstr\"{o}m--Yokura's bivariant theory of constructible functions,
resp.  in case $k=\bC$ to Fulton--MacPherson's bivariant theory $\bF( - )$ of constructible functions satisfying the local Euler condition.

(a) Let us first consider the last case. Since $f: X \to Y$ is a smooth morphism, it satisfies trivially the local Euler condition so that 
$ \jeden_f:=\jeden_X  \in \bF(X  \xrightarrow{f }  Y)$.
Moreover, $\theta(f):=\jeden_f$ is a 
stable orientation for the smooth morphism $f$, which commutes with the trivial multiplicative characteristic class
$c\ell(V):=1_X \in \bF(X  \xrightarrow{id_X}  X)$ of a vector bundle $V$ on $X$.
So by Theorem \ref{univ}, we get a unique Grothendieck transformation
$$\epsilon: \bM(\m V/-) \to \bF(-)$$
satisfying  for the smooth morphism $f: X \to Y$ the normalization condition
$$\epsilon([X \xrightarrow {\op {id}_X} X]) = \jeden_f \:.$$
Finally, $\epsilon$ vanishes on the subgroup $\mathbb {BL}(\m V/  X \xrightarrow{f}  Y)$:
Consider a blow-up diagram
$$\begin{CD}
E @> i'>> Bl_{S}X' \\
@V q' VV  @VV q V \\
S @>> i > X' @>>h > X  @>> f > Y\:,
\end{CD}$$
with $h$ proper and $i$ a closed embedding such that $fh$ and $fhi$ are smooth.
Then $q: U':=Bl_{S}X'\backslash E \xrightarrow{\sim} X'\backslash S=:U$ so that $$(fhq)_*\jeden_{fhq}-(fhiq')_*\jeden_{fhiq'}=(fhq)_*1_{U'}=(fh)_*1_{U}
=(fh)_*\jeden_{fh}-(fhi)_*\jeden_{fhi}\:.$$
(b) The same argument works for Ernstr\"{o}m--Yokura's bivariant theory $\tilde{\bF}( - )$,
once we know $ \jeden_f:=\jeden_X  \in \tilde{\bF}(X  \xrightarrow{f }  Y)$ for a smooth morphism
$f: X\to Y$. Consider a fiber square 
$$\begin{CD}
X''@> h'>> X' @> g'>> X \\
@V f'' VV  @V f' VV  @VV f V \\
 Y'' @>> h >  Y' @>> g >  Y \:,
\end{CD}$$
with $h$ and therefore also $h'$ flat.
Then the following diagram is commutative by the \emph{Verdier Riemann-Roch theorem}
for the smooth morphism $f'$ (see \cite{Yokura-VRR}, as well as 
\cite[\S 10.4, p.111]{Fulton-MacPherson} and the proof of \cite[Corollary 2.1 (4)]{BSY1}): 
\begin{equation}\label{VRR-c}
\begin{CD}
F(Y') @> (g^*\jeden_f)\bullet = f'^* >> F(X') \\
@V c_* VV  @VV c_* V \\
CH_*(Y') @>> c(T_{f'})\cap f'^* > CH_*(X') \:.
\end{CD}
\end{equation}
So $\alpha:=\jeden_f$ satisfies the condition ($\tilde{\bF}-1$) of \cite{EY1}
with $c_g(\jeden_f)=c(T_{f'})\cap f'^*$.
But it also satisfies the condition ($\tilde{\bF}-2$) of \cite{EY1}, since
$c(T_{f'})\cap$ commutes with  flat pullback (by \cite[Theorem 3.2(d)]{Fulton-book}) so that
$$h'^*\circ c_g(\jeden_f)=h'^*(c(T_{f'})\cap f'^*)=c(T_{f''})\cap (f''^*\circ h^*)
=c_{g\circ h}(\jeden_f)\circ h^*\:.$$
And this implies $ \jeden_f  \in \tilde{\bF}(X  \xrightarrow{f }  Y)$, together with 
commutativity of the diagram in (ii) by
the following ``strong normalization condition'' for the smooth morphism $f: X\to Y$, which by the definition of the right hand side given in \cite[p.325--326]{Fulton-book} is equivalent to
$c_g(\jeden_f)=c(T_{f'})\cap f'^*$ for all base changes $g$:
\begin{equation}\label{nor-gamma}
\gamma( \jeden_f )= c(T_{f})\bullet [f] \in CH(X  \xrightarrow{f }  Y)\:.
\end{equation}
\end{proof}

\begin{rem}\label{strong-todd} In the same way as above one can get the 
\emph{Riemann--Roch formula} 
$$\tau(\Cal O_f)= td(T_f) \bullet [f]$$
for a smooth (or local complete intersection) morphism $f: X\to Y$ and the bivariant
Riemann-Roch transformation $\tau: \bK_{alg}(-)\to CH(-)\otimes \bQ$ from
\cite[Example 18.3.16]{Fulton-book}. By the definition of $\tau$, the associated covariant transformation $\tau_*$ agrees with the Todd class transformation
$$\tau_*=td_*: G_0(X')\to CH^{-*}(X'\to pt)\otimes \bQ\simeq CH_{*}(X')\otimes \bQ \:,$$
with  the last isomorphism given by  \cite[Proposition 17.3.1]{Fulton-book}.
Since $\tau$ commutes with the bivariant product $\bullet$, one gets for a base change $g$ as above by the \emph{Verdier Riemann-Roch theorem}
\cite[Theorem 18.2] {Fulton-book} a commutative diagram
 \begin{equation}\label{VRR-td}
\begin{CD}
G_0(Y') @> (g^*\Cal O_{f'} )\bullet = f'^* >> G_0(X') \\
@V td_* VV  @VV td_* V \\
CH_*(Y')\otimes \bQ @>> td(T_{f'})\cap f'^* > CH_*(X')\otimes \bQ \:.
\end{CD}
\end{equation}
But $(td_*)\otimes \bQ$ is surjective (in fact even an isomorphism) by 
\cite[Corollary 18.3.2]{Fulton-book}, which implies $\tau_g(\Cal O_{f'})=td(T_{f'})\cap f'^*$
for any such base change $g$. And this is equivalent to the 
Riemann-Roch formula $\tau(\Cal O_f)= td(T_f) \bullet [f]$
by the definition of the right hand side given in \cite[p.325--326]{Fulton-book}.
\end{rem}

Let us finish this paper with the following problem:
We do not know if Brasselet's bivariant Chern class transformation $\ga: \bF(-) \to \bH(-)$ 
to Fulton-MacPherson's bivariant homology $\bH(-)$ (see \cite{Brasselet})
satisfies for a smooth morphism $f:X \to Y$ the ``strong normalization condition'' 
$$\ga(\jeden_f) = c(T_f) \bullet [f] \in \bH(X  \xrightarrow{f}  Y) $$
with $[f]$ the corresponding relative fundamental class.

If this is the case, then Corollary \ref{cor-Grothendieck2} (ii) would also be true for Brasselet's bivariant Chern class transformation $\ga: \bF(-) \to \bH(-)$.\\


\noindent 
{\bf Acknowledgements.} We would like to thank Paolo Aluffi, David Eisenbud, Shihoko Ishii, Toru Ohmoto and Takehiko Yasuda for valuable 
discussions and suggestions. We also would like to thank the referees for some useful comments and suggestions.\\



\begin{thebibliography}{99}

\bibitem{AKMW}
D. Abramovich, K. Karu, K. Matsuki and J. W\l odarczyk, {\it Torification and factorization of birational maps},
  J. Amer. Math. Soc. {\bf 15} (2002), 531--572.
\bibitem{BaSY}
M. Banagl, J. Sch\"urmann and S. Yokura, {\it Fiberwise bordism groups and their bivariant analogues},
in preparation. 
  
\bibitem{BFM}
P. Baum, W. Fulton and R. MacPherson, {\it Riemann--Roch for singular varieties},
Publ. Math. I.H.E.S. {\bf 45} (1975), 101--145.

\bibitem{BFM2}
P. Baum, W. Fulton and R. MacPherson, {\it Riemann--Roch and topological K-theory for singular varieties},
Acta Math. {\bf 143} (1979), 155-192. 

\bibitem {Bittner}
F. Bittner, {\it The universal Euler characteristic for varieties of characteristic zero},
Compositio Math.  {\bf 140} (2004), 1011--1032.

\bibitem{Brasselet} 
J.-P. Brasselet, {\it Existence des classes de Chern en th\'eorie bivariante}, Ast\'erisque, {\bf 101--102} (1981), 7--22.

\bibitem{BSY1} 
J.-P. Brasselet, J. Sch\"urmann and S. Yokura, {\it Hirzebruch classes and motivic Chern classes for singular spaces},  Journal of Topology and Analysis, {\bf 2}, No.1 (2010), 1--55.

\bibitem{BSY2} 
J.-P. Brasselet, J. Sch\"urmann and S. Yokura, {\it Classes de Hirzebruch et classes de Chern motivique}, C. R. Acad. Sci. Paris Ser. I,  {\bf 342} (2006), 325--328.

\bibitem{BSY3} 
J.-P. Brasselet, J. Sch\"urmann and S. Yokura, {\it On the uniqueness of bivariant Chern classes and bivariant Riemann--Roch transformations},  Advances in Math., {\bf 210} (2007), 797--812.

\bibitem{BSY4} 
J.-P. Brasselet, J. Sch\"urmann and S. Yokura, {\it On Grothendieck transformation in Fulton--MacPherson's bivariant theory},  J. Pure and Applied Algebra, {\bf 211} (2007), 665--684.

\bibitem{CS}
S. E. Cappell and J. L. Shaneson, {\it  Stratifiable maps and topological invariants},
   J. Amer. Math. Soc. {\bf 4} (1991), 521--551.

\bibitem {CF}
P.E. Conner and E.E. Floyd, {\it The relation of cobordism to $K$-theory}, Springer Lect. Notes
in Math., {\bf 28} (1966), Springer-Verlag, Berlin.
 
 \bibitem{EH}
D. Eisenbud and J. Harris, {\it  The Geometry of Schemes},
Graduate Texts in Math., Springer-Verlag, 2000.



\bibitem{EM2}
H. Emerson and R. Meyer, {\it Equivariant embedding theorems and topological index maps},
Adv. in Math. {\bf 225} (2010), 2840--2882.

\bibitem{EM3}
H. Emerson and R. Meyer, {\it Bivariant $K$-theory via correspondences},
Adv. in Math. {\bf 225} (2010), 2883--2919.
 
  \bibitem{EY1}
L. Ernstr\"{o}m and S. Yokura, {\it  Bivariant Chern-Schwartz-MacPherson classes with values in Chow groups},
Sel. Math., New ser. {\bf 8} (2002), 1--25.

  \bibitem{EY2}
L. Ernstr\"{o}m and S. Yokura, {\it Addendum to Bivariant Chern-Schwartz-MacPherson classes with values in Chow groups},
Sel. Math., New Ser. {\bf 10} (2004), 29--35.
   
\bibitem{Fulton-book} 
W. Fulton, {\it  Intersection Theory},
Springer Verlag, 1981.


\bibitem{Fulton-MacPherson}
W. Fulton and R. MacPherson, {\it Categorical frameworks for the study of singular spaces},
Memoirs of Amer. Math. Soc. {\bf 243}, 1981.


\bibitem{GK}
J.L. Gonz\'{a}lez and K. Karu, {\it Bivariant algebraic cobordism},
arXiv:1301.4210.

\bibitem{GM}
 M.  Goresky and R. MacPherson,  {\it Intersection homology theory},
  Topology {\bf 149} (1980), 155--162.

\bibitem{Gros}
  M. Gros, {\it Classes de Chern et classes de cycles en cohomologie de Hodge-Witt
   logarithmique}, 
   Bull. Soc. Math. France,  Memoire No. {\bf 21}, 1985.
   

\bibitem{Hironaka}
H. Hironaka, {\it Resolution of singularities of an algebraic variety over a field of characteristic zero}, Ann. of Math. {\bf 79} (1964), 109--326.


\bibitem{Hirzebruch}
F. Hirzebruch, {\it Topological Methods in Algebraic Geometry, 3rd ed. (1st German ed. 1956)},
Springer-Verlag, 1966.

\bibitem{HBJ}
F. Hirzebruch, T. Berger and R. Jung, {\it Manifolds and Modular Forms},
Vieweg, 1992.



\bibitem{Ken}
 G. Kennedy, {\it MacPherson's Chern classes of singular varieties},
 Com. Algebra.  {\bf 9} (1990) 2821--2839.



\bibitem{LM}
M. Levine and F. Morel, {\it Algebraic Cobordism},
Springer Monographs in Math., Springer-Verlag, 2007.

\bibitem{LP}
M. Levine and R.Pandharipande, {\it Algebraic Cobordism Revisited},
Inventiones Math., {\bf 176} (2009), 63--130.

\bibitem{Levy}
   R. Levy, {\it Riemann-Roch theorem for complex spaces},
  Acta Math. {\bf 158} (1987) 149--188. 

\bibitem{Levy2}
   R. Levy, {\it Riemann-Roch theorem for higher bivariant K-functors},
Ann. Inst. Fourier, Grenoble {\bf 58} (2008), 571--601.


\bibitem{MacPherson}
R. MacPherson, {\it Chern classes for singular algebraic varieties},
Ann. of Math. {\bf 100} (1974), 423--432.

\bibitem{MacPherson2}
 R. MacPherson, {\it Characteristic Classes for Singular Varieties},
  Proceedings of the 9-th Brazilian Mathematical Colloquium
 (Po\c cos de Caldas 1973) Voll.II,
 Instituto de Matem\'atica Pura e Aplicada, S\~ao Paulo, (1977), 321--327 .


\bibitem{Quillen}
D. Quillen, {\it Elementary Proofs of Some Results of Cobordism Theory Using Steenrod Operations},
Adv. in Math. {\bf 7} (1971), 29--56.


\bibitem{Sch-MSRI}
J. Sch\"urmann, {\it Characteristic classes of mixed Hodge modules},  in ``Topology of Stratified Spaces", MSRI Publications  Vol. {\bf 58}, Cambridge University Press (2010) , 419--470.


\bibitem{SY}
J. Sch\"urmann and S. Yokura, {\it A survey of characteristic classes of singular spaces}, 
in "Singularity Theory"  (Denis Cheniot et al, ed.), Dedicated to Jean-Paul Brasselet on his 60th birthday, Proceedings of the 2005 Marseille Singularity School and Conference, World Scientific (2007), 865--952.

\bibitem{SY2}
J. Sch\"urmann and S. Yokura, {\it Motivic bivariant characteristic classes and related topics},
J. Singularities {\bf 5}(2012),124--152. 

\bibitem{W}
  J. W\l odarczyk, {\it Toroidal varieties  and the weak factorization theorem},
   Inv. Math. {\bf 154} (2003), 223--331.

\bibitem{Woolf}
J. Woolf, {\it Witt groups of sheaves on topological spaces},
Comment.Math. Helv., {\bf 83} (2008), 289--326.

   
\bibitem{Yokura-TAMS}
 S. Yokura,  {\it On Cappell-Shaneson's homology L-class of singular algebraic varieties},
   Trans. Amer. Math. Soc. {\bf 347} (1995), 1005--1012. 
 
\bibitem{Yokura-VRR}
 S. Yokura, {\it On a Verdier-type Riemann-Roch for Chern-Schwartz-
   MacPherson class},
Topology and its Appl. {\bf 94} (1999), 315--327.
   
\bibitem{Yokura-obt} 
S. Yokura, {\it Oriented bivariant theory, I}, International J. Mathematics, Vol. {\bf 20}, No. 10 (2009), 1305--1334. 

\bibitem{Yokura-MSRI} 
S. Yokura, {\it Motivic characteristic classes}, in ``Topology of Stratified Spaces", MSRI Publications {\bf 58}, Cambridge Univ. Press (2010), 375--418.





\end{thebibliography}
\end{document}